\newtheorem{theorem}{Theorem}[section]
\newtheorem{lemma}[theorem]{Lemma}
\newtheorem{proposition}[theorem]{Proposition}
\newtheorem{remark}[theorem]{Remark}
\numberwithin{equation}{section}
\newcommand{\e}{\varepsilon}
\newcommand{\vphi}{\varphi}
\newcommand{\N}{\mathbb N}
\newcommand{\R}{\mathbb R}
\newcommand{\C}{\mathbb C}
\newcommand{\de}{\,\mathrm{d}}
\newcommand{\Res}{{\rm Res}}
\renewcommand{\Re}{\mathrm{Re}}
\newcommand{\point}{\bigl(\frac{y}{2}\bigr)}
\title{Self-similar gelling solutions for the coagulation equation\\with diagonal kernel}
\author{Marco Bonacini\thanks{\emailmarco} \and Barbara Niethammer\thanks{\emailbarbara} \and Juan J. L. Vel\'{a}zquez\thanks{\emailjuan}}
\date{\uniadd \\[3ex]\today}
\newcommand{\email}[1]{E-mail: \tt #1}
\newcommand{\emailmarco}{\email{bonacini@iam.uni-bonn.de}}
\newcommand{\emailbarbara}{\email{niethammer@iam.uni-bonn.de}}
\newcommand{\emailjuan}{\email{velazquez@iam.uni-bonn.de}}
\newcommand{\uniadd}{\emph{\small University of Bonn, Institute for Applied Mathematics\\ Endenicher Allee 60, 53115 Bonn, Germany}}
\begin{document}

\maketitle

\begin{abstract}
	We consider Smoluchowski's coagulation equation in the case of the diagonal kernel with homogeneity $\gamma>1$. In this case the phenomenon of gelation occurs and solutions lose mass at some finite time. The problem of the existence of self-similar solutions involves a free parameter $b$, and one expects that a physically
	relevant solution (i.e. nonnegative and with sufficiently fast decay at infinity) exists for a single value of $b$, depending on the homogeneity $\gamma$. 
	We prove this picture rigorously for large values of $\gamma$. In the general case, we discuss in detail the behaviour of solutions to the self-similar equation as the parameter $b$ changes.
\end{abstract}

\tableofcontents
\newpage


\section{Introduction} \label{sect:intro}

We consider Smoluchowski's coagulation equation \cite{Smo27}
\begin{equation} \label{eq:smol}
\partial_t f(\xi,t) = \frac12\int_0^{\xi} K(\xi-\eta,\eta)f(\xi-\eta,t)f(\eta,t)\de \eta - \int_0^\infty K(\xi,\eta)f(\xi,t)f(\eta,t)\de \eta,
\end{equation}
which describes binary clustering of particles of size $\xi$ and $\eta$ with a rate prescribed by the rate kernel $K(\xi,\eta)$. Formally, equation \eqref{eq:smol} preserves
the total mass in the system, that is $\int_0^{\infty} \xi f(\xi,t)\de \xi\equiv \mbox{const.}$. However, it is well-known that if the kernel $K$ is for example homogeneous 
and the degree of homogeneity is larger than one, the phenomenon of gelation occurs, that is the loss of mass at finite time. For models of polymers gelation has already been 
predicted in the classical Flory-Stockmayer theory (see e.g. \cite{Ziff80}) via statistical methods. For the discrete analogue of the rate equation \eqref{eq:smol} the first explicit
example of a gelling solution has been given by McLeod \cite{McLeod62a} for the product kernel $K(\xi,\eta)=\xi \eta$, while a solution after the gelation time has been
provided in \cite{LeyTschu81}. Rigorous proofs of the occurence of  gelation for a large range of kernels with  homogeneity  larger than one can be found 
 in \cite{Jeon98,EMP02}. It is in principle conjectured that when
the time approaches the gelation time the solution converges to a self-similar form \cite{Ley03}. However, not much is presently known  about the details of the gelation process apart from the 
case of the solvable product kernel. In this case, it is established in \cite{MP04} that there is a whole one-parameter family of self-similar solutions describing the gelation process.
One of them has exponential decay, the others decay like a power law, and which of them is selected depends on the decay behaviour of the initial data. The proof of the result relies 
on the solvability of the product kernel, that is on the fact that the equation is in  this case explicitly solvable using the Laplace transform. 
For kernels different from the product kernel, however, very little is known about self-similar gelling solutions. 
In \cite{BreFon14} perturbations of the product kernel are considered and the existence of self-similar  solutions is established. Positivity  of these solutions is however not rigorously shown, but a convincing formal argument of this property is given.

Our goal in this paper is to study the question of existence of self-similar gelling solutions  for another special kernel, the so called diagonal kernel $K(\xi,\eta)=\xi^{1+\gamma}\delta(\xi-\eta)$ with homogeneity $\gamma>1$ (here $\delta$ denotes the Dirac delta at the origin, which is homogeneous of degree $-1$). 
Using the identity $\delta(\xi-2\eta)=\frac12\delta(\eta-\frac{\xi}{2})$, the Smoluchowski  equation \eqref{eq:smol} in this case takes the form
\begin{equation} \label{smoldiag}
\partial_t f(\xi,t) = \frac14\Bigl(\frac{\xi}{2}\Bigr)^{\gamma+1}\bigl(f(\xi/2,t)\bigr)^2 - \xi^{\gamma+1} \bigl(f(\xi,t)\bigr)^2\,.
\end{equation}


\subsection{Self-similar solutions}
Self-similar solutions to \eqref{smoldiag} for homogeneities $\gamma>1$ have the form
\begin{equation} \label{ssansatz}
	f(\xi,t) = (T-t)^{ab}F((T-t)^b\xi), \qquad x=(T-t)^b\xi
\end{equation}
depending on two real parameters $a$, $b$, where $T$ is the blow-up time. By plugging this ansatz into the equation one finds that the parameters $a$, $b$ are related by the condition
\begin{equation} \label{parameters}
	b=\frac{1}{1+\gamma-a}\,,
\end{equation}
and the equation for the self-similar profile $F$ is
\begin{equation} \label{ssequation}
	-abF(x) -b x F'(x) = \frac14 \Bigl(\frac{x}{2}\Bigr)^{\gamma+1}\bigl(F(x/2)\bigr)^2 - x^{\gamma+1}\bigl(F(x)\bigr)^2\,.
\end{equation}
Notice that, in order to have that the mean cluster size $s(t)=(T-t)^{-b}$ tends to infinity as $t$ approaches the gelation time, we impose the constraint $b>0$ (or equivalently $a< 1+\gamma$).

The problem has a free parameter $a$ (or $b$), which gives the power-law behaviour of the self-similar profile at the origin. Indeed, if we require that at the gelpoint $t=T$ the self-similar solution $f(x,T)$ is finite and nonvanishing, then this can only happen if
\begin{equation} \label{ssdecay0}
	F(x)\sim c_0 x^{-a} \qquad\text{as }x\to0^+
\end{equation}
(in the sense $\lim_{x\to0^+}x^{a}F(x)=c_0\in(0,\infty)$). The multiplicative constant $c_0$ can be normalized by rescaling the solution, so that we consider without loss of generality $c_0=1$.
At the gelation time the self-similar solution $f$ has the power-law behaviour
\begin{equation}
	f(\xi,T^-) \sim \lim_{t\to T^-}(T-t)^{ab} \bigl((T-t)^b\xi\bigr)^{-a} = \xi^{-a}\,.
\end{equation}


\subsection{The shooting problem} \label{subsect:shooting}
For the analysis of \eqref{ssequation} it is convenient to rescale the self-similar profile $F$: setting
\begin{equation} \label{variables}
	\Phi(x)=x^{\gamma+1}F(x)\,,
\end{equation}
we obtain from \eqref{ssequation} and \eqref{ssdecay0} that $\Phi$ solves
\begin{equation} \label{equation}
	\begin{cases}
	bx\Phi'(x) = \Phi(x) - 2^{\gamma-1}\bigl(\Phi(x/2)\bigr)^2 + \bigl(\Phi(x)\bigr)^2\,,\\
	\Phi(x) \sim x^{\frac{1}{b}} \qquad\text{as } x\to0^+
	\end{cases}
\end{equation}
(recall also \eqref{parameters}). It is conjectured in \cite{Ley05} that, for a given $\gamma>1$, there is a unique value of the free parameter $b$ such that \eqref{equation} has a physically relevant solution. By physically relevant we mean that the profile $F(x)$ is nonnegative and decays exponentially as $x\to\infty$; in this case a function with the self-similar form \eqref{ssansatz} describes a solution of \eqref{smoldiag} which decays exponentially for $\xi>>(T-t)^{-b}$ and behaves as a power law for $\xi<<(T-t)^{-b}$. This is the analogon of the gelling transition which takes place for the product kernel, see \cite{McLeod62a}.

While there is sound numerical work supporting this expectation, a rigorous proof of such statement seems far from being easily achievable. Leyvraz \cite{Ley05} suggested that a shooting argument could lead to the desired result. Indeed, \eqref{equation} has the explicit power-law solution
\begin{equation}\label{phi0}
	\Phi_0(x) = x^{\frac{1}{b_0}} \qquad\qquad\text{for}\quad b_0=\frac{2}{\gamma-1}
\end{equation}
(notice that, according to \eqref{ssansatz} and \eqref{parameters}, this solution corresponds to the stationary self-similar solution $f(x,t)=x^{-a_0}$, $a_0=\frac{\gamma+3}{2}$, to Smoluchowski's equation \eqref{smoldiag}). It has been observed numerically, and we present below a rigorous proof of this fact (see Proposition~\ref{prop:btob0}), that the solutions corresponding to values of $b>b_0$ sufficiently close to $b_0$ cross to negative values, and therefore are not physically relevant. Moreover, the constant function
\begin{equation}\label{phiinf}
\Phi_\infty(x)\equiv\frac{1}{2^{\gamma-1}-1}
\end{equation}
is also a solution to the first equation in \eqref{equation}, for every value of $b$; solutions to \eqref{equation} for large values of $b$ approach this constant value as $x\to\infty$ (Proposition~\ref{prop:btoinf}). We will not further investigate in this paper the features of these solutions for which $\Phi(x)$ approaches a constant as $x\to\infty$; we only observe that they correspond to solutions of \eqref{smoldiag} in the self-similar form \eqref{ssansatz} with the asymptotic behaviour
\begin{equation} \label{pw}
f(\xi,t)\sim\frac{\Phi_\infty}{(T-t)\xi^{\gamma+1}} \qquad\text{ as }\xi\to\infty.
\end{equation}
It is worth to remark that these solutions have time-dependent fat tails, and in this respect they have analogies with those constructed in \cite{BNV1,BNV2} for kernels with homogeneity $\gamma\leq1$.
It would be interesting to clarify if there are solutions of the full evolution problem \eqref{smoldiag} (not necessarily of self-similar form) with the asymptotics $f(\xi,t) \sim a(t)\xi^{-(\gamma+1)}$ as $\xi \to \infty$, and having also finite mass $\int_0^\infty \xi f(\xi,t)\de\xi<\infty$ for $0\leq t< T$. Notice that this finite mass condition is not satisfied by the self-similar solutions obtained in this paper and having the asymptotics \eqref{pw}, because they have a non-integrable singularity at $\xi=0$.

At the border between these two different behaviours it is expected that a unique value of the shooting parameter $\bar{b}\in(b_0,\infty)$ exists so that the corresponding solution has exponential decay and remains positive.

However, a deeper investigation shows that the structure of the equation is more complex. Indeed the stability analysis of the constant solution $\Phi_\infty$ (see Section~\ref{subsect:stab}) reveals that there is a critical value $b_*$ of the parameter $b$ such that $\Phi_\infty$ is stable only for $b>b_*$, and unstable otherwise. Solutions to \eqref{equation} for $b<b_*$ develop oscillations, which have already been numerically observed. We will investigate the features of these oscillating solutions in Section~\ref{sect:oscill}. We did not explore systematically the region $b<b_*$. It looks feasible to prove the existence of periodic solutions for $b$ in a neighbourhood of $b_*$ by a Hopf bifurcation argument. For smaller values of $b$, it might be a priori possible to have secondary Hopf bifurcations and very complicated oscillatory dynamics. It would be worth to get a deeper insight about this question by means of numerical simulations.

The value of $b_*$ can be determined explicitly and one finds $b_0<b_*$ for every $\gamma$. We expect therefore that the critical parameter $\bar{b}$ is to be found in the interval $(b_0,b_*)$, and that it separates two unphysical behaviours of the solutions: on the one hand, for $b<\bar{b}$, solutions which become negative; on the other, for $b>\bar{b}$, oscillating solutions with $\liminf_{x\to\infty}\Phi(x)>0$. A standard shooting argument seems however to be difficult to perform due to the instability of this second behaviour.

\medskip
In the limit regime $\gamma\to\infty$ we can actually provide a rigorous proof of the previous picture. Indeed, by rescaling the solution as
\begin{equation*}
\phi(x):= \sigma x^{-\frac{1}{b}}\Phi\Bigl(\frac{x}{\sigma^b}\Bigr)\,, \qquad\qquad\text{with }\sigma=2^{\gamma-1-\frac{2}{b}},
\end{equation*}
one finds that the equation solved by $\phi$ is
\begin{equation*}
\begin{cases}
bx\phi'(x)= -x^{\frac1b}\bigl(\phi(x/2)\bigr)^2 + \sigma^{-1}x^\frac1b\bigl(\phi(x)\bigr)^2,\\
\phi(0)=1.
\end{cases}
\end{equation*}
In the limit $\gamma\to\infty$ one has $\sigma\to\infty$ and the second term on the right-hand side is negligible; therefore we can consider the approximation of the equation
\begin{equation*}
bx\phi'(x) = -x^{\frac1b}\bigl(\phi(x/2)\bigr)^2\,.
\end{equation*}
It is immediately checked that the exponential $\phi(x)=e^{-x}$ is an explicit solution for the critical value $b=1$. By linearizing around this solution, it is possible to prove rigorously via the Implicit Function Theorem that for every sufficiently large $\gamma$ there is a unique value $\bar{b}$ in a neighborhood of $b=1$ such that the corresponding solution to \eqref{equation} is positive and has exponential decay at infinity. The proof will be given in Section~\ref{sect:gammatoinf}.

\medskip
We summarize the above discussion and the results of the paper with a cartoon of the phase diagram of the behaviour of solutions to \eqref{equation} depending on the values of the homogeneity $\gamma>1$ and of the free parameter $b$, see Figure~\ref{fig:summary}. On the line corresponding to the value $b_0=\frac{2}{\gamma-1}$ the equation has the explicit solution $\Phi_0$, see \eqref{phi0}. According to the previous discussion, we are interested only in values of $b>b_0$, that is in the region below the line in the picture. A second value $b_1=b_1(\gamma)$ of the parameter is found so that for $b\in(b_0,b_1)$ the solution becomes negative, see Section~\ref{sect:btob0}. A third line $b=b_*(\gamma)$ delimits the region where the constant solution $\Phi_\infty$, see \eqref{phiinf}, is stable; this is discussed in Section~\ref{sect:oscill}. The three lines are ordered, as for every value of $\gamma$ one has $b_0<b_1<b_*$. The asymptotic analysis in Section~\ref{sect:gammato1} provides qualitative information about the solutions for values of $\gamma$ and $\frac1b$ in a neighborhood of the point $(1,0)$. Finally, in the regime $\gamma\to\infty$ we have a rigorous proof of the existence of a critical parameter $\bar{b}=\bar{b}(\gamma)$, with $\bar{b}(\gamma)\to1$ as $\gamma\to\infty$, for which the solution is positive and exponentially decaying.
\begin{figure}
	\centering
	\includegraphics[scale=1.5]{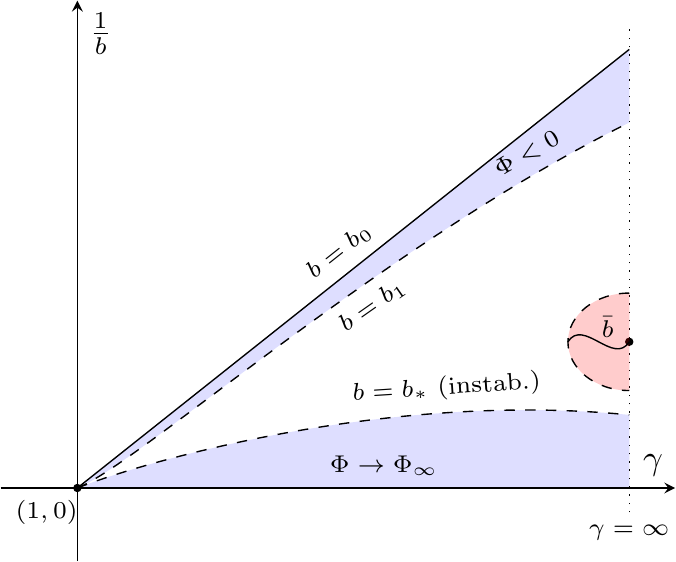}
	\caption{Cartoon of the behaviour of solutions to \eqref{equation} depending on the values of $\gamma$ and $b$.}
	\label{fig:summary}
\end{figure}


\section{Well-posedness and change of sign} \label{sect:btob0}

It will be often convenient to work in another set of variables: we set
\begin{equation} \label{variables2}
	\Phi(x) = y H(y), \qquad y=x^\frac{1}{b}\,,
\end{equation}
with the profile $H$ solving
\begin{equation} \label{equation2}
	\begin{cases}
	H'(y) = -\sigma\bigl(H(2^{-\frac1b}y)\bigr)^2 + \bigl(H(y)\bigr)^2\,, & \qquad\qquad \sigma=2^{\gamma-1-\frac{2}{b}}, \\
	H(0)=1.
	\end{cases}
\end{equation}
One of the advantages of working with these variables is that the solution $H$ turns out to be monotone and analytic in a neighborhood of the origin, see Lemma~\ref{lem:wellpos}. Notice that the two solutions $\Phi_0$ and $\Phi_\infty$, defined in \eqref{phi0} and \eqref{phiinf} respectively, are transformed into
\begin{equation*}
H_0(y) \equiv 1 \qquad\text{ and }\qquad H_\infty(y)=\frac{1}{(2^{\gamma-1}-1)y}\,.
\end{equation*}
The two generic, unphysical behaviours which are expected for solutions to \eqref{equation}, namely $\Phi$ changing sign and $\liminf_{x\to\infty}\Phi(x)>0$, correspond in these variables to $H$ changing sign and to a power-law decay $H(y)\sim\frac{1}{y}$ as $y\to\infty$.

As a preliminary step in the analysis we show the well-posedness of the equation. This is also observed in \cite{Ley05}, and we reproduce here the argument for the reader's convenience.

\begin{lemma}[Well-posedness and monotonicity] \label{lem:wellpos}
	The equation \eqref{equation2} has a unique analytic local solution in a neighborhood of $y=0$. Moreover, assuming that $b>b_0=\frac{2}{\gamma-1}$, the solution remains well-defined and monotonically decreasing as long as $H(y)>0$, and satisfies the bound
	\begin{equation} \label{eq:wellpos}
	H(y)\leq \frac{1}{1+(\sigma-1)y}\,.
	\end{equation}
\end{lemma}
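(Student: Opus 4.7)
The plan is to construct the analytic local solution at $y=0$ by a direct power series ansatz, then to prove monotonicity via a first-crossing contradiction, and finally to read off the bound \eqref{eq:wellpos} from a Bernoulli-type differential inequality for $1/H$. The key algebraic fact that drives everything is that the hypothesis $b>b_0$ is equivalent to $\sigma>1$, which in turn forces $H'(0)=1-\sigma<0$.

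First I would insert $H(y)=\sum_{n\geq 0}a_n y^n$, $a_0=1$, into \eqref{equation2}. Because the rescaling $y\mapsto 2^{-1/b}y$ multiplies the coefficient of $y^n$ by $2^{-n/b}$, the Cauchy product formula gives the recursion
\begin{equation*}
(n+1)a_{n+1}=\bigl(1-\sigma 2^{-n/b}\bigr)\sum_{k=0}^{n}a_k a_{n-k},\qquad a_0=1,
\end{equation*}
which determines all coefficients uniquely. To obtain a positive radius of convergence I would apply a majorant argument: since $|1-\sigma 2^{-n/b}|\leq \sigma$ (as $\sigma>1$), the moduli $|a_n|$ are dominated by the sequence $c_n$ defined by the positive recursion $(n+1)c_{n+1}=\sigma\sum_{k=0}^n c_k c_{n-k}$, $c_0=1$, whose generating series $C(y)$ solves $C'=\sigma C^2$, $C(0)=1$, and therefore equals $1/(1-\sigma y)$. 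Hence $H$ is analytic on $[0,1/\sigma)$; uniqueness is immediate from the recursion. The solution is then extended, as long as it stays positive and finite, by successively solving on intervals $[2^{k/b}r,2^{(k+1)/b}r]$, $k\geq 0$, where \eqref{equation2} reduces to a standard ODE in $H(y)$ with a known source term $-\sigma (H(2^{-1/b}y))^2$ supplied by the previously constructed piece of $H$. Since on the positivity interval $H\leq 1$ the right-hand side is uniformly bounded, the solution extends as long as $H$ stays positive.

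Next I would establish monotonicity. Since $H'(0)=1-\sigma<0$, $H$ is initially strictly decreasing. Suppose by contradiction there exists a smallest $y_*>0$ with $H'(y_*)=0$ while $H(y)>0$ on $[0,y_*]$. Since $H$ is decreasing on $[0,y_*]$ and $2^{-1/b}y_*<y_*$, one has $H(2^{-1/b}y_*)\geq H(y_*)>0$, whence
\begin{equation*}
0=H'(y_*)=-\sigma\bigl(H(2^{-1/b}y_*)\bigr)^2+\bigl(H(y_*)\bigr)^2\leq -(\sigma-1)\bigl(H(y_*)\bigr)^2<0,
\end{equation*}
a contradiction. Hence $H'<0$ throughout the positivity interval.

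Finally, with monotonicity in hand the same comparison gives the pointwise differential inequality $H'(y)\leq -(\sigma-1)(H(y))^2$; dividing by $-H^2$ yields $(1/H)'(y)\geq \sigma-1$, and integrating from $0$ with $1/H(0)=1$ produces \eqref{eq:wellpos}. The only mildly delicate step I anticipate is the bookkeeping in the continuation argument, since \eqref{equation2} is of pantograph type; but the fact that the delay $2^{-1/b}y$ is always strictly smaller than $y$ makes the step-by-step extension on intervals expanding geometrically by a factor $2^{1/b}$ entirely standard.
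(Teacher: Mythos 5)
Your proposal is correct and follows essentially the same route as the paper: a power-series ansatz with the same recursion for local analytic existence (the paper uses the induction $|a_n|\leq c^n$ with $c=\sup_n|1-\sigma 2^{-n/b}|$ rather than your majorant ODE $C'=\sigma C^2$, but these are equivalent in spirit), the identical first-crossing contradiction for monotonicity, the continuation via treating the pantograph equation as an ODE with known lag term, and the same Bernoulli-type inequality $(1/H)'\geq\sigma-1$ for the bound \eqref{eq:wellpos}. No gaps.
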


\begin{proof}
	In order to prove local existence in a neighborhood of the origin, one writes $H$ as a power series around $y=0$, $H(y) = \sum_{n=0}^\infty a_n y^n$, with $a_0=1$, and easily determines a recursive formula for the $a_n$'s:
	\begin{equation*}
	a_{n+1} = \frac{1}{n+1} \Bigl(1-\frac{\sigma}{2^{\frac{n}{b}}}\Bigr)\sum_{k=0}^na_ka_{n-k}\,, \qquad n=0,1,\ldots
	\end{equation*}
	As the series converges in a finite interval (to see this, one can for instance prove by induction that $|a_n|\leq c^n$, where $c=\sup_n|1-\sigma2^{-n/b}|$), we obtain an analytic solution to \eqref{equation2} in an interval $[0,y_0]$ for some $y_0>0$.

	We now observe that the solution $H$ is monotonically decreasing as long as it exists and remains positive. Indeed, we have $H'(0)=-(\sigma-1)<0$, and therefore $H$ is initially decreasing. Moreover, if $\bar{y}$ is the first point at which $H'(\bar{y})=0$ and $H$ is positive at $\bar{y}$, then $H$ is monotonically decreasing for $y<\bar{y}$ and by using the equation
	$$
	0= H'(\bar{y}) \leq -(\sigma-1)\bigl(H(2^{-\frac1b}\bar{y})\bigr)^2 <0\,,
	$$
	which is a contradiction.
	
	Once we have local existence in an interval $[0,y_0]$ near the origin, the equation \eqref{equation2} can be regarded as an ordinary differential equation and a recursive application of standard existence and uniqueness results give that the solution can be uniquely continued, unless it becomes negative. Indeed the monotonicity implies that the solution remains bounded as long as it stays positive.

	Finally, to prove the bound in the statement it is sufficient to observe that by monotonicity we have
	\begin{align*}
	H'(y) &= - (\sigma-1)\bigl(H(2^{-\frac{1}{b}}y)\bigr)^2 + \bigl(H(y)\bigr)^2 - \bigl(H(2^{-\frac{1}{b}}y)\bigr)^2 \\
	& \leq - (\sigma-1)\bigl(H(2^{-\frac{1}{b}}y)\bigr)^2 \leq - (\sigma-1)\bigl(H(y)\bigr)^2\,,
	\end{align*}
	that is, $(\frac{1}{H(y)})' \geq (\sigma-1)$. Therefore \eqref{eq:wellpos} follows by integration.
\end{proof}

We show in the following proposition that for values of $b$ larger but close to $b_0=\frac{2}{\gamma-1}$ the solution crosses to negative values and is therefore not a physical solution.

\begin{proposition}[Change of sign for $b\sim b_0$] \label{prop:btob0}
	There exists $b_1>b_0$, depending on $\gamma$, such that for every $b\in(b_0,b_1)$ the solution $H$ to \eqref{equation2} becomes negative.
\end{proposition}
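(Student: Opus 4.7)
The plan is to argue by contradiction: if $H>0$ on all of $[0,\infty)$, then $\int_0^\infty H^2\de s$ would be forced to take a specific finite value, whereas continuous dependence at $b=b_0$ makes the partial integrals of $H^2$ on long intervals arbitrarily large.

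The key step is to integrate \eqref{equation2} on $[0,y]$ and perform the change of variables $u=\alpha s$ (with $\alpha:=2^{-1/b}$) in the delayed term, which yields the identity
\[
H(y) = 1 + I(y) - \frac{\sigma}{\alpha}I(\alpha y),\qquad I(y):=\int_0^y H(s)^2\de s.
\]
If $H>0$ globally, Lemma~\ref{lem:wellpos} gives that $H$ is monotone decreasing with $H\le 1/(1+(\sigma-1)y)$; in particular $H\in L^2(0,\infty)$ and $H(y)\to L$ for some $L\ge 0$. The value $L=0$ is forced by the equation itself (a positive limit would give $H'\to -(\sigma-1)L^2<0$, incompatible with $H\ge 0$), and letting $y\to\infty$ in the identity pins down
\[
I_\infty=\frac{\alpha}{\sigma-\alpha},
\]
which is decreasing in $b$ and tends to the finite constant $\alpha_0/(1-\alpha_0)$, with $\alpha_0:=2^{-(\gamma-1)/2}$, as $b\to b_0^+$.

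To produce the contradiction I would invoke continuous dependence on $b$. At $b=b_0$ one has $\sigma=1$ and the unique solution $H_{b_0}\equiv 1$, so a Gronwall estimate on $\psi_b:=H_b-1$, with a bootstrap ensuring $|\psi_b|\le 1/2$ (so that the quadratic terms in the equation for $\psi_b$ remain controlled), yields that for every fixed $L>0$ there is $\delta(L)>0$ such that $H_b\ge 1/2$ on $[0,L]$ for every $b\in(b_0,b_0+\delta(L))$. Choosing $L$ so large that $L/4>\alpha_0/(1-\alpha_0)$ and then $b$ sufficiently close to $b_0$, one obtains $I_b(L)\ge L/4>I_\infty$, contradicting the monotonicity of $I_b$. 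Hence $H_b$ must vanish at some finite $y_0$, and since $H_b(\alpha y_0)>0$ (as $\alpha y_0<y_0$), the equation gives $H_b'(y_0)=-\sigma H_b(\alpha y_0)^2<0$, so $H_b$ crosses strictly into negative values past $y_0$.

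The only nonroutine point is the continuous-dependence estimate, which must accommodate the rescaled argument $\alpha y$ on the right-hand side; this is handled cleanly by applying Gronwall to the nondecreasing envelope $\rho(y):=\sup_{s\in[0,y]}|\psi_b(s)|$, the constants depending only on $\gamma$ and on the size of the chosen neighborhood of $b_0$.
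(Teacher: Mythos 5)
Your proposal is correct, and it takes a genuinely different route from the paper's argument. The paper linearizes around $H\equiv 1$, writes $H=1+M$ and $M=-2\e L+S$ with $L$ solving an explicitly solvable linearized delay equation, invokes a maximum principle and a careful bootstrap to show $|M|\lesssim \e e^{2y}$ up to the point $y_*$ where $\e e^{2y_*}=\delta$, establishes $H(y_*)\leq 1-\delta_*$, and then shows $H'\leq -\delta_*/2$ on a subsequent interval of length $2/\delta_*$, forcing a change of sign. Your argument is more structural: integrating \eqref{equation2} once and rescaling the delayed integral yields the exact identity $H(y)=1+I(y)-\frac{\sigma}{\alpha}I(\alpha y)$ with $I(y)=\int_0^y H^2$ and $\alpha=2^{-1/b}$, and the $L^2$-bound together with $H(y)\to 0$ (which you correctly force from monotonicity plus the equation) pins $\int_0^\infty H^2=\alpha/(\sigma-\alpha)$, a quantity that stays bounded as $b\to b_0^+$ since $\sigma-\alpha\to 1-2^{-(\gamma-1)/2}>0$. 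Meanwhile continuous dependence at $b_0$ (where $\sigma=1$, $H\equiv1$), implemented via Gronwall on the running supremum $\rho(y)=\sup_{s\le y}|H_b(s)-1|$ with the bootstrap $\rho\le 1/2$ (needed to keep the solution alive via Lemma~\ref{lem:wellpos} and to control the quadratic terms), makes $\int_0^L H_b^2\geq L/4$ for any prescribed $L$ once $b$ is close enough to $b_0$; choosing $L$ large enough contradicts the $\int_0^\infty H^2$ identity, so $H_b$ must vanish at some $y_0$, and $H_b'(y_0)=-\sigma H_b(\alpha y_0)^2<0$ forces a strict sign change. The key steps all check: the $u=\alpha s$ substitution is right, $H\in L^2$ follows from \eqref{eq:wellpos}, and $\sigma>\alpha$ whenever $b>1/(\gamma-1)$, which includes the range $b>b_0$. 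The identity you found is a clean conserved quantity the paper does not exploit; your approach is shorter and arguably more conceptual, while the paper's computation gives a sharper quantitative handle on where and how fast $H$ crosses zero, which is not needed for this statement.
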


\begin{proof}
Since $b\to b_0^+$ corresponds to $\sigma\to1^+$, it is convenient to introduce a small parameter $\e>0$ by setting $\sigma-1=2\e$ (so that also $b$ depends on $\e$). The lemma will be proved by showing that $H$ has to change sign, provided $\e$ is small enough.
We linearize around $H(0)=1$: we set $H(y)=1+M(y)$, with $M(0)=0$ and
\begin{equation} \label{btob01}
M'(y) = -2\e + 2M(y) -2\sigma M(2^{-\frac{1}{b}}y) - \sigma\bigl(M (2^{-\frac{1}{b}}y)\bigr)^2 + \bigl(M(y)\bigr)^2\,.
\end{equation}
We now divide the proof into four steps.

\medskip\noindent
\textit{Step 1.}
We first consider the linearized equation
\begin{equation} \label{btob02}
L'(y) = 1 + 2L(y) -2L(2^{-\frac1b}y)\,,\qquad L(0)=0\,.
\end{equation}
By writing $L$ in power series, $L(y) = \sum_{n=1}^\infty a_n y^n$, it is straightforward to determine a recursive relation for the coefficients $a_n$ which yields the following expression for the solution $L$, defined for all $y$,
\begin{equation*}
L(y) = \sum_{n=1}^\infty \Bigl(\frac{2^{n-1}}{n!}\Bigr)c_n(b)y^n\,, \qquad c_1(b)=1,\quad c_n(b):=\prod_{k=1}^{n-1}(1-2^{-\frac{k}{b}})\,.
\end{equation*}
Notice that $(c_n(b))_n$ is a decreasing sequence with
\begin{equation*}
c_n(b) \to c_*(b) := \prod_{k=1}^\infty (1-2^{-\frac{k}{b}}) \in(0,1) \qquad\text{as }n\to\infty.
\end{equation*}
Therefore the solution to the linearized equation \eqref{btob02} satisfies the bounds
\begin{equation} \label{btob03}
\frac{c_*(b)}{2}(e^{2y}-1) \leq L(y) \leq \frac12 (e^{2y}-1)\,.
\end{equation}

\medskip\noindent
\textit{Step 2.}
We now go back to the solution $M$ to \eqref{btob01} and we write
\begin{equation} \label{btob04}
M(y) = -2\e L(y) + S(y)\,, \qquad S(0)=0\,,
\end{equation}
where $S(y)$ solves
\begin{align*} 
S'(y)
& = 2S(y) - 2 S(2^{-\frac1b}y) -4\e M(2^{-\frac1b}y) - \sigma\bigl(M(2^{-\frac1b}y)\bigr)^2 +\bigl(M(y)\bigr)^2 \\
& =: 2S(y) - 2 S(2^{-\frac1b}y) +R(y)\,.
\end{align*}
This equation satisfies a maximum principle: indeed, suppose that $\bar S$ solves
\begin{equation} \label{btob04bis}
\bar{S}'(y) = 2\bar S(y) - 2\bar S(2^{-\frac1b}y) + \bar R(y)\,,
\qquad \bar S(0)=0\,,
\end{equation}
with a positive source $\bar R > 0$. Then $\bar S$ is positive and increasing: indeed, at $y=0$ one has $S'(0)=0$ and $S''(0)>0$, therefore $\bar S(y)>\bar S(2^{-\frac1b}y)$ for $y$ sufficiently small; by plugging this inequality into the equation \eqref{btob04bis} one finds that the derivative remains positive for larger $y$. Hence $\bar S$ is increasing and $\bar S>0$. Moreover
\begin{equation*}
\bar{S}'(y) \leq 2\bar S(y) + \bar{R}(y)\,,
\end{equation*}
which yields, as $\bar S(0)=0$,
\begin{equation} \label{btob04ter}
\bar S(y) \leq \int_0^y e^{2(y-z)}\bar{R}(z)\de z\,.
\end{equation}

\medskip\noindent
\textit{Step 3.}
We now claim that we can find uniform constants $\delta>0$ and $K>1$ such that
\begin{equation} \label{btob05}
|M(y)| \leq \e K e^{2y}
\qquad\text{for every } y\in(0,y_*),
\end{equation}
where $y_*>0$ is the point such that $\e e^{2y_*}=\delta$.

We prove the previous estimate by a continuation argument. Suppose that \eqref{btob05} is valid for every $y\in(0,\bar{y})$, for some $\bar{y}<y_*$. Then for every $y\leq\bar{y}$
\begin{equation} \label{btob06}
\begin{split}
|R(y)|
& \leq 4\e |M(2^{-\frac1b}y)| + \sigma \bigl( M(2^{-\frac1b}y)\bigr)^2 + \bigl(M(y)\bigr)^2  \\
& \leq 4\e^2K e^{2^{1-1/b}y} + \sigma\e^2 K^2e^{2^{2-1/b}y} + \e^2 K^2 e^{4y}
\leq 10\e^2K^2e^{4y}\,.
\end{split}
\end{equation}
Define now $\bar R(y):= 10\e^2K^2e^{4y}$, and let $\bar S$ be the solution to \eqref{btob04bis}. Notice that, as $\bar R>0$, we have by \eqref{btob04ter}
\begin{equation*}
\bar{S}(y) \leq \int_0^y e^{2(y-z)}\bar R(z)\de z
= 10\e^2K^2e^{2y}  \int_0^{y} e^{2z}\de z
\leq 5\e^2K^2e^{4{y}}\,.
\end{equation*}
Since $|R(y)|\leq\bar{R}(y)$ for every $y\in(0,\bar{y})$ by \eqref{btob06}, the discussion in the second step implies that for every $y\leq\bar{y}$
\begin{equation} \label{btob06bis}
|S(y)| \leq \bar S(y) \leq 5\e^2K^2e^{4{y}}\,.
\end{equation}
By inserting this inequality (computed at $\bar{y}$) in \eqref{btob04}, and recalling also \eqref{btob03}, we finally obtain
\begin{align*}
|M(\bar{y})|
&\leq 2\e L(\bar y) + |S(\bar y)|
\leq \e(e^{2\bar{y}}-1) + 5\e^2K^2e^{4{\bar y}}
\leq \bigl( 1 + 5\delta K^2 \bigr) \e e^{2\bar{y}}\,,
\end{align*}
where we used the fact that $\bar{y}\leq y_*$ and the definition of $y_*$ in the last inequality. It is clear that we can choose any constant $K>2$ and a sufficiently small $\delta>0$ (depending only $K$) such that
\begin{equation} \label{btob07}
|M(\bar y)| \leq \frac{\e K}{2}e^{2\bar{y}}\,.
\end{equation}
For this choice of $K$ and $\delta$ we have therefore proved that, assuming that \eqref{btob05} is valid for every $y\in(0,\bar{y})$, for some $\bar{y}<y_*$, then \eqref{btob07} holds at $\bar{y}$. Since \eqref{btob05} is obviously true for $y$ small enough, as $M(0)=0$, a continuation argument yields that \eqref{btob05} actually holds in the full interval $(0,y_*)$, as claimed.

\medskip\noindent
\textit{Step 4.}
We are now in position to conclude the proof of the lemma. We have by \eqref{btob04}, \eqref{btob03}, \eqref{btob06bis}, and by definition of the point $y_*$,
\begin{align*}
H(y_*)
& = 1+M(y_*)
= 1 -2\e L(y_*) + S(y_*) \\
& \leq 1 - \e c_*(b) (e^{2y_*}-1) + 5\e^2K^2e^{4y_*} \\
& = 1 - \delta c_*(b) + \e c_*(b) + 5\delta^2K^2\,.
\end{align*}
Observing that the constant $c_*(b)$ is uniformly bounded away from 0 for $b$ in a neighborhood of $b_0$, by possibly choosing a smaller $\delta>0$ we obtain from this inequality that for every $\e>0$ sufficiently small one has
\begin{equation} \label{btob08}
H(y_*) \leq 1-\delta_*
\end{equation}
for some fixed $\delta_*\in(0,1)$. In turn, by the monotonicity proved in Lemma~\ref{lem:wellpos} we also have that $H(y)\leq  1-\delta_*$ for $y>y_*$, as long as $H$ remains positive.

Consider now any point $y\in(y_*,y_*+\frac{2}{\delta_*})$. For every such $y$ we have
\begin{equation*}
\frac{y}{2^{\frac1b}} \leq \frac{y_*}{2^{\frac1b}} + \frac{2}{2^{\frac1b}\delta_*}
= \frac{\ln(\delta/\e)}{2^{1+\frac1b}} + \frac{2}{2^{\frac1b}\delta_*} < \frac12\ln(\delta/\e)
\end{equation*}
provided $\e$ is small enough. Hence $2^{-\frac1b}y<y_*$, so that we can use the estimate \eqref{btob06bis} at the point $2^{-\frac1b}y$. This estimate, together with \eqref{btob04}, \eqref{btob03} and the definition of the point $y_*$, yields
\begin{align*}
\big|H(2^{-\frac1b}y)-1\big|
& = |M(2^{-\frac1b}y)|
\leq 2\e L(2^{-\frac1b}y) + |S(2^{-\frac1b}y)| \\
& \leq \e(e^{2^{1-1/b}y}-1) + 5\e^2K^2e^{2^{2-1/b}y} \\
& \leq \e e^{2^{1-1/b}y_*} \exp\Bigl(\frac{2^{2-1/b}}{\delta_*}\Bigr) + 5\e^2K^2e^{2^{2-1/b}y_*} \exp\Bigl(\frac{2^{3-1/b}}{\delta_*}\Bigr) \\
& = \e^{1-2^{-1/b}}\delta^{2^{-1/b}}\exp\Bigl(\frac{2^{2-1/b}}{\delta_*}\Bigr) + 5K^2 \e^{2(1-2^{-1/b})}\delta^{2^{1-1/b}}\exp\Bigl(\frac{2^{3-1/b}}{\delta_*}\Bigr)
\end{align*}
for every $y\in(y_*,y_*+\frac{2}{\delta_*})$. Therefore
\begin{equation} \label{btob09}
\big|H(2^{-\frac1b}y)-1\big| \leq \omega_{\delta,\delta_*}(\e) \qquad\text{for every }y\in\Bigl(y_*,y_*+\frac{2}{\delta_*}\Bigr)
\end{equation}
for some function $\omega_{\delta,\delta_*}(\e)\to0$ as $\e\to0^+$.

By combining \eqref{btob08} and \eqref{btob09} we have from \eqref{equation2}
\begin{align*}
H'(y) &= -(1+2\e)\bigl( H(2^{-\frac1b}y) \bigr)^2 + \bigl(H(y)\bigr)^2 \\
& \leq -(1+2\e)(1-\omega_{\delta,\delta_*}(\e))^2 + (1-\delta_*)^2 \\
& \leq -\delta_* + \tilde{\omega}(\e)\,,
\end{align*}
with $\tilde{\omega}(\e)\to0$ as $\e\to0^+$, for every $y\in(y_*,y_*+\frac{2}{\delta_*})$, provided that $H$ remains positive in that interval.
By choosing $\e$ small enough, we then have $\frac{\de H}{\de y}(y) \leq -\frac12\delta_*$ in the interval $(y_*,y_*+\frac{2}{\delta_*})$, and this together with \eqref{btob08} implies that $H$ has to become negative in this interval.
\end{proof}


\section{Stability and oscillating solutions} \label{sect:oscill}

We discuss in this section the stability of the constant solution $\Phi_\infty\equiv\frac{1}{2^{\gamma-1}-1}$ to the first equation in \eqref{equation} (corresponding to $H_\infty(y)=\frac{\Phi_\infty}{y}$ in the variables \eqref{variables2}). For this purpose it is convenient to introduce a third set of variables, by setting
\begin{equation} \label{variables3}
	\vphi(z) = yH(y) = \Phi(x), \qquad e^{z}= y = x^{\frac1b}, \qquad z=\ln y = \frac1b\ln x,
\end{equation}
with the profile $\vphi:\R\to\R$ solving the nonlinear delay equation
\begin{equation} \label{equation3}
	\begin{cases}
	\vphi'(z) = \vphi(z) -2^{\gamma-1}\bigl(\vphi(z-d)\bigr)^2 + \bigl(\vphi(z)\bigr)^2\,, & \qquad\qquad d=\frac{\ln2}{b}, \\
	\vphi(z) \sim e^z \qquad\text{as }z\to-\infty\,.
	\end{cases}
\end{equation}
We will first show in Proposition~\ref{prop:stability} that the constant $\Phi_\infty$ is asymptotically stable, provided that the free parameter $b$ is larger than a critical value $b_*$, which is explicitly determined in terms of the homogeneity $\gamma$. A consequence of this result is that, for large values of $b$, the solution $\vphi$ to \eqref{equation3} converges to the constant value $\Phi_\infty$ as $z\to\infty$, see Proposition~\ref{prop:btoinf}. Finally, in Section~\ref{subsect:oscill} we discuss oscillating solutions which appear numerically for intermediate values of $b$ and are related to the instability of $\Phi_\infty$. In particular we present an iteration argument describing in details the structure of such solutions in the limit case $\gamma=\infty$.


\subsection{Stability analysis} \label{subsect:stab}
We first study the asymptotic stability of the constant solution $\Phi_\infty$.

\begin{proposition}[Stability of the constant solution]\label{prop:stability}
	Assume that $b>b_*$, where $b_*=b_*(\gamma)$ is the critical value
	\begin{equation} \label{b*}
	b_* = \frac{2^\gamma\ln2 \sqrt{1-\bigl(\frac12+\frac{1}{2^\gamma}\bigr)^2}}{(2^{\gamma-1}-1)\arccos\bigl(\frac12+\frac{1}{2^\gamma}\bigr)}\,.
	\end{equation}
	Then the constant solution $\Phi_\infty=\frac{1}{2^{\gamma-1}-1}$ is asymptotically stable, in the following sense: there exists $\delta>0$ such that, given any $z_0\in\R$ and $\vphi_0\in C^0([z_0-d,z_0])$ with $\|\vphi_0-\Phi_\infty\|_{\infty}<\delta$, the initial value problem
	\begin{equation*}
	\begin{cases}
	\vphi'(z) = \vphi(z) -2^{\gamma-1}\bigl(\vphi(z-d)\bigr)^2 + \bigl(\vphi(z)\bigr)^2 & \text{for } z\geq z_0, \\
	\vphi(z) = \vphi_0(z) & \text{for }z\in[z_0-d,z_0]
	\end{cases}
	\end{equation*}
	has a unique, global solution which converges exponentially to $\Phi_\infty$ as $z\to\infty$.
\end{proposition}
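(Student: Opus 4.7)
The plan is to prove this via the classical route for linear(izable) delay equations: linearize around $\Phi_\infty$, show that the characteristic equation has all roots in the open left half-plane precisely when $b>b_*$, and then transfer linear exponential stability to nonlinear local stability by a variation-of-constants fixed point argument.

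First I would write $\vphi = \Phi_\infty + \psi$ and compute, using $(2^{\gamma-1}-1)\Phi_\infty = 1$, that $\psi$ satisfies
\begin{equation*}
\psi'(z) = A\psi(z) - B\psi(z-d) + \psi(z)^2 - 2^{\gamma-1}\psi(z-d)^2,
\end{equation*}
with $A = 1 + 2\Phi_\infty = \frac{2^{\gamma-1}+1}{2^{\gamma-1}-1}$ and $B = 2^\gamma \Phi_\infty = \frac{2^\gamma}{2^{\gamma-1}-1}$. Note the convenient identity $B - A = 1$. The characteristic equation for the linear part is
\begin{equation*}
\Delta(\lambda; d) := \lambda - A + B e^{-\lambda d} = 0.
\end{equation*}
At $d=0$ this reduces to $\lambda = A - B = -1$, so there are no roots with $\mathrm{Re}\,\lambda \geq 0$, and in fact the linearization is exponentially stable. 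As $d$ increases continuously from $0$, roots move continuously in $\mathbb{C}$ and, for a root to cross into the right half-plane, it must first traverse the imaginary axis.

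Next I would identify the critical delay $d_*$ by looking for purely imaginary roots $\lambda = i\omega$ with $\omega>0$. Separating real and imaginary parts of $i\omega = A - B(\cos\omega d - i \sin\omega d)$ gives $\cos(\omega d) = A/B = \frac{1}{2} + \frac{1}{2^\gamma}$ and $\omega = B\sin(\omega d) = B\sqrt{1-(A/B)^2}$. Choosing the smallest positive solution $\omega d \in (0,\pi)$ yields $\omega d_* = \arccos(\tfrac12 + 2^{-\gamma})$, and solving for $d_*$ and then $b_* = \ln 2/d_*$ reproduces the formula \eqref{b*}. A standard transversality computation, implicitly differentiating $\Delta = 0$ in $d$ at $\lambda = i\omega$, shows $\mathrm{Re}(d\lambda/dd) > 0$ there, so roots cross strictly from left to right at $d_*$. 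Combined with continuity of roots and the stability at $d=0$, this proves: for $d \in (0, d_*)$ (equivalently $b>b_*$) every root of $\Delta(\cdot; d)$ satisfies $\mathrm{Re}\,\lambda \leq -\mu$ for some $\mu = \mu(b,\gamma)>0$; here the uniform gap follows from the standard fact that, for a retarded equation of this form, the roots have real parts bounded above and only finitely many lie in any right half-plane $\{\mathrm{Re}\,\lambda \geq -\mu\}$.

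Having linear exponential stability, the nonlinear conclusion follows from the principle of linearized stability for retarded functional differential equations. Concretely, write the solution through the variation-of-constants formula in $C([-d,0])$: the linear semigroup $T(z)$ satisfies $\|T(z)\|_{C\to C} \leq Ce^{-\mu z/2}$ thanks to the spectral gap, and the remainder $N(\psi) = \psi(z)^2 - 2^{\gamma-1}\psi(z-d)^2$ is quadratic, hence $\|N(\psi)\|_\infty \leq C\|\psi\|_\infty^2$ on any small ball. A contraction mapping argument in the weighted space $\{\psi \in C([z_0-d,\infty)) : \sup_{z\geq z_0}e^{\mu z/4}\|\psi\|_\infty < \infty\}$ then yields, for initial data with $\|\vphi_0 - \Phi_\infty\|_\infty < \delta$ sufficiently small, a unique global solution converging exponentially to $\Phi_\infty$. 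The main technical obstacle is the spectral statement in the second paragraph: while identifying the first imaginary crossing is a direct calculation, ensuring that no root has slipped into the right half-plane via the essential spectrum at infinity requires invoking the standard asymptotic location of roots for equations of the form $\lambda = A - Be^{-\lambda d}$ (e.g.\ via Pontryagin's theorem or direct estimates on $|e^{-\lambda d}|$ for $|\lambda|$ large), which I would cite from the delay-equations literature rather than reproving.
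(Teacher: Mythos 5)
Your proof is correct and the overall structure matches the paper's up to the key spectral lemma. You linearize in the same way (your $A=\frac{\theta+1}{\theta-1}$ and $B=\frac{2\theta}{\theta-1}$, with $\theta=2^{\gamma-1}$, are exactly the paper's coefficients), arrive at the same characteristic equation \eqref{charequation}, and recover the same critical value $b_*$ from the purely imaginary crossing $\cos(\omega d)=A/B$, $\omega=B\sin(\omega d)$; your transversality computation $\Re(\partial\lambda/\partial d)>0$ is also correct. The passage from linear to nonlinear exponential stability is the same in substance: the paper invokes Driver's theorems on linearized stability for retarded functional differential equations rather than running the variation-of-constants contraction explicitly, but these are interchangeable. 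The one genuine divergence is in how the root-location statement (the paper's Lemma~\ref{lem:stability}) is proved. You use the homotopy-in-$d$ route — stability at $d=0$, continuity of roots, first imaginary crossing, transversality — and you correctly flag that this carries the extra burden of ruling out roots slipping into the right half-plane from infinity (a fact one proves from $|e^{-\lambda d}|\leq 1$ for $\Re\lambda\geq 0$ together with a Rouch\'e argument on the resulting bounded set of candidate roots, or cites from the DDE literature). The paper instead works at fixed $d$: after normalizing the characteristic function to $F_{\tilde d}(\lambda)=e^{-\tilde d\lambda}+\lambda-\tilde\sigma$ with $\tilde\sigma=A/B$, it applies the Argument Principle on a half-disk contour in $\{\Re z>0\}$ and shows directly that the image of the imaginary axis fails to wind around the origin precisely when $\tilde d<\arccos\tilde\sigma/\sqrt{1-\tilde\sigma^2}$. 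The winding-number computation is self-contained and avoids any homotopy or "roots at infinity" issue, while your crossing/transversality argument is the more familiar Hopf-style one; both yield the same $d_*$ and the same $b_*$ in \eqref{b*}.
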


\begin{proof}
If we plug the ansatz $\vphi(z)=\Phi_\infty + \phi(z)$ into the first equation of \eqref{equation3} we find
\begin{equation} \label{stab0}
\phi'(z) = \frac{\theta+1}{\theta-1}\phi(z) - \frac{2\theta}{\theta-1}\phi(z-d) - \theta\bigl(\phi(z-d)\bigr)^2 + \bigl(\phi(z)\bigr)^2\,,
\end{equation}
where we set $\theta=2^{\gamma-1}$ to lighten the notation. In terms of $\phi$, the goal is to show the asymptotic stability of the trivial solution to \eqref{stab0}.

The conclusion follows by an application of standard results in the theory of delay differential equations. It is indeed well known that the trivial solution to the linearized equation
\begin{equation*}
G'(z) = \frac{\theta+1}{\theta-1} G(z) - \frac{2\theta}{\theta-1}G(z-d)
\end{equation*}
is uniformly asymptotically stable if all the roots of the corresponding characteristic equation
\begin{equation} \label{charequation}
\lambda +\frac{2\theta}{\theta-1}e^{-d\lambda} -\frac{\theta+1}{\theta-1}=0
\end{equation}
have negative real part, see for instance \cite[Theorem~28B]{Dri77} and references therein. In view of Lemma~\ref{lem:stability} below, this is the case if $b>b_*$. Therefore in this case we are in the position to apply \cite[Theorem~34E and Corollary~34F]{Dri77} which yield the uniform asymptotic stability of the trivial solution to the nonlinear equation \eqref{stab0}.
\end{proof}

\begin{lemma}[Roots of the characteristic equation]\label{lem:stability}
	All the roots $\lambda\in\C$ of the characteristic equation \eqref{charequation} have negative real part if and only if $b>b_*$, where $b_*$ is defined in \eqref{b*}, and $d=\frac{\ln2}{b}$.
\end{lemma}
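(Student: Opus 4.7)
Write the characteristic equation as $\lambda+Ae^{-d\lambda}-B=0$ with $A:=\frac{2\theta}{\theta-1}$, $B:=\frac{\theta+1}{\theta-1}$ and $\theta:=2^{\gamma-1}$, and note that $B/A=\frac12+\frac{1}{2^\gamma}\in(\frac12,1)$. The plan is the classical continuation argument for delay equations: the roots depend continuously on $d$, and any root in the closed right half-plane must satisfy $|\lambda-B|\leq A$ (because $|Ae^{-d\lambda}|\leq A$ there), so the number of such roots is finite and can only change when a root crosses the imaginary axis. I will therefore first find all purely imaginary roots explicitly, check that the smallest critical $d$ matches $b_*$ through the relation $d=(\ln 2)/b$, and then use an argument-principle count together with a transversality computation to handle the two directions of the equivalence.

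Step one is the explicit determination of the imaginary roots. Substituting $\lambda=i\omega$ with $\omega>0$ and separating real and imaginary parts produces the two conditions $\cos(d\omega)=B/A$ and $\sin(d\omega)=\omega/A$; since $\omega/A>0$, writing $\alpha_0:=\arccos\bigl(\frac12+\frac{1}{2^\gamma}\bigr)\in(0,\pi/2)$ forces $d\omega=\alpha_0+2k\pi$ for some $k\geq 0$, and then $\omega$ is determined by $\omega=A\sin\alpha_0=:\omega_*$, independent of $k$. The smallest $d$ producing such a root is therefore
\begin{equation*}
d_*=\frac{\alpha_0}{\omega_*}=\frac{(\theta-1)\arccos\bigl(\frac12+\frac{1}{2^\gamma}\bigr)}{2\theta\sqrt{1-\bigl(\frac12+\frac{1}{2^\gamma}\bigr)^2}},
\end{equation*}
and a direct calculation shows that $(\ln 2)/d_*$ coincides with the expression \eqref{b*} for $b_*$.

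For the \emph{if} direction I would count right half-plane roots by applying the argument principle to $f_d(\lambda):=\lambda+Ae^{-d\lambda}-B$ on the boundary of a large semidisk $\{\Re\lambda\geq 0,\,|\lambda|\leq R\}$. At $d=0$ the only root of $f_0$ is $\lambda=-1$, so the count is zero; as $d$ increases the count can change only when a root crosses the imaginary axis, i.e.\ at $d\in\{d_*,d_2,\ldots\}$, so it is identically zero on the whole interval $(0,d_*)$, which corresponds to $b>b_*$. For the converse, at $d=d_*$ the purely imaginary pair $\pm i\omega_*$ already violates the condition; for $d>d_*$ I would invoke a standard transversality computation. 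Implicit differentiation of the characteristic equation gives $\frac{\de\lambda}{\de d}=\frac{A\lambda e^{-d\lambda}}{1-dAe^{-d\lambda}}$, and substituting $\lambda=i\omega$ together with the identity $Ae^{-id\omega}=B-i\omega$ (which holds at every imaginary crossing by the characteristic equation itself) yields, after a short algebraic manipulation,
\begin{equation*}
\Re\frac{\de\lambda}{\de d}\bigg|_{\lambda=i\omega}=\frac{\omega^2}{(1-dB)^2+d^2\omega^2}>0.
\end{equation*}
Thus every crossing moves a pair of roots from the open left into the open right half-plane, and no root ever returns, so for every $d>d_*$ (i.e.\ $b<b_*$) at least one pair of roots has strictly positive real part.

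The delicate point is the argument-principle step: for $d$ small the equation has infinitely many roots escaping to $-\infty$ with unbounded imaginary parts, and one has to justify that the semidisk radius $R$ can be chosen large enough, uniformly in $d\in[0,d_*]$, so that no faraway root can sneak into the right half-plane without being detected. The a priori confinement $|\lambda-B|\leq A$ of right half-plane roots already does most of the work, but one still needs a uniform lower bound for $|f_d|$ on a suitable sequence of circular arcs to ensure that the winding number is a well-defined continuous integer-valued function of $d$. This is the only step requiring care beyond routine computation; alternatively one could quote the classical stability results for scalar delay equations of the form $\lambda+Ae^{-d\lambda}+C=0$ found in \cite{Dri77}, which encode this analysis once and for all.
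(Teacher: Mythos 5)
Your proposal is correct, and it proves the lemma by a route that differs in organization from the paper's. The paper reduces to $e^{-\tilde d\tilde\lambda} + \tilde\lambda - \tilde\sigma = 0$ and then applies the Argument Principle directly: it parameterizes the image $\Sigma_1$ of the imaginary axis under $F_{\tilde d}$, observes that the imaginary part $t - \sin(\tilde d t)$ is monotone for $\tilde d\leq 1$ and develops a loop for $\tilde d>1$, and reads off the threshold $d_*=\arccos\tilde\sigma/\sqrt{1-\tilde\sigma^2}$ from the moment this loop first engulfs the origin — a statement it leaves as ``a simple exercise.'' You instead treat $d$ as a continuation parameter: you locate the imaginary crossings explicitly by solving $\cos(d\omega)=B/A$, $\sin(d\omega)=\omega/A$, which pins down a \emph{single} crossing frequency $\omega_*=A\sin\alpha_0$ for all critical values of $d$, and then settle the ``only if'' direction with the transversality computation $\Re\frac{\de\lambda}{\de d}\big|_{\lambda=i\omega}=\omega^2/\bigl((1-dB)^2+d^2\omega^2\bigr)>0$, which guarantees roots monotonically leave the left half-plane as $d$ grows. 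Your version is somewhat more self-contained and converts the paper's qualitative curve-drawing into an explicit algebraic argument; the paper's version, in exchange, gives a clean geometric picture (Figure~2) of what happens to the image curve at each secondary crossing. You are right that the one nontrivial technical point in a continuation argument is the uniform control of the Rouch\'e radius $R$ for $d$ ranging over a compact interval — your a priori confinement $|\lambda-B|\leq A$ of right half-plane roots does handle this, because it bounds all such roots in a $d$-independent disk, so any fixed $R>B+A$ works uniformly; stating this one sentence would close the gap you flagged.
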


\begin{proof}
By setting $\tilde \lambda:=\frac{\theta-1}{2\theta}\lambda$, $\tilde d:=\frac{2\theta}{\theta-1}d$, $\tilde{\sigma}:=\frac{\theta+1}{2\theta}$, the problem is equivalent to study for which values of $\tilde{A}$ all the roots $\tilde \lambda\in\C$ of the equation
\begin{equation*}
e^{-\tilde{d}\tilde\lambda} + \tilde\lambda - \tilde{\sigma} = 0
\end{equation*}
have negative real part. Notice that $\tilde{\sigma}=\frac12+\frac{1}{2^\gamma}\in(\frac12,1)$.

Define $F_{\tilde{d}}(\lambda):= e^{-\tilde{d}\lambda} + \lambda - \tilde{\sigma}$. We make use of the Argument Principle in order to study the roots of $F_{\tilde{d}}$ in the complex plane. We consider the closed curve $\Gamma$ in the complex plane surrounding an half disk of radius $R>>1$ in the half plane $\{\Re(z)>0\}$ with flat part on the imaginary axis: $\Gamma=\Gamma_1\cup\Gamma_2$, with
\begin{equation*}
\Gamma_1 = \{it : t\in[-R,R] \}\,,
\qquad
\Gamma_2 = \{Re^{-it} : t\in(-\pi/2,\pi/2)\}\,.
\end{equation*}
Let $\Sigma:=F_{\tilde{d}}(\Gamma)$ be the image of $\Gamma$ under the map $F_{\tilde{d}}$. We shall now count the number of turns of $\Sigma$ around the origin: if such number is zero, we can conclude that there are no roots of $F_{\tilde{d}}$ in the region enclosed by $\Gamma$ (and since $R$ is arbitrary there are no roots in the whole half plane $\{\Re(z)>0\}$).

\begin{figure}
	\centering
	\includegraphics{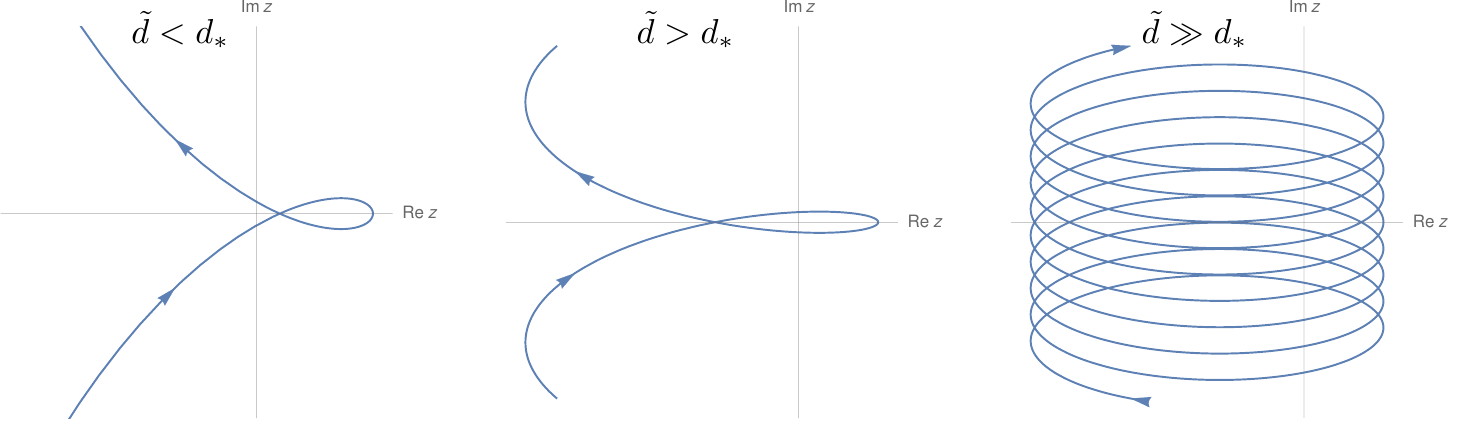}
	\caption{The curve $\Sigma_1$ in the proof of Lemma~\ref{lem:stability}, see \eqref{curvestab}, for different values of the parameter $\tilde{d}$. For $\tilde{d}$ smaller than the critical parameter $d_*$ the curve does not surround the origin (left panel); for $\tilde{d}>d_*$ the curve makes a loop around the origin and roots with positive real part appear (center); for large values of $\tilde{d}$ the number of turns around the origin increases, corresponding to more instability modes (right).}
	\label{fig:argument}
\end{figure}

It is easily seen that the path $F_{\tilde{d}}(\Gamma_2)$, for large values of $R$, is approximately a half-circle of radius $R$ centered at the point $-\tilde\sigma$ on the real axis. The curve $F_{\tilde{d}}(\Gamma_1)$ has instead the parametric expression
\begin{align} \label{curvestab}
\Sigma_1 = \{(-\tilde{\sigma}+\cos(\tilde{d}t)) + i (t - \sin(\tilde{d}t)) : t\in[-R,R]\}\,.
\end{align}
For $\tilde{d}\leq1$ the imaginary part is monotone increasing in $t$, and this clearly implies that the curve $\Sigma$ does not surround the origin. However, as $\tilde{d}$ increases past $1$, the curve starts to make a little loop, eventually surrounding the origin for $\tilde{d}$ larger than a critical value $d_*$ (see Figure~\ref{fig:argument}). It is a simple exercise to check that the curve $\Sigma_1$ does not make any turn around the origin if and only if
$$
\tilde{d} < \frac{\arccos\tilde{\sigma}}{\sin\arccos\tilde{\sigma}} = \frac{\arccos\tilde\sigma}{\sqrt{1-\tilde\sigma^2}}=:d_*\,.
$$
Recalling that $\tilde d=\frac{2\theta}{\theta-1}d$ and $d=\frac{\ln2}{b}$ it immediately follows that the previous condition is equivalent to $b>b_*$.
\end{proof}

\begin{remark} \label{rm:b*}
It is important to compare $b_*$ with the critical value $b_0=\frac{2}{\gamma-1}$: by introducing $\rho := 1-2^{-(\gamma-1)}\in(0,1)$, we have in terms of $\rho$
\begin{equation*}
\frac{b_*}{b_0}
= -\frac{\ln(1-\rho)}{\rho} \frac{\sqrt{\rho-\frac{\rho^2}{4}}}{\arccos\bigl(1-\frac{\rho}{2}\bigr)}
=: p(\rho)\,.
\end{equation*}
As $p(\rho)>1$ for all $\rho\in(0,1)$, the inequality $b_*>b_0$ holds for every value of $\gamma>1$. Notice also that in the limit $\gamma\to1^+$ the ratio $\frac{b_*}{b_0}$ converges asymptotically to 1, while in the regime of large homogeneity the two values $b_0$ and $b_*$ are separated: $b_0\to0$ and $b_*\to\frac{3\sqrt{3}\ln2}{\pi}$ as $\gamma\to\infty$.
\end{remark}


\subsection{Convergence to the constant solution} \label{subsect:btoinf}
The result in Proposition~\ref{prop:stability} allows us to show that the solution $\vphi$ to \eqref{equation3} converges to the constant value $\Phi_\infty$, for large values of the free parameter $b$ (that is, for small values of the delay $d$).

\begin{proposition}[Convergence to the constant solution for $b\to\infty$] \label{prop:btoinf}
	There exists $b_2>b_0$ such that for every $b>b_2$ the solution $\vphi$ to \eqref{equation3} is globally defined, positive, and satisfies
	\begin{equation*}
	\vphi(z) \to \Phi_\infty \qquad \text{as }z\to\infty.
	\end{equation*}
	Equivalently, for $b>b_2$ the solution $\Phi$ to \eqref{equation} is defined in $[0,\infty)$, positive, and satisfies $\lim_{x\to\infty}\Phi(x)=\Phi_\infty$.
\end{proposition}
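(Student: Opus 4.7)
The plan is to reduce the statement to the asymptotic stability result of Proposition~\ref{prop:stability}, by showing that for $b$ sufficiently large the solution $\vphi$ enters a neighborhood of $\Phi_\infty$ of radius smaller than the stability threshold $\delta$ provided there. The key observation is that, in the regime $b\to\infty$, one has $\alpha:=2^{-1/b}\to 1$ and $\sigma\to\theta:=2^{\gamma-1}$, so that \eqref{equation2} formally reduces to the ordinary differential equation $H_*'(y)=-(\theta-1)H_*(y)^2$ with $H_*(0)=1$, whose explicit solution $H_*(y)=\frac{1}{1+(\theta-1)y}$ satisfies $yH_*(y)\nearrow\Phi_\infty=\frac{1}{\theta-1}$ as $y\to\infty$. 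Thus for every $\eta>0$ there is $Y_0$ with $|yH_*(y)-\Phi_\infty|<\eta$ whenever $y\ge Y_0$, and it is enough to show that the actual profile $H$ is close to $H_*$ on a sufficiently large interval.

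The technical heart is a quantitative comparison: I would prove that $\|H-H_*\|_{C^0([0,Y])}\to 0$ as $b\to\infty$ for every fixed $Y>0$. Setting $\Delta:=H-H_*$, one has
\begin{equation*}
\Delta'(y)=-\sigma\bigl[H(\alpha y)^2-H(y)^2\bigr]-(\sigma-1)\bigl[H(y)^2-H_*(y)^2\bigr]+(\theta-\sigma)H_*(y)^2.
\end{equation*}
Using the bounds $0\le H,H_*\le 1$ (from monotonicity, Lemma~\ref{lem:wellpos}) and $|H'|\le\sigma+1$ read off from the equation itself, the mean value theorem gives $|H(\alpha y)-H(y)|\le(\sigma+1)(1-\alpha)y$, hence the first bracket is $O((1-\alpha)y)=O(y/b)$. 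The second term is Lipschitz in $\Delta$ with constant $O(\sigma-1)$, and the third is $O(\theta-\sigma)=O(1/b)$. A standard Gronwall inequality on $[0,Y]$ then yields $|\Delta(y)|\le C_{\gamma,Y}/b$. A bootstrap/continuation argument, using the strict positivity of $H_*$ on $[0,Y]$, ensures that the a priori bounds $0<H\le 1$ used in the Gronwall step remain valid throughout the interval for $b$ large enough.

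To conclude, let $\delta>0$ be the stability radius from Proposition~\ref{prop:stability}, pick $Y_0$ with $|yH_*(y)-\Phi_\infty|<\delta/2$ for all $y\ge Y_0$, and apply the above comparison with $Y=2Y_0$: for $b$ larger than some threshold one obtains $|yH(y)-yH_*(y)|\le 2Y_0\,\|H-H_*\|_{C^0([0,2Y_0])}<\delta/2$ on $[0,2Y_0]$, and therefore $|\vphi(z)-\Phi_\infty|<\delta$ for every $z\in[\ln Y_0,\ln Y_0+\ln 2]$. Since $d=\frac{\ln 2}{b}<\ln 2$ for $b>1$, this window contains an interval of length $d$, so taking $b_2>\max\{b_*,b_0\}$ sufficiently large and applying Proposition~\ref{prop:stability} with initial datum on such an interval yields the global existence, positivity (combined with $H>0$ on $[0,Y_0]$ for $z$ below the matching point) and exponential convergence to $\Phi_\infty$ asserted in the statement. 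The main obstacle is precisely making the comparison with the limit ODE rigorous in the presence of the functional argument $\alpha y$ versus $y$: one has to keep $|H'|$ and the positivity of $H$ under control uniformly in $b$ on intervals whose length depends only on $\gamma$, and to ensure that the nonlinear Gronwall argument closes strictly before $H$ could approach zero.
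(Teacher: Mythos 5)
Your proof is correct and follows essentially the same strategy as the paper: compare the solution to the explicit solution of the delay-free limit ODE, control the discrepancy by a Gronwall estimate in which the functional argument is handled via a mean-value/Taylor expansion, and then invoke Proposition~\ref{prop:stability} once the solution enters a small neighborhood of $\Phi_\infty$ over a full delay window. The only difference is technical: you run a single bootstrapped Gronwall argument in the $y$-variable on $[0,2Y_0]$, whereas the paper first establishes uniform convergence on a small fixed $y$-interval $[0,y_0]$ (where positivity is automatic) and then passes to the logarithmic variable $z$ for the Gronwall step.
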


\begin{proof}
The idea of the proof is that for $b$ large enough (or equivalently for small values of the delay $d=\frac{\ln2}{b}$) the solution to \eqref{equation3} is close to the function $\psi$ solving the limit problem
\begin{equation} \label{btoinf1}
\begin{cases}
\psi'(z)=\psi(z)-(2^{\gamma-1}-1)\bigl(\psi(z)\bigr)^2\,,\\
\psi(z)\sim e^z  \qquad\qquad\text{as } z\to-\infty,
\end{cases}
\end{equation}
which is explicitly given by
$$
\psi(z)= \frac{e^z}{1+(2^{\gamma-1}-1)e^z}\,.
$$
As $\psi(z)\to\Phi_\infty$ as $z\to\infty$, the conclusion will follow by the asymptotic stability of $\Phi_\infty$ proved in Proposition~\ref{prop:stability}. We simplify the notation by setting $\theta=2^{\gamma-1}$. We also stress the dependence on the parameter $d$ by writing $\vphi_d$ for the solution to \eqref{equation3}.

\smallskip

By the change of variables \eqref{variables3}, $\vphi_d$ is transformed into the solution $H_d$ to \eqref{equation2} and $\psi$ into the function $\bar{H}(y) = \frac{1}{1+(\theta-1)y}$. In view of Lemma~\ref{lem:wellpos} the functions $H_d$ are locally defined, positive and analytic in a neighborhood of the origin; by monotonicity and positivity one has the lower bound
\begin{equation*}
H_d'(y) \geq -\sigma\bigl(H_d(2^{-\frac{1}{b}}y)\bigr)^2 \geq -\sigma \geq -\theta\,,
\end{equation*}
which implies that $H_d$ remains positive in a uniform interval $[0,y_0]$, with $y_0>0$ independent of $d$. Moreover, by observing that the functions $H_d$ and their derivatives (of any order) are uniformly bounded in $[0,y_0]$, one can pass to the limit in the equation and obtain that $H_d$ converges uniformly to $\bar{H}$ in $[0,y_0]$ as $d\to0^+$.

\smallskip

Going back to the solution $\vphi_d$ to \eqref{equation3} with the change of variables \eqref{variables3}, the previous discussion shows for every value of the parameter $d$ the existence of a unique solution to \eqref{equation3} at least in some interval $(-\infty,z_0]$, where $z_0$ is uniform with respect to $d$, and
\begin{equation} \label{btoinf1b}
\vphi_d\to\psi \qquad\text{uniformly in $(-\infty,z_0]$ as $d\to0^+$.}
\end{equation}
Moreover, by Lemma~\ref{lem:wellpos} $\vphi_d$ can be continued as long as it remains positive, and is uniformly bounded from above by the constant $\frac{1}{\sigma-1}=\frac{1}{\theta2^{-2/b}-1}$; since we are looking at the regime $b\to\infty$, we can hence assume without loss of generality that $\vphi_d$ is bounded from above by a uniform constant $K_1>0$. Therefore, if $z_1$ is the first point at which the solution crosses zero (with $z_1=\infty$ if $\vphi_d$ is everywhere positive), we have
\begin{equation} \label{btoinf2}
	0<\vphi_d(z) \leq K_1, \qquad |\vphi_d'(z)| \leq K_1 + (\theta+1)K_1^2 =: K_2 \qquad\qquad \text{for all } z<z_1\,.
\end{equation}

Define now the function $\eta(z):=\vphi_d(z)-\psi(z)$, where $\psi$ is the solution to \eqref{btoinf1}.
By taking the Taylor expansion of $\vphi_d$ at a point $z$,
\begin{equation*}
	\vphi_d(z-d) = \vphi_d(z) - d\vphi_d'(t_z)\,, \qquad t_z\in(z-d,z),
\end{equation*}
and by the explicit representation of $\psi$ we find that $\eta$ solves
\begin{align*}
	\eta'(z)
	& = \eta(z) - (\theta-1)\bigl(\vphi_d(z)+\psi(z)\bigr)\eta(z) + 2\theta d\vphi_d(z)\vphi_d'(t_z) - \theta d^2\vphi_d'(t_z)^2 \\
	& = \Bigl( 1 - (\theta-1)\bigl(\eta(z)+2\psi(z)\bigr) \Bigr)\eta(z) + 2\theta d\vphi_d(z)\vphi_d'(t_z)-\theta d^2\vphi_d'(t_z)^2 \\
	& = p(z)\eta(z) - (\theta-1)\eta(z)^2 + r(z)\,,
\end{align*}
where we set
\begin{equation*}
p(z) :=\frac{1-(\theta-1)e^z}{1+(\theta-1)e^z}\,,
\qquad
r(z) :=2\theta d\vphi_d(z)\vphi_d'(t_z)-\theta d^2\vphi_d'(t_z)^2\,.
\end{equation*}
Notice that the estimates \eqref{btoinf2} and the explicit definitions of $\psi$ and $p$ give
\begin{equation*}
|p(z)-(\theta-1)\eta(z)| \leq K, \qquad |r(z)| \leq Kd \quad\quad\text{for all }z<z_1,
\end{equation*}
for some uniform constant $K>0$. Therefore for every $z\in[z_0, z_1]$
\begin{equation} \label{btoinf3}
\begin{split}
|\eta(z)| &= \bigg| \eta(z_0)e^{\int_{z_0}^z (p(w)-(\theta-1)\eta(w))\de w} + \int_{z_0}^z e^{\int_{w}^z (p(s)-(\theta-1)\eta(s))\de s}r(w)\de w \bigg| \\
& \leq |\eta(z_0)| e^{K(z-z_0)} + dK e^{K(z-z_0)}\,.
\end{split}
\end{equation}
Recall that by \eqref{btoinf1b} the value $|\eta(z_0)|$ can be made arbitrarily small for $d\to0^+$. Therefore \eqref{btoinf3}, together with the fact that $\psi(z)\to\Phi_\infty$ as $z\to\infty$, implies that for any given $\e>0$ we can find $R>z_0$ and $d_0>0$ such that for all $d<d_0$
\begin{equation*}
\big|\vphi_d(z)-\Phi_\infty\big| \leq |\eta(z)| + |\psi(z)-\Phi_\infty| \leq \e \qquad\text{for every }z\in[R-d,R]\,.
\end{equation*}
Notice that the estimate itself implies the positivity of $\vphi_d$, so that it can be extended up to the point $R$. By choosing $\e$ small enough the asymptotic stability of $\Phi_\infty$ proved in Proposition~\ref{prop:stability} yields the conclusion.
\end{proof}


\subsection{Discussion of the oscillating solutions} \label{subsect:oscill}
Numerical simulations for the delay equation \eqref{equation3}, see Figure~\ref{fig:oscill} and the paper \cite{Ley05}, show the emergence of oscillations in the solution $\vphi$ for values of the free parameter $b<b_*$, that is in the instability regime of the constant solution. The amplitude of such oscillations becomes larger as the free parameter $b$ approaches from above a critical value $\bar b$, for which we expect to have an exponentially decaying solution; for $b<\bar b$ we see instead a change of sign in the solution.

If we go back to the function $H$ solving \eqref{equation2} with the change of variable \eqref{variables3}, we see that the oscillating solutions are transformed in profiles having a stair-like structure, where a sequence of large plateaus are spaced out by transition regions. The overall decay is that of a power law,
\begin{equation*}
H(y) \geq\frac{c}{1+y}\,,
\end{equation*}
with the constant $c>0$ becoming smaller and smaller as the parameter $b$ approaches the critical value $\bar b$, for which we expect an exponentially decaying solution.
\begin{figure}
	\centering
	\includegraphics{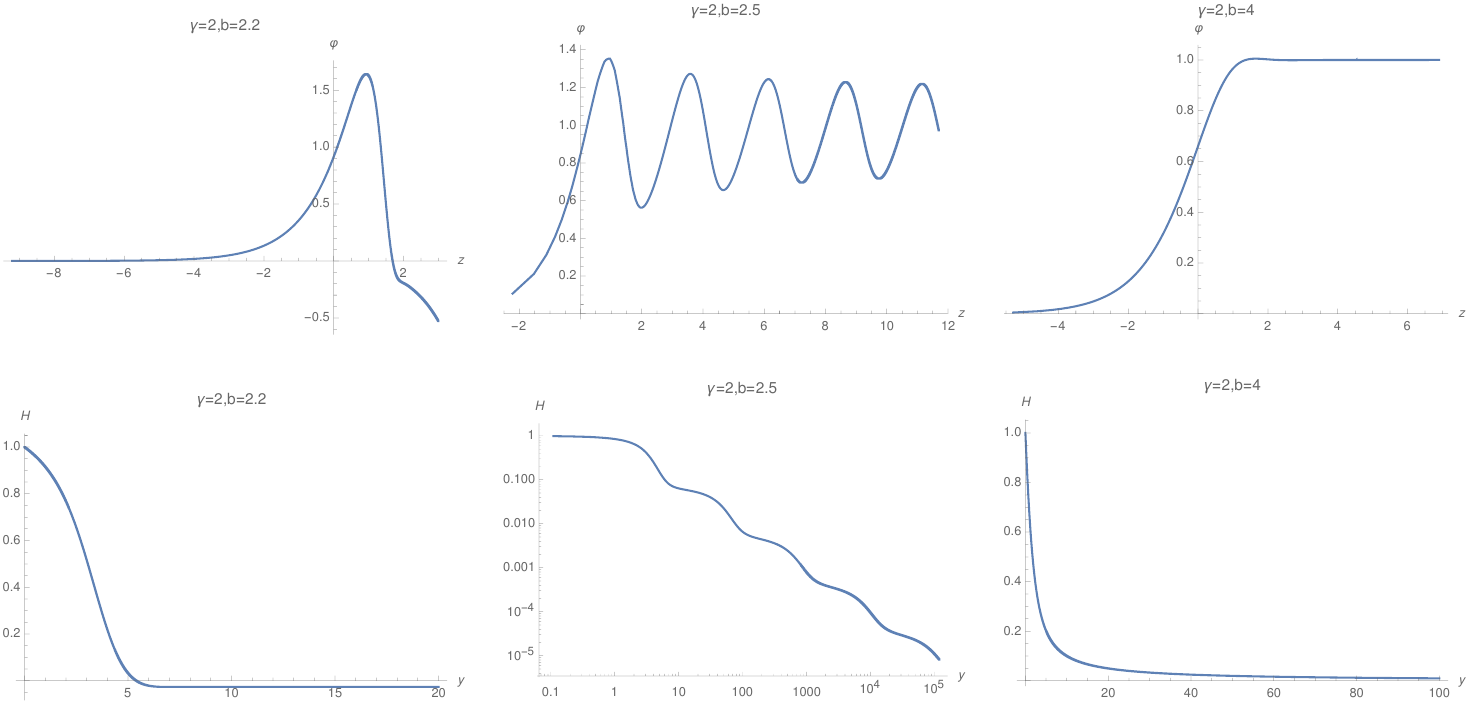}
	\caption{Numerical plots of the solution $\vphi$ to \eqref{equation3} (first row) and of the corresponding solution $H$ to \eqref{equation2} (second row), for different values of the free parameter $b$ and for $\gamma=2$. The two functions $\vphi$ and $H$ are related by the change of variables \eqref{variables3}. For small values of $b$ we have a change of sign of the solution (left), and for large values of $b$ the solution $\vphi$ converges to the constant $\Phi_\infty$, which corresponds to a power law decay $H(y)\sim\frac{1}{y}$ of $H$ (right). For intermediate values $b<b_*$ the solution $\vphi$ develops oscillations, which are reflected in a stair-like structure of the function $H$, plotted in a logaritmic scale (center).}
	\label{fig:oscill}
\end{figure}

We present below an iterative argument which sheds some light on these stair-like solutions observed numerically. Our discussion will be restricted to the limit case in which we send to infinity the value $\gamma$ of the homogeneity. In this case a rescaling by the parameter $\sigma$, namely $h(y):= H(\frac{y}{\sigma})$, leads to the equation
\begin{equation*}
\begin{cases}
h'(y) = -\bigl(h(2^{-\frac1b}y)\bigr)^2 + \frac{1}{\sigma}\bigl(h(y)\bigr)^2\,,\\
h(0)=1.
\end{cases}
\end{equation*}
Since $\sigma\to\infty$ as $\gamma\to\infty$, we directly look at the limit case in which we neglect the second term on the right-hand side of the equation; moreover we write $2^{-\frac1b}=\frac12(1+\e)$, so that the problem can be rewritten as
\begin{equation} \label{oscill1}
\begin{cases}
h'(y) = -\bigl(h\bigl(\frac{y}{2}(1+\e)\bigr)\bigr)^2\,,\\
h(0)=1.
\end{cases}
\end{equation}
The critical value of the free parameter $b$ is in this case $\bar{b}=1$, corresponding to $\e=0$, for which the explicit solution is $\bar{h}(y)=e^{-y}$. We will show in Proposition~\ref{prop:oscill} that solutions for small values of $\e>0$ present the stair-like structure described above, while for $\e<0$ a change of sign takes place.

The advantage of looking at this limit case is that the explicit solution is known for the critical value of the parameter. However, we expect that the same picture holds in the general case: assuming that for some critical value $\bar{b}$ there is an exponentially decaying solution, for values of $b>\bar b$ close to $\bar{b}$ solutions have the same stair-like structure described here, while for $b<\bar b$ solutions change sign.

\begin{proposition} \label{prop:oscill}
	For every $\e>0$ small enough the solution $h$ to \eqref{oscill1} satisfies the bound
	\begin{equation*}
	h(y) \geq \frac{c\e^{1-\alpha}}{1+y}\,,
	\end{equation*}
	where $\alpha\in(0,\frac13)$ is any given number, and $c>0$ is a constant independent of $\e$. For $\e<0$ the solution becomes negative.
\end{proposition}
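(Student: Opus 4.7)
The plan is to exploit the fact that $\bar h(y)=e^{-y}$ is the exact solution of (\ref{oscill1}) at $\e=0$ and to perturb around it, differently in the two signs of $\e$. Both cases start from the substitution $h(y)=e^{-y}(1+v(y))$, $v(0)=0$, which transforms (\ref{oscill1}) into the delay equation
\begin{equation*}
v'(y)=(1-e^{-\e y})+v(y)-2e^{-\e y}v\bigl(\tfrac{y}{2}(1+\e)\bigr)-e^{-\e y}v\bigl(\tfrac{y}{2}(1+\e)\bigr)^{2},
\end{equation*}
whose source $(1-e^{-\e y})=\e y+O(\e^2 y^2)$ has the sign of $\e$. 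At leading order in $\e$ one has $v=\e g+O(\e^2)$, where $g$ solves the linear pantograph-type problem $g'(y)=y+g(y)-2g(y/2)$ with $g(0)=0$; a power-series analysis shows that $g$ is entire and $g(y)\sim c_\infty e^{y}$ at infinity, with $c_\infty=\prod_{k\geq 1}(1-2^{-k})$. This sets up rigorous Gronwall control on $v$ on any interval where the quadratic remainder can be absorbed by a bootstrap.

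\textbf{Case $\e<0$.} The Gronwall estimate above yields $v(y)\leq -c|\e|e^{y}$ on the interval of validity of the linearization, so $v$ reaches $-1$ at some finite $y_\dagger=O(\log(1/|\e|))$. At that point $h(y_\dagger)=e^{-y_\dagger}(1+v(y_\dagger))=0$ and a direct inspection of (\ref{oscill1}) gives $h'(y_\dagger)=-h\bigl(\tfrac{y_\dagger}{2}(1+\e)\bigr)^{2}<0$, so $h$ crosses into the negative range. A comparison/maximum-principle argument in the spirit of Lemma~\ref{lem:wellpos} confirms that the quadratic remainder and the delay cannot flip the sign back.

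\textbf{Case $\e>0$: stair construction.} The perturbation above shows that, up to a threshold $y_*=O(\log(1/\e))$, the solution tracks $e^{-y}+O(\e)$ and enters a quasi-plateau of height of order $\e$. Past $y_*$ I would bootstrap along the dyadic scale $Y_n=(2/(1+\e))^{n}y_*$ dictated by the delay, so that $y\mapsto y(1+\e)/2$ sends $[Y_n,Y_{n+1}]$ onto $[Y_{n-1},Y_n]$. Setting $h_n$ to be the value of $h$ at $Y_n$, the monotonicity of $h$ (inherited from the proof of Lemma~\ref{lem:wellpos}, since here $h'\leq -h^2$) and a direct integration of (\ref{oscill1}) give the two-step recursion
\begin{equation*}
h_{n+1}\geq h_n - h_{n-1}^{2}\,(Y_{n+1}-Y_n).
\end{equation*}
Coupled with the a priori upper bound $h(y)\leq 1/(1+y)$ coming from the same monotonicity argument, this recursion should be iterated to prove by induction a bound of the form $h_n\geq A\e^{1-\alpha}/(1+Y_n)$. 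Monotonicity of $h$ then upgrades the discrete inequality to the pointwise bound claimed in the statement. The restriction $\alpha<1/3$ is expected to arise naturally from the algebra of balancing the quadratic loss $h_{n-1}^{2}(Y_{n+1}-Y_n)\sim 1/Y_n$ against the decrement $h_n-h_{n+1}\sim A\e^{1-\alpha}/Y_n$ permitted by the target ansatz.

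\textbf{Main obstacle.} The delicate part is plainly the induction in the stair construction. Because the recursion couples $h_{n+1}$ to both $h_n$ and $h_{n-1}$, a simultaneous tracking of upper and lower bounds is needed; and because the a priori upper bound $1/(1+y)$ is not sharper than the expected asymptotic $h\sim 1/(4y)$, the quadratic loss at each stair must be absorbed by a margin carefully built into the exponent. Propagating this margin across the $O(\log(1/\e))$ transition stairs that bridge the exponential region to the stair-like tail is precisely what degrades the natural exponent $1$ to $1-\alpha$, and the threshold $\alpha=1/3$ is the point at which the two-step recursion ceases to be self-improving.
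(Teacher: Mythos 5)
Your setup and the $\e<0$ sketch are broadly in the spirit of the paper: both perturb around the exact solution $\bar h(y)=e^{-y}$, both identify that the leading perturbation solves the linear pantograph-type equation $g'=y+g-2g(y/2)$, and your constant $c_\infty=\prod_{k\geq1}(1-2^{-k})$ does coincide with the paper's $c_0=\int_0^\infty Q(\eta)\eta\,\de\eta$ (a $q$-series identity). For $\e<0$ this is essentially the paper's argument at a coarser level of detail.

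For $\e>0$, however, your stair construction as written cannot close, and the obstacle you flag at the end is in fact fatal rather than merely delicate. To propagate the ansatz $h_n\geq A\e^{1-\alpha}/(1+Y_n)$ through the two-step recursion $h_{n+1}\geq h_n-h_{n-1}^2\,(Y_{n+1}-Y_n)$, you must bound the loss term $h_{n-1}^2\,(Y_{n+1}-Y_n)$ from \emph{above}, and your only available upper bound is the a priori one $h\leq 1/(1+y)$. With $Y_{n+1}-Y_n\sim Y_n$ and $Y_{n-1}\sim Y_n/2$, this gives a loss of order $1/Y_n$, whereas the allowed decrement of the ansatz between $Y_n$ and $Y_{n+1}$ is only of order $\e^{1-\alpha}/Y_n$. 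Since $\e^{1-\alpha}\ll1$, the loss dominates by a factor $\e^{-(1-\alpha)}$ and the induction fails at the very first step past $y_*$; no choice of $\alpha\in(0,1/3)$ repairs this. The missing ingredient is a matching sharp upper bound of the same order $\e^{1-\alpha}/y$, i.e.\ a genuinely two-sided control, and establishing that requires tracking the precise leading coefficient through each plateau.

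The paper does exactly this, but by a structurally different mechanism that your proposal does not capture. Instead of integrating stair by stair, after one plateau it performs a \emph{self-similar rescaling} $h_{n+1}(y)=\lambda_n^{-1}h_n(\lambda_n^{-1}y)$ with $\lambda_n=c_n(\e)\e+k_n\e^2$, mapping the problem onto a copy of itself with data $1-y+O(\e^{2-3\alpha})$ prescribed on a small interval near the origin. Within each period the solution is represented via the fundamental solution $G(x,\xi)=e^xQ(\xi)+\widetilde G(x,\xi)$ of the linear delay equation (Lemma~\ref{lem:linearproblem}), which yields the exact constants $c_n(\e),k_n$ governing the rescaling and controls the remainder to $O(\e^{3-3\alpha})$; the restriction $\alpha<1/3$ comes from keeping this remainder subleading over an interval of length $\e^{-\alpha}$, not from a discrete recursion. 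Without this fundamental-solution representation — or some equivalent device producing matching two-sided bounds with the same $\e$-dependent leading coefficient — the dyadic recursion you propose has no way to beat the quadratic loss term, and the conclusion does not follow.
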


\begin{proof}
We construct a perturbative solution to \eqref{oscill1} in the form
\begin{equation} \label{oscill2}
h(y) = \bar{h}(y) + \e w_1(y) + \e^2 w_2(y) + r(y)\,,
\end{equation}
where $\bar{h}(y)=e^{-y}$. We will first show that $h$ remains of order $\e$ in a large interval, at least until $y\sim\e^{-\alpha}$. Then, rescaling the solution by $\e$, we obtain a new function $h_1$ solving the same equation as $h$, and taking prescribed initial values of order one in an interval close to the origin; this function has therefore the same behaviour as $h$, that is, it is approximately constant of order $\e$ in a large interval. The whole picture obtained by the iteration of this argument gives the stair-like solution observed numerically, where the same structure with a large plateau is rescaled and repeated.

\medskip\noindent\textit{Step 1: computation of the asymptotics of $h$.}
We take a Taylor expansion
\begin{align*}
\textstyle h\bigl(\frac{y}{2}(1+\e)\bigr)
& \textstyle = \bar{h}\point + \e \Bigl( \frac{y}{2}\bar{h}'\point + w_1\point \Bigr)
+ \e^2 \Bigl( \frac{y^2}{8}\bar{h}''(\xi) + \frac{y}{2}w_1'(\xi_1) + w_2\point \Bigr) \\
& \textstyle \qquad + \e^3\frac{y}{2}w_2'(\xi_2) + r\point + \e\frac{y}{2}r'(\xi_3)
\end{align*}
for suitable points $\xi,\xi_1,\xi_2,\xi_3\in(\frac{y}{2},\frac{y}{2}(1+\e))$. The right-hand side of equation \eqref{oscill1} becomes
\begin{align*}
\textstyle -\bigl(h\bigl(\frac{y}{2}(1+\e)\bigr)\bigr)^2
& \textstyle = - \bigl(\bar{h}\point\bigr)^2 - 2\e\bar{h}\point \Bigl[ \frac{y}{2}\bar{h}'\point + w_1\point \Bigr]
- \e^2 \Bigl[ \frac{y}{2}\bar{h}'\point + w_1\point \Bigr]^2 \\
& \textstyle \qquad - 2\e^2\bar{h}\point \Bigl[ \frac{y^2}{8}\bar{h}''(\xi) + \frac{y}{2}w_1'(\xi_1) + w_2\point \Bigr] - 2\bar{h}\point r\point + \tilde{r}(y)
\end{align*}
where with $\tilde{r}$ we denote the collection of all the remainder terms:
\begin{align} \label{oscill2a}
\tilde{r}(y)
& \textstyle = - \bigl(r\point\bigr)^2 -2\e\Bigl( \frac{y}{2}\bar{h}'\point + w_1\point \Bigr)r\point - \e y\bigl(\bar{h}\point+r\point\bigr) r'(\xi_3) -\e^2\frac{y^2}{4}\bigl(r'(\xi_3)\bigr)^2 \nonumber\\ 
& \textstyle\qquad - \e^2 y \Bigl( \frac{y}{2}\bar{h}'\point + w_1\point \Bigr) r'(\xi_3) - 2\e^2 \Bigl( \frac{y^2}{8}\bar{h}''(\xi) + \frac{y}{2}w_1'(\xi_1) + w_2\point \Bigr) r\point \nonumber\\
& \textstyle\qquad - 2\e^3 \Bigl( \frac{y}{2}\bar{h}'\point + w_1\point \Bigr)\Bigl( \frac{y^2}{8}\bar{h}''(\xi) + \frac{y}{2}w_1'(\xi_1) + w_2\point \Bigr) \nonumber\\
& \textstyle\qquad - \e^3 y\Bigl( \frac{y^2}{8}\bar{h}''(\xi) + \frac{y}{2}w_1'(\xi_1) + w_2\point \Bigr) r'(\xi_3) - \e^3y \Bigl(\bar{h}\point + r\point\Bigr)w_2'(\xi_2) \nonumber\\
& \textstyle\qquad - \e^4\Bigl( \frac{y^2}{8}\bar{h}''(\xi) + \frac{y}{2}w_1'(\xi_1) + w_2\point \Bigr)^2 - \e^4y\Bigl( \frac{y}{2}\bar{h}'\point + w_1\point + \frac{y}{2}r'(\xi_3)\Bigr)w_2'(\xi_2) \nonumber\\
& \textstyle\qquad - \e^5 y \Bigl( \frac{y^2}{8}\bar{h}''(\xi) + \frac{y}{2}w_1'(\xi_1) + w_2\point \Bigr) w_2'(\xi_2) - \e^6\frac{y^2}{4}\bigl(w_2'(\xi_2)\bigr)^2\,.
\end{align}
The functions $w_1$ and $w_2$ solve the following two equations:
\begin{equation} \label{oscill2b}
\begin{split}
w_1'(y) & \textstyle = -2e^{-\frac{y}{2}}w_1\bigl(\frac{y}{2}\bigr) +ye^{-y}\,,\\
w_2'(y) & \textstyle = -2e^{-\frac{y}{2}}w_2\bigl(\frac{y}{2}\bigr) - \bigl(w_1\bigl(\frac{y}{2}\bigr)\bigr)^2 - \frac{y^2}{4}e^{-y} - \frac{y^2}{4}e^{-\frac{y}{2}}e^{-\xi} - ye^{-\frac{y}{2}}w_1'(\xi_1) + ye^{-\frac{y}{2}}w_1\bigl(\frac{y}{2}\bigr)\,,
\end{split}
\end{equation}
with $w_1(0)=0$, $w_2(0)=0$; the remainder $r$ solves instead
\begin{align} \label{oscill2c}
r'(y) = \textstyle -2e^{-\frac{y}{2}} r\bigl(\frac{y}{2}\bigr) + \tilde{r}(y)\,,\qquad r(0)=0.
\end{align}
All these equations can be seen as versions of the same linearized problem, with different source terms; by rescaling with an exponential factor we can reduce it to the linear delay equation discussed in Section~\ref{sect:linear}. Indeed by setting $\tilde{w}_1(y) = e^yw_1(y)$ we have
\begin{equation*}
\tilde{w}_1'(y) = \tilde{w}_1(y) - 2 \tilde{w}_1\bigl(\textstyle\frac{y}{2}\bigr) + y, \qquad \tilde{w}_1(0)=0,
\end{equation*}
and this equation can be solved using the fundamental solution computed in Lemma~\ref{lem:linearproblem}: with the notation introduced in that lemma,
\begin{equation} \label{oscill3}
w_1(y) = e^{-y}\tilde{w}_1(y) = e^{-y} \int_0^y G(y,\eta)\eta\de\eta = \int_0^y Q(\eta)\eta\de\eta + e^{-y}\int_0^y \widetilde{G}(y,\eta)\eta\de\eta\,.
\end{equation}
As the functions $Q$ and $\widetilde{G}$ obey the estimates \eqref{linearbounds1}--\eqref{linearbounds2}, from \eqref{oscill3} we obtain for a fixed $\beta\in(0,\frac12)$ the asymptotics
\begin{equation}\label{oscill4}
w_1(y) = \int_0^\infty Q(\eta)\eta\de\eta + O(e^{-\beta y}) \qquad\text{as }y\to\infty.
\end{equation}
Notice that the integral above is a strictly positive quantity: indeed, the function $Q$ is defined in \eqref{linearG} as an alternating series, $Q(\eta)=\sum_{n=0}^\infty (-1)^na_n e^{-2^n\eta}$, $a_n>0$, and it is easily seen that the sequence of the coefficients $(a_n)_n$ is strictly decreasing starting from $n=2$. Therefore it is sufficient to compute the contribution to the integral in \eqref{oscill4} by the first two terms of the sum, as the rest gives a strictly positive quantity:
\begin{equation} \label{oscill5}
\begin{split}
c_0 := \int_0^\infty Q(\eta)\eta\de \eta
& = \int_0^\infty e^{-\eta}\eta\de\eta -4\int_0^\infty e^{-2\eta}\eta\de\eta +\int_0^\infty\sum_{n=2}^\infty (-1)^na_n e^{-2^n\eta}\eta\de\eta \\
& = \int_0^\infty\sum_{n=2}^\infty (-1)^na_n e^{-2^n\eta}\eta\de\eta >0\,.
\end{split}
\end{equation}

We now look at the equation \eqref{oscill2b} for $w_2$: this is the same type of linearized problem as before; therefore denoting by $s(y)$ the source term on the right-hand side of the equation we have the representation formula for the solution
\begin{align*}
w_2(y) = e^{-y}\int_0^y G(y,\eta)e^{\eta}s(\eta)\de\eta = \int_0^y Q(\eta)e^\eta s(\eta) \de\eta + e^{-y}\int_0^y \widetilde{G}(y,\eta)e^\eta s(\eta)\de\eta\,.
\end{align*}
Notice that all the source terms have the exponential decay $O(ye^{-\frac{y}{2}})$ as $y\to\infty$, except for the term $- \bigl(w_1\bigl(\frac{y}{2}\bigr)\bigr)^2 \sim -c_0^2 + O(e^{-\frac{\beta y}{2}})$. Therefore using the definition for $Q$ and the decay estimates \eqref{linearbounds1}--\eqref{linearbounds2} we have as $y\to\infty$
\begin{equation} \label{oscill6}
\begin{split}
w_2(y) &= \int_0^y s(\eta)\de\eta + \sum_{n=1}^\infty (-1)^n a_n \int_0^y e^{(1-2^n)\eta}s(\eta)\de\eta + e^{-y}\int_0^y \widetilde{G}(y,\eta)e^\eta s(\eta)\de\eta \\
&= -c_0^2 y + k_0 + O(e^{-\frac{\beta y}{2}}) \qquad\text{as }y\to\infty
\end{split}
\end{equation}
for some constant $k_0$ depending on all the source terms.

Having the two asymptotics \eqref{oscill4} and \eqref{oscill6} for $w_1$ and $w_2$, we can compute the next order correction in the expansion \eqref{oscill2}, which is given by the function $r$ solving \eqref{oscill2c}. Using again the fundamental solution $G$ of the linear problem and the bound $|G(y,\eta)|\leq C_0e^{y-\eta}$, which follows from \eqref{linearbounds1}--\eqref{linearbounds2}, we have
\begin{align} \label{oscill7c}
|r(y)| = \bigg| e^{-y}\int_0^y G(y,\eta) e^\eta \tilde{r}(\eta)\de\eta\bigg| \leq C_0 \displaystyle\int_0^y |\tilde{r}(\eta)| \de\eta\,.
\end{align}
Let $\alpha\in(0,\frac13)$ be any fixed number. We claim that for some constant $C>0$
\begin{equation} \label{oscill7}
|r(y)| \leq C\e^{3-3\alpha} \qquad\text{for all }y\leq\frac{1}{\e^\alpha}.
\end{equation}
This estimate can be proved by means of a continuation argument: indeed, assume that
\begin{equation} \label{oscill7b}
|r(\eta)| + |r'(\eta)| \leq C\e^{3-3\alpha} \qquad\text{for all }\eta\leq y,
\end{equation}
for some $y\in(0,\e^{-\alpha})$. This estimate is certainly true for $y$ small enough, as $r(0)=r'(0)=0$. By plugging \eqref{oscill7b} into the explicit expression \eqref{oscill2a} of $\tilde{r}$ one finds after straightforward estimates, using also the asymptotics \eqref{oscill4} and \eqref{oscill6} for $w_1$ and $w_2$, that for all $\eta\leq y$
\begin{align*}
|\tilde{r}(\eta)|
& \textstyle \leq C_1\e^{2\alpha}\e^{3-3\alpha}
\end{align*}
for a new constant $C_1$ depending ultimately on $C$, $c_0$, and $k_0$. In turn, by \eqref{oscill7c} we obtain
\begin{equation*}
|r(\eta)| \leq C_0 C_1 \e^{2\alpha} \e^{3-3\alpha}\eta \leq C_0C_1\e^\alpha\e^{3-3\alpha}\,,
\end{equation*}
while \eqref{oscill2c} yields
\begin{equation*}
|r'(y)| \leq 2e^{-\frac{y}{2}}\big|r\textstyle\point\big| + |\tilde{r}(y)|
\leq \bigl(2C_0C_1 + C_1\bigr)\e^\alpha \e^{3-3\alpha}\,.
\end{equation*}
Therefore, if $\e$ is small enough, from the previous two estimates it follows
\begin{equation*}
|r(y)| + |r'(y)| \leq \frac{1}{2}C\e^{3-3\alpha}\,.
\end{equation*}
Since we obtained this inequality just assuming \eqref{oscill7b} for some $y\in(0,\e^{-\alpha})$, we conclude by a continuation argument that the claim \eqref{oscill7} holds for every $y\leq\e^{-\alpha}$.

Finally, by collecting \eqref{oscill4}, \eqref{oscill6} and \eqref{oscill7} we obtain from \eqref{oscill2}
\begin{equation*}
h(y) = c_0\e - c_0^2\e^2 y + k_0\e^2 + O(e^{-y} + \e e^{-\beta y} + \e^2e^{-\frac{\beta y}{2}} + \e^{3-3\alpha}).
\end{equation*}
This asymptotics is valid at least until $y\leq \e^{-\alpha}$. In particular in the region $y\sim \e^{-\alpha}$ the exponential terms give a very small contribution and can be neglected. Therefore we can write
\begin{equation} \label{oscill8}
h(y) = c_0\e + k_0\e^2 - c_0^2\e^2 y + O(\e^{3-3\alpha}) \qquad\text{for }y\in\Bigl[\frac{\e^{-\alpha}}{c_0+k_0\e},\frac{2\e^{-\alpha}}{c_0+k_0\e}\Bigr].
\end{equation}
Notice that if $\e$ is negative a change of sign in the solution takes place.

\medskip\noindent
\textit{Step 2: rescaling.} The idea is now to rescale the function $h$ by $\e$, and simultaneously to rescale the independent variable by $\e$, in order to obtain a new function solving an equation with the same structure as before and taking prescribed values of order one in a small interval close to the origin; the asymptotic of this function will be very similar to the one computed for $h$ in the previous step, so that we will repeat the argument and iterate the method. Recall that, given a solution $h$ to \eqref{oscill1} (without initial condition), we obtain a new solution by the rescaling $\tilde{h}(y)=\lambda h(\lambda y)$. Then we define
\begin{equation*}
h_1(y) = \frac{1}{c_0\e+k_0\e^2} h\Bigl(\frac{y}{c_0\e+k_0\e^2}\Bigr)\,,
\end{equation*}
and in view of \eqref{oscill8} we have
\begin{equation*}
h_1(y) = 1-y+O(\e^{2-3\alpha}) \qquad\text{for } y\in[\e^{1-\alpha},2\e^{1-\alpha}]\,.
\end{equation*}
We therefore consider the initial value problem
\begin{equation} \label{oscill8b}
\begin{cases}
h_1'(y) = -\bigl(h_1\bigl(\frac{y}{2}(1+\e)\bigr)\bigr)^2 & y>2\e^{1-\alpha},\\
h_1(y)= 1 - y + O(\e^{2-3\alpha}) & y\in[\e^{1-\alpha},2\e^{1-\alpha}].
\end{cases}
\end{equation}
This problem is very similar to the original one, except for the fact that we prescribe here the values of $h_1$ in an interval close to the origin instead that at the single point $y=0$. We write as in \eqref{oscill2}
\begin{equation}  \label{oscill9}
h_1(y) = \bar{h}(y) + \e w_{1,1}(y) + \e^2 w_{1,2}(y) + r_1(y)\,,
\end{equation}
where $w_{1,1}$, $w_{1,2}$ and $r_1$ solve the same equations as $w_1$, $w_2$ and $r$ in the previous step, namely \eqref{oscill2b}--\eqref{oscill2c}, for $y>2\e^{1-\alpha}$ and take prescribed values
\begin{equation} \label{oscill9b}
w_{1,1}(y)= g(y), \qquad
w_{1,2}(y)= r_1(y)= 0 \qquad
\text{for } y\in[\e^{1-\alpha},2\e^{1-\alpha}]
\end{equation}
with $g(y)=O(\e^{1-3\alpha})$.

The asymptotics of $h_1$ can be now computed in the same manner as before; here we only sketch the argument without giving the full details. The function $w_{1,1}$ can be written in terms of the fundamental solution of the linear problem using the representation formula
\begin{align*}
w_{1,1}(y)
& = e^{-y} \int_{2\e^{1-\alpha}}^y G(y,\eta)\eta\de\eta + e^{-y}G(y,2\e^{1-\alpha})e^{2\e^{1-\alpha}}g(2\e^{1-\alpha}) \\
& \qquad - 2e^{-y} \int_{2\e^{1-\alpha}}^{\min\{y,4\e^{1-\alpha} \}} G(y,\eta)e^{\frac{\eta}{2}}g\Bigl(\frac{\eta}{2}\Bigr)\de\eta\,.
\end{align*}
The asymptotics of the first term is, as in the previous step,
\begin{align*}
e^{-y} \int_{2\e^{1-\alpha}}^y G(y,\eta)\eta\de\eta
= w_1(y) - e^{-y}\int_0^{2\e^{1-\alpha}} G(y,\eta)\eta\de\eta
= c_0 + O(e^{-\beta y }) + O(\e^{2-2\alpha})\,.
\end{align*}
For the other two terms in the expression of $w_{1,1}$ one finds, using the formula \eqref{linearG} for the fundamental solution and the bounds \eqref{linearbounds1}--\eqref{linearbounds2},
\begin{align*}
e^{-y}G(y,2\e^{1-\alpha})e^{2\e^{1-\alpha}}g(2\e^{1-\alpha}) - 2e^{-y} \int_{2\e^{1-\alpha}}^{\min\{y,4\e^{1-\alpha} \}} & G(y,\eta)e^{\frac{\eta}{2}}g\Bigl(\frac{\eta}{2}\Bigr)\de\eta \\
& = Q(2\e^{1-\alpha})e^{2\e^{1-\alpha}}g(2\e^{1-\alpha}) + O(e^{-\beta y})\,.
\end{align*}
Notice that the first term on the right-hand side is just a constant of order $\e^{1-3\alpha}$ (that is the order of the initial datum $g$). Therefore we obtain the following asymptotics for the function $w_{1,1}$:
\begin{equation} \label{oscill10a}
w_{1,1}(y) = c_1(\e) + O(\e^{2-2\alpha} + e^{-\beta y})
\end{equation}
for a positive constant $c_1(\e)=c_0+Q(2\e^{1-\alpha})e^{2\e^{1-\alpha}}g(2\e^{1-\alpha})$, depending on $\e$, which differs from $c_0$ only up to a small remainder of order $\e^{1-3\alpha}$.

We now look at the term $w_{1,2}$, which solves the same equation \eqref{oscill2b} as $w_2$ (with $w_1$ replaced by $w_{1,1}$) for $y>2\e^{1-\alpha}$, with homogeneous initial condition \eqref{oscill9b}. Denote by $s_1(y)$ the sources in the equation for $w_{1,2}$. As in the previous step, all the terms in $s_1$ decay exponentially, except for $-(w_{1,1}(\frac{y}{2}))^2 \sim - c_1(\e)^2 + O(\e^{2-2\alpha}+e^{-\frac{\beta y}{2}})$. Therefore using the representation formula we find
\begin{equation} \label{oscill10b}
w_{1,2}(y) = e^{-y} \int_{2\e^{1-\alpha}}^y G(y,\eta)e^\eta s_1(\eta)\de\eta = -c_1(\e)^2y + k_1 + O(\e^{1-\alpha}) + O(\e^{2-2\alpha}y) + O(e^{-\frac{\beta y}{2}})\,,
\end{equation}
where this equality can be deduced following the same passages leading to \eqref{oscill6}; here $k_1$ is a constant depending on all the source terms. 

Finally, using the aymptotics of $w_{1,1}$ and $w_{1,2}$ just computed, one can apply the same continuation argument as in the previous step to obtain
\begin{equation} \label{oscill10c}
|r_1(y)| \leq C\e^{3-3\alpha} \qquad\text{for all }y\leq\frac{1}{\e^\alpha}.
\end{equation}

By collecting all the asymptotic behaviours \eqref{oscill10a}, \eqref{oscill10b}, \eqref{oscill10c} we deduce from \eqref{oscill9}
\begin{equation*}
h_1(y) = c_1(\e)\e + k_1\e^2 - c_1(\e)^2\e^2y + O(\e^{3-3\alpha} + e^{-\frac{\beta y}{2}}),
\end{equation*}
valid for $y\leq\e^{-\alpha}$. For $y\sim \e^{-\alpha}$, the exponential terms are negligible and we finally obtain an asymptotics which resembles \eqref{oscill8}:
\begin{equation} \label{oscill11}
h_1(y) = c_1(\e)\e + k_1\e^2 - c_1(\e)^2\e^2y + O(\e^{3-3\alpha}) \qquad \text{for }y\in\biggl[\frac{\e^{-\alpha}}{c_1(\e)+k_1\e},\frac{2\e^{-\alpha}}{c_1(\e)+k_1\e}\biggr].
\end{equation}

\medskip\noindent
\textit{Step 3: iteration.}
We can now iterate the previous rescaling argument: at the $n$-th step the functions $h_1,\ldots,h_{n}$ have been defined, with the asymptotics
\begin{equation} \label{oscill12}
h_{j}(y) = c_j(\e)\e + k_j\e^2 - c_j(\e)^2\e^2y + O(\e^{3-3\alpha}) \qquad \text{for }y\in\biggl[\frac{\e^{-\alpha}}{c_j(\e)+k_j\e},\frac{2\e^{-\alpha}}{c_j(\e)+k_j\e}\biggr].
\end{equation}
Then we set
\begin{equation*}
h_{n+1}(y) = \frac{1}{c_n(\e)\e+k_n\e^2} h_n\Bigl(\frac{y}{c_n(\e)\e+k_n\e^2}\Bigr)\,,
\end{equation*}
so that the function $h_{n+1}$ also solves the same equation as $h$ for $y\geq 2\e^{1-\alpha}$ and in view of \eqref{oscill12} takes the initial values
\begin{equation} \label{oscill13}
h_{n+1}(y) = 1-y+O(\e^{2-3\alpha}) \qquad\text{for } y\in[\e^{1-\alpha},2\e^{1-\alpha}]\,.
\end{equation}
This is the very same problem as \eqref{oscill8b}, so that the previous step yields that the asymptotics \eqref{oscill12} is valid also for $j=n+1$. Since the structure of the problem remains the same in all steps, one can check that the constants $c_n(\e)$, $k_n$ remain of order one for every $n$.

Therefore we obtain the formula for $h$
\begin{equation}
h(y) = p_{n}h_{n+1}(p_{n}y) \qquad \text{where } p_{n} := \prod_{j=0}^n \bigl(c_j(\e)\e+k_j\e^2\bigr).
\end{equation}
This representation explains the stair-like structure observed numerically. For every fixed $\theta\in[1,2]$ and for $y=\frac{\theta\e^{1-\alpha}}{p_n}$ one then gets in view of \eqref{oscill13}
\begin{align*}
h(y) = p_n h_{n+1}(\theta\e^{1-\alpha}) \simeq p_n = \frac{\theta\e^{1-\alpha}}{y}\,,
\end{align*}
which proves the estimate in the statement.
\end{proof}


\section{Formal asymptotics in the regime \texorpdfstring{\boldmath$\gamma\to1$}{of homogeneity close to 1}} \label{sect:gammato1}

In this section we discuss the features of solutions to \eqref{equation} for values of the homogeneity $\gamma$ close to 1. We work with the variables introduced in \eqref{variables2}.  The regime $\gamma\to 1^+$ corresponds to $\sigma\to1^+$ and $b\to\infty$, therefore it is convenient to introduce two small parameters $\e>0$, $\delta>0$ by setting
\begin{equation*}
\e = 1-2^{-\frac1b}, \qquad \delta = 2^{\gamma-1-\frac2b}-1
\end{equation*}
and to write the equation \eqref{equation2} in the following form:
\begin{equation} \label{gammato11}
	\begin{cases}
		H'(y) = -(1+\delta)\bigl(H(y(1-\e))\bigr)^2 + \bigl(H(y)\bigr)^2\,,\\
		H(0)=1\,.
	\end{cases}
\end{equation}


\subsection{The limit case\texorpdfstring{: \boldmath$\gamma=1$}{}} \label{subsect:gamma1}

We recall what is known in the case of the diagonal kernel $K(x,y)=x^2\delta(x-y)$. Most of the following observations have already been made in \cite{Ley05}, but a wrong conclusion was drawn in \cite[Theorem~2]{Ley05}, which we correct here.

In general, in the case of homogeneity $\gamma=1$ one looks for self-similar solutions to Smoluchowski's equation \eqref{eq:smol} in the form $f(\xi,t)=e^{-2bt}F(\xi e^{-bt})$, $x=\xi e^{-bt}$, $b>0$, for which the self-similar profile $F$ solves
\begin{equation*}
- b\bigl(2F(x)+x F'(x)\bigr) = \frac12\int_0^x K(x-y,y)F(x-y)F(y)\de y - \int_0^\infty K(x,y)F(x)F(y)\de y\,.
\end{equation*}
In the case of the diagonal kernel, the equation takes the form
\begin{equation*}
-2b F(x) - bx F'(x) = \frac14\Bigl(\frac{x}{2}\Bigr)^2 \bigl(F(x/2)\bigr)^2 - x^2 \bigl( F(x) \bigr)^2\,,
\end{equation*}
and we look for a solution with $F(x)\sim c_0x^{-2}$ as $x\to 0^+$. With the rescaling $\Phi(x)=x^2 F(x)$, which removes the singularity at the origin, the equation becomes
\begin{equation} \label{gamma1a}
bx\Phi'(x) = - \bigl(\Phi(x/2)\bigr)^2 + \bigl(\Phi(x)\bigr)^2\,, \qquad \Phi(0)=1,
\end{equation}
where we have normalized $c_0=1$ by a simple rescaling.

We wish to find a unique value of the parameter $b$ for which \eqref{gamma1a} has a positive solution decaying exponentially to zero. This shooting parameter can be interpreted as the exponent $b$ yielding the expansion of the clusters as $e^{bt}$. By integrating \eqref{gamma1a} between $x_0$ and $x$ and sending $x_0\to0^+$, using the normalization $\Phi(0)=1$, we can write the equation in the integrated form
\begin{equation} \label{gamma1b}
b\Phi(x) = \int_{\frac{x}{2}}^x \frac{1}{s}\bigl(\Phi(s)\bigr)^2\de s + b - \ln2\,,
\end{equation}
and we see that the critical value $b$ for which the solution decays to zero is the unique value such that the additive constant in \eqref{gamma1b} is zero, that is
\begin{equation} \label{gamma1c}
b=\ln2\,.
\end{equation}
Notice that a $C^1$ solution to \eqref{gamma1a} for $b=\ln2$ necessarily satisfies $\Phi'(0)=0$, see Remark~\ref{rm:gamma1} below; in this case the only analytic solution at the origin is the constant solution $\Phi\equiv1$, as one can directly check by looking for a solution in power series. However, we can construct a different solution in the form
\begin{equation} \label{gamma1d}
\Phi(x) = 1 + \sum_{n=1}^\infty a_n x^{n\alpha},
\end{equation}
where $\alpha$ is the unique positive solution to
\begin{equation} \label{gamma1e}
\frac{b\alpha}{2(1-2^{-\alpha})}=1
\end{equation}
(notice in particular that $\alpha>2$). By using the ansatz \eqref{gamma1d} in the equation \eqref{gamma1a}, it is straightforward to determine the coefficients $a_n$ by the recurrence relation
\begin{equation*}
\Bigl( \frac{nb\alpha}{1-2^{-n\alpha}} -2 \Bigr) a_n = \sum_{m=1}^{n-1}a_ma_{n-m}\,, \qquad n\geq 2,
\end{equation*}
where the value of $a_1$ is arbitrary. The series \eqref{gamma1d} is locally convergent in a finite interval around zero, and from the integrated equation \eqref{gamma1b} we see that $\Phi$ is always positive. Choosing $a_1<0$, the solution is monotone decreasing in a neighborhood of the origin, and the equation \eqref{gamma1a} implies that it remains decreasing also for larger $x>0$. Furthermore, since $\Phi$ is uniformly bounded, by standard existence and uniqueness results for ordinary differential equations it can be uniquely continued to all the positive real axis. Finally using \eqref{gamma1b} and the monotonicity of $\Phi$ we find
$$
(\ln2)\Phi(x) \leq \bigl( \Phi(x/2)\bigr)^2 \int_{\frac{x}{2}}^x\frac{1}{s}\de s = (\ln2)\bigl( \Phi(x/2)\bigr)^2\,,
$$
from which the exponential decay to zero can be deduced. We summarize the previous discussion in a proposition.

\begin{proposition}[The case $\gamma=1$] \label{prop:gamma1}
	Let $b=\ln2$. For every $a_1<0$ the equation \eqref{gamma1a} has a unique solution in the form \eqref{gamma1d}, where $\alpha$ is the unique positive solution to \eqref{gamma1e}. Such solution is positive, monotonically decreasing, and converges to zero exponentially as $x\to\infty$.
\end{proposition}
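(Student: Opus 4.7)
The plan is to proceed in four stages: (i) existence of $\alpha$ and derivation of the recurrence; (ii) convergence of the series; (iii) positivity, monotonicity and global extendability; (iv) exponential decay.

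For (i) and (ii), set $f(\alpha)=\frac{b\alpha}{2(1-2^{-\alpha})}$. With $b=\ln 2$ one computes directly $f(0^+)=\frac{b}{2\ln 2}=\frac12$ and $f(\alpha)\to\infty$ as $\alpha\to\infty$, and a logarithmic-derivative check (using $2^\alpha-1>\alpha\ln 2$ for $\alpha>0$) shows that $f$ is strictly increasing on $(0,\infty)$; hence \eqref{gamma1e} has a unique positive root $\alpha$. Strict monotonicity of $f$ additionally gives $f(n\alpha)>1$ for every $n\geq 2$, so the denominators $\frac{bn\alpha}{1-2^{-n\alpha}}-2$ appearing in the recurrence of the excerpt are strictly positive. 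The recurrence itself is obtained by substituting \eqref{gamma1d} into \eqref{gamma1a} and matching coefficients of $x^{n\alpha}$: the order $x^\alpha$ equation reduces exactly to \eqref{gamma1e} and is therefore automatically satisfied, leaving $a_1$ free, while for $n\geq 2$ each $a_n$ is uniquely determined from $a_1,\dots,a_{n-1}$. Convergence then follows from an induction showing $|a_n|\leq C^n$ for a sufficiently large $C=C(a_1)$: the convolution bound $\sum_{m=1}^{n-1}|a_m||a_{n-m}|\leq(n-1)C^n$ is divided by a coefficient that grows linearly in $n$, so the induction closes and the series has positive radius of convergence, defining a unique analytic solution of the prescribed form near $x=0$.

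For (iii), integrate \eqref{gamma1a} between $x_0>0$ and $x$ and send $x_0\to 0^+$ to recover \eqref{gamma1b}; with $b=\ln 2$ the additive constant vanishes, giving
\begin{equation*}
(\ln 2)\,\Phi(x)=\int_{x/2}^x\frac{\Phi(s)^2}{s}\de s.
\end{equation*}
A first zero of $\Phi$ at $x_0>0$ is impossible, because the right-hand side at such $x_0$ would be a strictly positive integral. For monotonicity, the choice $a_1<0$ gives $\Phi'(x)\sim a_1\alpha x^{\alpha-1}<0$ near the origin; if $x_0$ were the first zero of $\Phi'$, equation \eqref{gamma1a} would force $\Phi(x_0)^2=\Phi(x_0/2)^2$ with both quantities strictly positive, hence $\Phi(x_0)=\Phi(x_0/2)$, contradicting the strict decrease on $(0,x_0)$. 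Since $0<\Phi\leq 1$ is bounded, $\Phi$ is extended to all of $(0,\infty)$ by iteratively solving the now ordinary ODE on the dyadic intervals $[2^k x_0,2^{k+1}x_0]$, treating $\Phi(\cdot/2)$ as a known source from the previous interval.

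For (iv), monotonicity combined with the integrated equation yields
\begin{equation*}
(\ln 2)\,\Phi(x)=\int_{x/2}^x\frac{\Phi(s)^2}{s}\de s\leq\Phi(x/2)^2\int_{x/2}^x\frac{\de s}{s}=(\ln 2)\,\Phi(x/2)^2,
\end{equation*}
so $\Phi(x)\leq\Phi(x/2)^2$. Pick any $x_0>0$ and set $\epsilon:=\Phi(x_0)<1$ (strict inequality because $a_1<0$ and $\Phi$ is strictly decreasing from $\Phi(0)=1$); iteration gives $\Phi(2^k x_0)\leq\epsilon^{2^k}$, which together with monotonicity between consecutive dyadic points produces exponential (in fact doubly exponential) decay. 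The main technical subtlety I expect is the inductive bound $|a_n|\leq C^n$ in step (ii), where the constant $C$ must be calibrated to absorb both the convolution factor $n-1$ and the dependence on $a_1$; the argument nevertheless closes because the denominator $\frac{bn\alpha}{1-2^{-n\alpha}}-2$ grows linearly in $n$, while all remaining steps are either direct computations or standard continuation arguments.
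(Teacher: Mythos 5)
Your proposal is correct and follows essentially the same route as the paper, which also derives the recurrence for the $a_n$, invokes local convergence of the series, reads off positivity and monotonicity from the integrated equation (the paper's \eqref{gamma1b} with vanishing additive constant for $b=\ln 2$), and obtains decay from $\Phi(x)\leq\Phi(x/2)^2$; you have simply supplied details the paper leaves implicit (existence and uniqueness of the root $\alpha$, the $|a_n|\leq C^n$ induction, the continuation argument on dyadic intervals). One small slip: $\Phi(2^k x_0)\leq\epsilon^{2^k}$ is doubly exponential in the index $k$ but, since $x\approx 2^k x_0$, it is ordinary exponential decay $\Phi(x)\lesssim\epsilon^{x/x_0}$ in the variable $x$, which is exactly what the proposition claims.
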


\begin{remark} \label{rm:gamma1}
If $\Phi$ is a $C^1$ solution to \eqref{gamma1a}, then the right-hand side of the equation obeys
\begin{align*}
\lim_{x\to0^+}\frac{(\Phi(x))^2 - (\Phi(x/2))^2}{x} = \Phi'(0)\,,
\end{align*}	
which forces $(b-1)\Phi'(0)=0$. Therefore for a generic value of $b\neq1$, the differentiability at the origin of a solution to \eqref{gamma1a} implies that $\Phi'(0)=0$.

In the case $b=1$, the argument given by Leyvraz in \cite[Theorem~2]{Ley05} shows that one can construct an analytic solution in a neighborhood of the origin for every (negative) value of $\Phi'(0)$. But in this case the integrated equation is
\begin{equation*}
\Phi(x) = \int_{\frac{x}{2}}^x \frac{1}{s}\bigl(\Phi(s)\bigr)^2\de s + 1-\ln2
\end{equation*}
from which we see that this solution cannot decay exponentially to zero -- here is where Leyvraz's argument is wrong: indeed in formula (5.17) in \cite{Ley05} there is a missing additive constant. Therefore the solution found by Leyvraz is an analytic, nonconstant solution to \eqref{gamma1a} for $b=1$; such solution is positive and monotone decreasing, but it does not decay to zero as claimed in \cite{Ley05}, and it actually converges to the constant $\frac{1-\ln2}{\ln2}$.
\end{remark}


\subsection{Formal asymptotics} \label{subsect:gammato1}

We now go back to the formal analysis of the equation \eqref{gammato11} in the asymptotic regime $\e\to0^+$, $\delta\to0^+$. We approximate $H(y)\simeq 1-\delta\Psi(y)$, where $\Psi$ solves the linearized equation
\begin{equation} \label{gammato13}
\begin{cases}
\Psi'(y) = 2\Psi(y) - 2\Psi((1-\e)y) +1\,,\\
\Psi(0)=0\,.
\end{cases}
\end{equation}
By looking for a solution in power series $\Psi(y)=\sum_{n=0}^\infty a_ny^n$, we have $a_0=0$, $a_1=1$, and the recurrence relation $(n+1)a_{n+1}=2a_n(1-(1-\e)^n)$ for $n=1,2,\ldots$, which gives
\begin{equation*}
\Psi(y) = y + \sum_{n=1}^\infty \frac{2^n \prod_{k=1}^n[1-(1-\e)^k]}{(n+1)!} y^{n+1}\,.
\end{equation*}

We want to compute the asymptotics of $\Psi$ for large values (of order $\frac{1}{\e}$). To this aim we introduce the variable $\eta=\e y$ and we write the previous series in the form
\begin{equation} \label{gammato14}
\Psi\Bigl(\frac{\eta}{\e}\Bigr) = \frac12\sum_{n=0}^\infty e^{S_n(\eta,\e)}\,,
\quad S_n(\eta,\e) := (n+1)\ln\Bigl(\frac{2\eta}{\e}\Bigr) + \sum_{k=1}^n \ln\bigl(1-(1-\e)^k\bigr) - \sum_{k=1}^{n+1}\ln k \,.
\end{equation}
By elementary manipulations the second term in $S_n$ can be written as
\begin{align*}
\sum_{k=1}^n \ln\bigl(1-(1-\e)^k\bigr)
& = \sum_{k=1}^n \ln(k\e) + \sum_{k=1}^n \ln\Bigl( \frac{1-e^{k\ln(1-\e)}}{k\e} \Bigr) \\
& = \sum_{k=1}^n \ln k + n\ln\e + \sum_{k=1}^n \ln\Bigl( \frac{1-e^{-k\e}}{k\e} \Bigr) + \sum_{k=1}^n \ln \Bigl(\frac{1-e^{k\ln(1-\e)}}{1-e^{-k\e}} \Bigr)\,.
\end{align*}
Therefore by inserting this expression in \eqref{gammato14} we have
\begin{equation} \label{gammato15}
\begin{split}
S_n(\eta,\e) &= (n+1)\ln(2\eta) - \ln\e - \ln(n+1) \\
& \qquad + \sum_{k=1}^n \ln\Bigl( \frac{1-e^{-k\e}}{k\e} \Bigr) + \sum_{k=1}^n \ln \Bigl(\frac{1-e^{k\ln(1-\e)}}{1-e^{-k\e}} \Bigr)\,.
\end{split}
\end{equation}
We further approximate the first sum on the right-hand side of \eqref{gammato15} by an integral, using Euler- Maclaurin formula: setting $f(x)=\ln\bigl(\frac{1-e^{-\e x}}{\e x}\bigr)$, we have
\begin{align*}
\sum_{k=1}^n \ln\Bigl( \frac{1-e^{-k\e}}{k\e} \Bigr)
& = \int_0^n f(x)\de x + \frac12\bigl(f(n)-f(0)\bigr) + \int_0^n B_1(x-[x])f'(x)\de x \\
& = \int_0^n \ln\Bigl( \frac{1-e^{-\e x}}{\e x} \Bigr)\de x + \frac12\ln\Bigl( \frac{1-e^{-n\e}}{n\e} \Bigr) + \int_0^n B_1(x-[x])f'(x)\de x
\end{align*}
(here $B_1(x)=x-\frac12$ is the first Bernoulli polynomial, and $[x]$ denotes the integer part of $x$). Substituting this expression in \eqref{gammato15} we end up with
\begin{equation} \label{gammato16}
\begin{split}
S_n(\eta,\e)
& = (n+1)\ln(2\eta) + \frac{1}{\e}\int_0^{n\e} \ln\Bigl( \frac{1-e^{-t}}{t} \Bigr)\de t + \frac12\ln(1-e^{-n\e}) \\
& \qquad  - \frac32\ln\e - \ln(\sqrt{n}(n+1)) + R_n(\e)\,,
\end{split}
\end{equation}
where
\begin{equation} \label{gammato16b}
R_n(\e) := \sum_{k=1}^n \ln \Bigl(\frac{1-e^{k\ln(1-\e)}}{1-e^{-k\e}} \Bigr) + \int_0^{n\e} \bigl({\textstyle\frac{t}{\e}-[\frac{t}{\e}]-\frac12}\bigr) \frac{e^{-t} - 1 + t e^{-t}}{t(1-e^{-t})} \de t \,.
\end{equation}
By inserting \eqref{gammato16} into \eqref{gammato14} we obtain the following representation formula for $\Psi$:
\begin{equation} \label{gammato17}
\Psi\Bigl(\frac{\eta}{\e}\Bigr) = \frac{1}{2\e^{\frac32}}\sum_{n=0}^\infty \frac{\sqrt{1-e^{-n\e}}}{\sqrt{n}(n+1)} e^{\bar{S}_n(\eta,\e)} e^{R_n(\e)} \,,
\end{equation}
where we denoted by $\bar{S}_n(\eta,\e)$ the leading order part in \eqref{gammato16}, given by the first two terms:
\begin{equation*}
\bar{S}_n(\eta,\e) := (n+1)\ln(2\eta) + \frac{1}{\e}\int_0^{n\e} \ln\Bigl( \frac{1-e^{-t}}{t} \Bigr)\de t \,.
\end{equation*}

We now use Laplace's method (see for instance \cite{BenOrs99}) to find the leading behaviour of the sum \eqref{gammato17} as $\e\to0^+$. We have to identify the largest term in the series, and for this we compute the point $n_*$ at which the expression $\bar{S}_n(\eta,\e)$ is maximal (the other terms will result of lower order). We easily compute the derivative
\begin{equation*}
\frac{\partial\bar{S}_n(\eta,\e)}{\partial n} = \ln(2\eta) + \ln\Bigl(\frac{1-e^{-n\e}}{n\e}\Bigr) \,.
\end{equation*}
For $\eta>\frac12$, the equation $\frac{t}{1-e^{-t}}=2\eta$ has a unique positive root $t_*(\eta)$ and the maximum of the expression $\bar{S}_n(\eta,\e)$ is attained for $n_*=\frac{t_*(\eta)}{\e}$. For $n-n_*$ small we can approximate $\bar{S}_n$ by its Taylor expansion around the point $n_*$,
\begin{align} \label{gammato18}
\bar{S}_n(\eta,\e)
& \simeq \bar{S}_{n_*}(\eta,\e) + \frac12\frac{\partial^2\bar{S}_n(\eta,\e)}{\partial n^2}\bigg|_{n=n_*} \bigl( n -  n_* \bigr)^2 \nonumber \\
& = \ln(2\eta) + \frac{1}{\e}\biggl[ t_*\ln(2\eta) + \int_0^{t_*} \ln\bigl( \textstyle\frac{1-e^{-t}}{t}\bigr)\de t \biggr]
+ \displaystyle \frac{\e}{2t_*} \biggl( \frac{t_*e^{-t_*}}{1-e^{-t_*}}-1 \biggr) \bigl( n - n_* \bigr)^2 \nonumber \\
& = \ln(2\eta) + \frac{W(\eta)}{\e} - \e D(\eta) \bigl( n - \textstyle \frac{t_*(\eta)}{\e}\bigr)^2
\end{align}
where $W(\eta):=\int_0^{t_*(\eta)}\ln\bigl(\frac{2\eta(1-e^{-t})}{t}\bigr)\de t$ and $D(\eta):= - \frac{1}{2t_*} ( \frac{t_*e^{-t_*}}{1-e^{-t_*}}-1 )$ (notice that $W(\eta)$ and $D(\eta)$ are positive quantities by the definition of the point $t_*(\eta)$).

Using the principles of Laplace's method, to compute the asymptotics of the series \eqref{gammato17} as $\e\to0^+$ we can keep only the terms with $n\in[n_*(1-\sigma),n_*(1+\sigma)]$, where $\sigma>0$ is small. The errors that we make with this approximation are indeed subdominant (exponentially small) with respect to the leading order behaviour, which is $e^{\frac{W(\eta)}{\e}}$. To see this, first observe that for values of $n$ outside this interval the value of the leading part $\bar{S}_n$ differs from its maximum by a quantity of order $\frac{1}{\e}$, that is
\begin{equation} \label{gammato18a}
\bar{S}_n(\eta,\e) \leq \frac{W(\eta)}{\e}-\frac{\alpha}{\e} \qquad\text{for } |n-n_*|>\sigma n_*
\end{equation}
for some $\alpha>0$. Fix now a sufficiently large constant $N$ and consider first the terms with $n\leq\frac{N}{\e}$: for such terms one can check from \eqref{gammato16b} by elementary estimates that
\begin{equation} \label{gammato18b}
R_n(\e)\leq 2N \qquad\text{for } n\leq\frac{N}{\e}\,;
\end{equation}
therefore for such $n$'s in the series \eqref{gammato17} we have
\begin{align*}
\sum_{\stackrel{n\leq\frac{N}{\e}}{|n-n_*|>\sigma n_*}} \frac{\sqrt{1-e^{-n\e}}}{2\e^{\frac32}\sqrt{n}(n+1)} e^{\bar{S}_n(\eta,\e)} e^{R_n(\e)} 
\leq N\e^{-\frac53} e^{\frac{W(\eta)-\alpha}{\e}}e^{2N}
\end{align*}
and these terms do not contribute to the asymptotics of \eqref{gammato17} as $\e\to0$. Also the terms in the series with $n>\frac{N}{\e}$ can be neglected: indeed we can write by \eqref{gammato14}
\begin{align*}
S_n(\eta,\e)
&= S_{[\frac{N}{\e}]}(\eta,\e) + (n-[{\textstyle\frac{N}{\e}}])\ln\Bigl(\frac{2\eta}{\e}\Bigr) + \sum_{k=[\frac{N}{\e}]+1}^n \ln\bigl(1-(1-\e)^k\bigr) - \sum_{k=[\frac{N}{\e}]+2}^{n+1}\ln k \\
&\leq S_{[\frac{N}{\e}]}(\eta,\e) + \ln\Bigl(\frac{2\eta}{\e}\Bigr) + (n-[{\textstyle\frac{N}{\e}}]-1) \Bigl( \ln\Bigl(\frac{2\eta}{\e}\Bigr)-\ln\Bigl(\frac{N}{\e}\Bigr) \Bigr) \\
& = S_{[\frac{N}{\e}]}(\eta,\e) + \ln\Bigl(\frac{2\eta}{\e}\Bigr) - (n-[{\textstyle\frac{N}{\e}}]-1) \ln\Bigl(\frac{N}{2\eta}\Bigr)
\end{align*}
so that by \eqref{gammato18a} and \eqref{gammato18b}
\begin{equation*}
\sum_{n=[\frac{N}{\e}]+1}^\infty e^{S_n(\eta,\e)} \leq \frac{2\eta}{\e} e^{S_{[N/\e]}(\eta,\e)} \sum_{m=0}^\infty e^{-\ln(\frac{N}{2\eta})m}
\leq \frac{C\eta}{\e}e^{\frac{W(\eta)-\alpha}{\e}}e^{2N}
\end{equation*}
and also this term decays exponentially faster than the rest of the sum as $\e\to0$.

This shows that the asymptotics of \eqref{gammato17} is determined only by the terms of the series with $|n-n_*|\leq \sigma n_*$: for such values we can approximate $\bar{S}_n$ by its value at the maximum using \eqref{gammato18},
\begin{align*}
\Psi\Bigl(\frac{\eta}{\e}\Bigr)
& \sim \frac{1}{2\e^{\frac32}} \sum_{n=[n_*(1-\sigma)]}^{[n_*(1+\sigma)]} \frac{\sqrt{1-e^{-n\e}}}{\sqrt{n}(n+1)} e^{\bar{S}_n(\eta,\e)} e^{R_n(\e)} \\
& \sim \frac{\eta\sqrt{1-e^{-t_*(\eta)}}}{t_*(\eta)^{\frac32}} e^{R_{n_*}(\e)} e^{\frac{W(\eta)}{\e}} \sum_{n=[n_*(1-\sigma)]}^{[n_*(1+\sigma)]}  e^{-\e D(\eta)(n-n_*)^2}\,.
\end{align*}
We can now extend the region of summation to infinity without changing the asymptotics, and the resulting sum can be seen as a Riemann sum of a Gaussian integral:
\begin{align*}
\Psi\Bigl(\frac{\eta}{\e}\Bigr)
& \sim \frac{\eta\sqrt{1-e^{-t_*(\eta)}}}{t_*(\eta)^{\frac32}} e^{R_{n_*}(\e)} e^{\frac{W(\eta)}{\e}} \sum_{n=-\infty}^{\infty}  e^{-\e D(\eta)n^2} \\
& = \frac{\eta\sqrt{1-e^{-t_*(\eta)}}}{\e t_*(\eta)^{\frac32}} e^{R_{n_*}(\e)} e^{\frac{W(\eta)}{\e}} \sum_{n=-\infty}^{\infty}  e^{ -\frac{D(\eta)}{\e}(\e n)^2}\e \\
& \sim \frac{\eta\sqrt{1-e^{-t_*(\eta)}}}{\e t_*(\eta)^{\frac32}} e^{R_{n_*}(\e)} e^{\frac{W(\eta)}{\e}} \int_{-\infty}^{\infty} e^{ -\frac{D(\eta)}{\e}x^2}\de x \\
& = \frac{\eta\sqrt{\pi}\sqrt{1-e^{-t_*(\eta)}}}{\sqrt{\e} \sqrt{D(\eta)} t_*(\eta)^{\frac32}} e^{R_{n_*}(\e)} e^{\frac{W(\eta)}{\e}} \,.
\end{align*}
Summing up, we have obtained an asymptotics for the function $\Psi$ solving the linearized problem \eqref{gammato13} of the following form:
\begin{equation} \label{gammato19}
\Psi\Bigl(\frac{\eta}{\e}\Bigr) \sim \frac{U(\eta)}{\sqrt{\e}} e^{\frac{W(\eta)}{\e}} \qquad\text{as }\e\to0^+,
\end{equation}
where $\eta>\frac12$ is a parameter.

\bigskip

We now go back to the solution $H$ to the nonlinear problem \eqref{gammato11}. By \eqref{gammato19} we have
\begin{equation}\label{gammato110}
H(y) \sim 1 - \delta \Psi(y) \sim 1 - \frac{\delta}{\sqrt{\e}}U(\eta)e^{\frac{W(\eta)}{\e}}\,, \qquad y = \frac{\eta}{\e}\,.
\end{equation}
This asymptotic is valid until $\delta\Psi$ becomes of order 1. Suppose that the two parameters $\e$, $\delta$ appearing in \eqref{gammato11} satisfy a relation of the form
\begin{equation} \label{gammato111}
\delta = \frac{\sqrt{\e}}{U(\bar{\eta})} e^{-\frac{W(\bar{\eta})}{\e}}
\end{equation}
for some $\bar\eta>\frac12$ which now plays the role of a free parameter. Then for values of the variable $y$ close to the point $\bar{y}:=\frac{\bar{\eta}}{\e}$ we have by \eqref{gammato110}
\begin{align*}
H(y)
& \sim 1 - \frac{\delta}{\sqrt{\e}} U\bigl(\bar\eta + \e(y-\bar{y})\bigr) e^{\frac{W(\bar{\eta} + \e(y-\bar{y}))}{\e}} \\
& \sim 1 - \frac{\delta}{\sqrt{\e}} U(\bar{\eta}) e^{\frac{W(\bar{\eta})}{\e} + W'(\bar{\eta})(y-\bar{y})} \\
& = 1 - e^{W'(\bar{\eta})(y-\bar{y})}\,.
\end{align*}
Then in the transition region where $y\sim\bar{y}=\frac{\bar{\eta}}{\e}$ we can consider the translated function $h(z):=H(\bar{y}+z)$, whose behaviour is determined by the approximate equation
\begin{equation*}
h'(z) = - \bigl(h(z-\bar{\eta})\bigr)^2 + \bigl(h(z)\bigr)^2
\end{equation*}
(notice that we can neglect the term multiplied by the parameter $\delta$, which is exponentially small with respect to $\e$ by \eqref{gammato111}), with the matching condition
\begin{equation*}
h(z) \sim 1 - e^{W'(\bar{\eta})z} \qquad\text{as }z\to-\infty.
\end{equation*}
Notice that with the change of variables $\Phi(x)=h(\frac{\bar{\eta}\ln x}{\ln2})$ we obtain the equation
\begin{equation*}
\frac{\ln2}{\bar{\eta}} x \Phi'(x) = - \bigl(\Phi(x/2)\bigr)^2 + \bigl(\Phi(x)\bigr)^2\,,
\end{equation*}
which is nothing else but the equation for the self-similar profile in the limit case of homogeneity $\gamma=1$, see \eqref{gamma1a}.

\bigskip

We now want to study the transition from the region where $y\sim\bar{y}=\frac{\bar{\eta}}{\e}$ to the region $y\to\infty$. To this aim, we keep only the dominant terms in the equation, which are
\begin{equation} \label{gammato120}
H'(y) = - \bigl( H(y(1-\e)) \bigr)^2\,.
\end{equation}
We first consider the region where $1\ll y-\bar{y} \ll \frac{1}{\e}$; in this region we have $\e y \sim \e \bar{y}=\bar{\eta}$ and we approximate the equation by
\begin{equation} \label{gammato121}
H'(y) = - \bigl( H(y-\bar{\eta}) \bigr)^2\,.
\end{equation}
We can expect an asymptotics in the form
\begin{equation} \label{gammato122}
H(y) \sim K_0 e^{\sigma(y-\bar{y})} e^{-c_0e^{\sigma(y-\bar{y})}} \,.
\end{equation}
Indeed by inserting this ansatz in \eqref{gammato121} we obtain
\begin{equation*}
-K_0c_0\sigma e^{2\sigma(y-\bar{y})} e^{-c_0e^{\sigma(y-\bar{y})}} \sim - K_0^2 e^{-2\sigma\bar{\eta}} e^{2\sigma(y-\bar{y})} e^{-2c_0 e^{-\sigma\bar{\eta}}e^{\sigma(y-\bar{y})}}\,,
\end{equation*}
which yields the relations
\begin{equation*}
2e^{-\sigma\bar{\eta}} = 1, \qquad c_0\sigma = K_0 e^{-2\sigma\bar{\eta}}\,,
\end{equation*}
whence
\begin{equation} \label{gammato123}
\sigma = \frac{\ln2}{\bar{\eta}}, \qquad K_0 = \frac{4c_0\ln2}{\bar{\eta}}\,.
\end{equation}
The constant $c_0$ is the unique degree of freedom in the asymptotics \eqref{gammato122}.

We next compute the asymptotics for the full problem \eqref{gammato120} as $y\to\infty$: we expect an exponentially decaying solution and we hence make the ansatz
\begin{equation} \label{gammato124}
H(y) \sim K_1 y^\alpha e^{-c_1 y^{\beta}},
\end{equation}
which inserted in \eqref{gammato120} gives
\begin{equation*}
-K_1c_1\beta y^{\alpha+\beta-1}e^{-c_1y^\beta} \sim -K_1^2 (1-\e)^{2\alpha }y^{2\alpha} e^{-2c_1(1-\e)^\beta y^\beta},
\end{equation*}
from which we obtain the relations
\begin{equation*}
2(1-\e)^\beta = 1, \qquad \alpha+\beta-1=2\alpha, \qquad c_1\beta = K_1(1-\e)^{2\alpha},
\end{equation*}
whence
\begin{equation} \label{gammato125}
\beta = -\frac{\ln2}{\ln(1-\e)} \sim \frac{\ln2}{\e}, \qquad \alpha = \beta-1 \sim \frac{\ln2}{\e}, \qquad K_1 = 4c_1\beta(1-\e)^2 \sim\frac{4c_1\ln2}{\e}\,.
\end{equation}
Again the only degree of freedom of the solution is the constant $c_1$.

We finally examine the transition from the first asymptotics \eqref{gammato122} to the second \eqref{gammato124}, and we show that the two asymptotics can be connected: indeed, in the region where the approximation \eqref{gammato121} is valid, that is for $1\ll y-\bar{y} \ll \frac{1}{\e}$, we can express \eqref{gammato124} in the following form, by writing $y=\bar{y} + (y-\bar{y})$:
\begin{align*}
K_1 y^\alpha e^{-c_1 y^{\beta}}
& = K_1 \bigl(\bar{y}+(y-\bar{y})\bigr)^\alpha e^{-c_1 (\bar{y}+(y-\bar{y}))^{\beta}} \\
& = K_1 \bar{y}^\alpha \Bigl( 1 + \frac{y-\bar{y}}{\bar{y}}\Bigr)^\alpha \exp\Bigl(-c_1\bar{y}^\beta\Bigl(1+\frac{y-\bar{y}}{\bar{y}}\Bigr)^\beta\Bigr) \\
& \approx K_1\Bigl(\frac{\bar{\eta}}{\e}\Bigr)^\alpha e^{\frac{\alpha}{\bar{y}}(y-\bar{y})} \exp\Bigl( -c_1\Bigl(\frac{\bar{\eta}}{\e}\Bigr)^\beta e^{\frac{\beta}{\bar{y}}(y-\bar{y})} \Bigr) \,.
\end{align*}
Here we used the approximations $(1+\frac{y-\bar{y}}{\bar{y}})^\alpha\approx e^{\frac{\alpha}{\bar{y}}(y-\bar{y})}$, $(1+\frac{y-\bar{y}}{\bar{y}})^\beta\approx e^{\frac{\beta}{\bar{y}}(y-\bar{y})}$, which follow from the fact that $\bar{y}=\frac{\bar{\eta}}{\e}$ and that both $\alpha$ and $\beta$ are of order $\frac{1}{\e}$. Moreover, as $\alpha=\beta-1=\frac{\ln2}{\e}(1+O(\e))$, in the region where $\e(y-\bar{y})$ is small we also have
$$
e^{\frac{\alpha}{\bar{y}}(y-\bar{y})}=e^{\frac{\beta}{\bar{y}}(y-\bar{y})}(1+O(\e)) = e^{\frac{\ln2}{\bar{\eta}}(y-\bar{y})}(1+O(\e)).
$$
Therefore
\begin{align} \label{gammato126}
K_1 y^\alpha e^{-c_1 y^{\beta}}
& \approx K_1\Bigl(\frac{\bar{\eta}}{\e}\Bigr)^\alpha e^{\frac{\ln2}{\bar{\eta}}(y-\bar{y})} \exp\Bigl( -c_1\Bigl(\frac{\bar{\eta}}{\e}\Bigr)^\beta e^{\frac{\ln2}{\bar{\eta}}(y-\bar{y})} \Bigr) \,.
\end{align}
By comparing the right-hand side of \eqref{gammato126} with the first asymptotics \eqref{gammato122}, and recalling that $\sigma=\frac{\ln2}{\bar{\eta}}$, we see that the two expressions agree, up to identifying 
\begin{equation} \label{gammato127}
K_0 = K_1 \Bigl(\frac{\bar{\eta}}{\e}\Bigr)^\alpha\,, \qquad c_0 = c_1 \Bigl(\frac{\bar{\eta}}{\e}\Bigr)^\beta\,.
\end{equation}
We only need to check that the two conditions $K_0=\frac{4c_0\ln2}{\bar{\eta}}$ and $K_1\sim\frac{4c_1\ln2}{\e}$, obtained in \eqref{gammato123} and \eqref{gammato125} respectively, are valid with the positions \eqref{gammato127}: indeed by using the first two conditions in \eqref{gammato125} we obtain
\begin{align*}
K_1 = K_0\Bigl(\frac{\e}{\bar{\eta}}\Bigr)^\alpha = \frac{4c_0\ln2}{\bar{\eta}}\Bigl(\frac{\e}{\bar{\eta}}\Bigr)^\alpha = \frac{4c_1\ln2}{\bar{\eta}}\Bigl(\frac{\e}{\bar{\eta}}\Bigr)^{\alpha-\beta}
= \frac{4c_1\ln2}{\e}\,,
\end{align*}
and the two asymptotics are therefore equivalent, with the value of $c_1$ obtained from the value of $c_0$ by means of $c_1=c_0(\e/\bar{\eta})^\beta$.

\bigskip

In conclusion, for small $\e$ there is a critical value of the parameter $\delta$, which is exponentially small in $\e$ and is precisely given by the relation \eqref{gammato111}, such that the asymptotics of the solution $H$ to \eqref{gammato11} for large values $y\to\infty$ is given by \eqref{gammato124}.


\section{Rigorous proof in the case \texorpdfstring{\boldmath$\gamma\to\infty$}{of large homogeneity}} \label{sect:gammatoinf}

The existence of a critical value of the parameter $b$ for which there is a positive solution to \eqref{equation} with exponential decay as $x\to\infty$ can be proved rigorously for large values of the homogeneity $\gamma$. The main result of this section is the following.

\begin{theorem}[The case $\gamma\to\infty$] \label{thm:gammatoinf}
	There exists $\bar{\gamma}>1$ with the following property. For every $\gamma>\bar{\gamma}$ there exist $\bar{b}(\gamma)>0$ and a positive solution $\Phi$ to \eqref{equation} with exponential decay.
\end{theorem}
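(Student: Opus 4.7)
The plan is to apply the Implicit Function Theorem around the explicit solution of the limit problem at $\gamma=\infty$, as foreshadowed in Section~\ref{subsect:shooting}. After the rescaling $\phi(x) := \sigma x^{-1/b}\Phi(x/\sigma^b)$ with $\mu := 1/\sigma$, the problem becomes
\begin{equation*}
\mathcal{F}(\phi, b, \mu)(x) := bx\phi'(x) + x^{1/b}\phi(x/2)^2 - \mu x^{1/b}\phi(x)^2 = 0, \qquad \phi(0) = 1,
\end{equation*}
with the decay requirement $\phi \to 0$ exponentially at infinity. In the regime $\gamma \to \infty$ one has $\mu \to 0$, and at $(\mu, b) = (0, 1)$ the explicit positive exponentially decaying solution is $\phi_0(x) = e^{-x}$. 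One writes $\phi = e^{-x} + \psi$, $b = 1 + \beta$, and works in a Banach space such as
\begin{equation*}
X := \Bigl\{\psi \in C^1([0, \infty)) \,:\, \psi(0) = 0,\ \sup_{x \geq 0}e^x\bigl(|\psi(x)| + (1+x)^{-1}|\psi'(x)|\bigr) < \infty\Bigr\},
\end{equation*}
paired with $\beta \in \R$; the polynomial slack in the weight is included to accommodate the $\ln x$ factor that appears in the $b$-derivative.

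The technical heart of the argument is the invertibility of the linearization
\begin{equation*}
L[\psi, \beta](x) = x\psi'(x) + 2xe^{-x/2}\psi(x/2) - \beta x e^{-x}(1 + \ln x).
\end{equation*}
Dividing by $x$ and integrating from $0$ with $\psi(0) = 0$, the equation $L[\psi, \beta] = g$ is equivalent to the Volterra-type identity
\begin{equation*}
\psi(x) = -4\int_0^{x/2} e^{-t}\psi(t)\de t + \beta \int_0^x e^{-s}(1+\ln s)\de s + \int_0^x \frac{g(s)}{s}\de s.
\end{equation*}
Imposing the decay $\psi(\infty) = 0$ produces exactly one scalar solvability condition, which can always be solved for $\beta$ thanks to $\int_0^\infty e^{-s}(1+\ln s)\de s = 1 - \gamma_E \neq 0$ ($\gamma_E$ being Euler's constant). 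For injectivity, the homogeneous case $\beta = 0$ reduces, by a power-series argument at $x = 0$, to the single kernel element $(1 - x)e^{-x}$ of the reduced operator $\psi \mapsto x\psi' + 2xe^{-x/2}\psi(x/2)$ (the infinitesimal generator of the scaling symmetry $\phi \mapsto \lambda\phi(\lambda x)$ of the limit equation), which is excluded by $\psi(0) = 0$; the case $\beta \neq 0$ is ruled out by verifying that the unique Volterra solution with $\psi(0)=0$ and source $xe^{-x}(1+\ln x)$ has a nonzero limit at infinity. Surjectivity then follows from a Fredholm index count: the rewritten equation is identity-plus-compact on $X$, and the one-dimensional cokernel coming from the obstruction $\psi \mapsto \psi(\infty)$ is exactly matched by the one-dimensional $\beta$ coordinate.

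Granted the invertibility of $L : X \times \R \to Y$, the Implicit Function Theorem supplies a unique $(\phi(\mu), b(\mu))$ near $(\phi_0, 1)$ solving $\mathcal{F}(\phi, b, \mu) = 0$ for all $\mu$ small, hence for all $\gamma$ large. The exponential decay of $\phi - e^{-x}$ is built into $X$, and positivity is immediate: $|\psi(x)| \leq C\mu\, e^{-x}$ with $C\mu$ small, so $\phi(x) \geq (1 - C\mu)e^{-x} > 0$ for every $x \geq 0$. Reversing the rescaling $\Phi(y) = y^{1/b}\phi(\sigma^b y)$ yields a positive $\Phi$ with exponential decay, as required. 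I expect the main obstacle to be the spectral analysis of $L$: in particular, verifying the asserted Fredholm structure while handling the multiplicative delay $\psi(x/2)$, the logarithmic source coming from $\partial_b$, and the non-differentiability at $x = 0$ of $x^{1/b}$ with respect to $b$ (which forces a careful choice of weighted spaces to make $\mathcal{F}$ genuinely $C^1$), as well as ruling out the accidental cancellation that would make the $\beta$-driven Volterra solution itself lie in $X$.
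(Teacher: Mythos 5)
Your overall strategy is the same as the paper's: rescale so that the $\gamma=\infty$, $b=1$ limit has the explicit solution $e^{-x}$, perturb via an Implicit Function Theorem, and match the one scalar obstruction (decay at infinity) with the one free parameter $b$. The paper (Section~\ref{sect:gammatoinf}) is a careful implementation of exactly this idea. But there are several concrete points where what you have written down is either incorrect or leaves the genuinely hard step undone.

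\textbf{Choice of variables.} You work in the $\phi$-picture, where the linearization involves $\partial_b(x^{1/b})=-b^{-2}x^{1/b}\ln x$ and the coefficient $b$ in front of $x\phi'$. The paper instead works in the $h$-picture (essentially $H(y)$ with $y=x^{1/b}$, rescaled by $\sigma$), in which the $b$-dependence is entirely through the factor $\frac{1}{2}(1+\e)$ in the delay argument, so no $\ln x$ appears in the linearization and the parameter dependence is manifestly smooth. This is not a cosmetic difference: the $\ln x$ source and the non-smoothness of $x^{1/b}$ at $x=0$ as a function of $b$ make the $C^1$ regularity of your $\mathcal{F}$ in the $b$-direction a real difficulty, which you flag but never resolve. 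The paper avoids it by design.

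\textbf{The Fredholm/compactness claim is wrong as stated.} You assert the rewritten equation $\psi = V[\psi] + \beta g_1 + g_2$, with $V[\psi](x) = -4\int_0^{x/2}e^{-t}\psi(t)\,\mathrm{d}t$, is ``identity plus compact on $X$.'' It is not: if $|\psi(x)|\leq Ce^{-x}$, then $V[\psi](x)\to -4\int_0^\infty e^{-t}\psi(t)\,\mathrm{d}t$ as $x\to\infty$, which is a nonzero constant in general — so $V$ does not even map $X$ into $X$. You must enlarge the codomain to allow nonzero constants at infinity (which is exactly what the paper does: the fundamental solution of the linear delay equation is decomposed as $G(x,\xi)=e^xQ(\xi)+\widetilde G(x,\xi)$ in Lemma~\ref{lem:linearproblem}, with $e^xQ$ the non-decaying mode, and the quantity $F(W,\e,\eta)=\int_0^\infty e^\xi Q(\xi)R[W]\,\mathrm{d}\xi$ in \eqref{gammatoinf2} is precisely the obstruction functional). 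Once this is set up correctly, your ``index count'' idea is fine, but the machinery has to be built; the version you wrote would not compile.

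\textbf{The key transversality condition is not verified.} Your argument hinges on the claim that ``the unique Volterra solution with $\psi(0)=0$ and source $xe^{-x}(1+\ln x)$ has a nonzero limit at infinity.'' This is the entire content of the argument — the analogue of the paper's $\frac{\partial f}{\partial\e}(0,0)>0$ — and you neither prove it nor say why it should be true. The paper computes $\frac{\partial f}{\partial\e}(0,0)=\int_0^\infty \xi Q(\xi)\,\mathrm{d}\xi$ and shows it is positive via the alternating-series structure of $Q$ (see \eqref{oscill5}); it also checks $\frac{\partial f}{\partial\eta}(0,0)=\int_0^\infty e^{-\xi}Q(\xi)\,\mathrm{d}\xi<0$, which you would need as well. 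The number $1-\gamma_E$ you compute is only the leading source term, not the obstruction coefficient.

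\textbf{Two further structural differences.} The paper first constructs $W(\cdot;\e,\eta)$ for all small $(\e,\eta)$ by a Banach fixed point (Lemma~\ref{lem:gammatoinfFP}) in a space of exponentially decaying functions, proves $C^1$ dependence on $(\e,\eta)$ by a delicate difference-quotient argument (Lemma~\ref{lem:gammatoinfIFT}, Steps~1--2), and only then applies an IFT to the scalar equation $F=0$; you invert the joint $(\psi,\beta)$-linearization in one shot, which is a valid alternative but requires the Fredholm apparatus above to be set up correctly. Also, the paper needs a \emph{one-sided} IFT (Lemma~\ref{lem:one-sideIFT}) because the parameters $\e,\eta$ live in $[0,\infty)$; you do not address why your $(\beta,\mu)$ can be of either sign.

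In short: right idea, right counting, but the compactness claim is false, the nonvanishing coefficient that the whole proof rests on is asserted without proof, and working in the $\phi$-variables creates a logarithmic singularity that the paper's change of variables deliberately avoids.
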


In order to prove the theorem, we again consider the formulation \eqref{equation2} of the equation and we rescale the solution by the parameter $\sigma$, as was done in Section~\ref{subsect:oscill}: by setting $h(x):= H(\frac{x}{\sigma})$, we have that $h$ solves
\begin{equation} \label{gammatoinf0}
\begin{cases}
h'(x) = -\bigl(h\bigl(\frac{x}{2}(1+\e)\bigr)\bigr)^2 + \eta\bigl(h(x)\bigr)^2\,,\\
h(0)=1,
\end{cases}
\end{equation}
where we also introduced two small parameters $\e$, $\eta$ given by
\begin{equation} \label{parameters2}
2^\frac1b=\frac{2}{1+\e}, \qquad \eta = \frac{1}{\sigma} = 2^{\frac{2}{b}+1-\gamma}.
\end{equation}
We will prove Theorem~\ref{thm:gammatoinf} by showing that for every $\eta>0$ sufficiently small, there is a unique value of the parameter $\e=\e(\eta)>0$ such that the corresponding solution $h(x;\e(\eta),\eta)$ to \eqref{gammatoinf0} satisfies
\begin{equation*}
h(x;\e(\eta),\eta)>0, \qquad \liminf_{x\to\infty} \Bigl(x\,h(x;\e(\eta),\eta)\Bigr)=0
\end{equation*}
(and actually decays exponentially fast).

\begin{remark}
	One can get an insight into the behaviour of the solution to \eqref{gammatoinf0} by looking at the limit problem for $\eta=0$, which has been discussed in detail in Section~\ref{subsect:oscill}. For $\e=0$ the problem has the explicit solution $\bar{h}(x)=e^{-x}$. Moreover, the solutions corresponding to $\e<0$ cross to negative values, while for $\e>0$ one has $\liminf_{x\to\infty} \bigl(x\,h(x;\e(\eta),\eta)\bigr)>0$.
\end{remark}

The same picture outlined in the previous remark should remain valid also for small values of $\eta$. The strategy of the proof is the following: we look for a solution for two given values $\eta$, $\e$ of the parameters in the form
\begin{equation} \label{gammatoinf0b}
h(x;\e,\eta)=\bar{h}(x)+W(x;\e,\eta)\,,
\end{equation}
where $\bar{h}(x)=e^{-x}$ is the explicit solution corresponding to $\eta=\e=0$. By substituting the ansatz \eqref{gammatoinf0b} in \eqref{gammatoinf0} we find that $W$ satisfies
\begin{align*}
W_x(x;\e,\eta) = -2e^{-\frac{x}{2}}W(\textstyle\frac{x}{2};\e,\eta) + R[W](x;\e,\eta), \qquad W(0;\e,\eta)=0,
\end{align*}
where
\begin{equation} \label{gammatoinf3bis}
\begin{split}
R[W](x;\e,\eta) &:= e^{-x}-e^{-x(1+\e)} + 2e^{-\frac{x}{2}}W(\textstyle\frac{x}{2};\e,\eta) - 2e^{-\frac{x}{2}(1+\e)}W(\textstyle\frac{x}{2}(1+\e);\e,\eta) \\
& \qquad - \bigl(W(\textstyle\frac{x}{2}(1+\e);\e,\eta)\bigr)^2 + \eta\bigl(e^{-x}+W(x;\e,\eta)\bigr)^2\,.
\end{split}
\end{equation}
The function $\vphi(x;\e,\eta)=e^xW(x;\e,\eta)$ therefore solves the linear problem
\begin{equation*}
\vphi_x(x;\e,\eta) = \vphi(x;\e,\eta) - 2\vphi(\textstyle\frac{x}{2};\e,\eta) + e^{x}R[W](x;\e,\eta)\,, \qquad x>0,
\end{equation*}
with $\vphi(0;\e,\eta)=0$. This delay equation, where the nonlinearity $R[W]$ is treated as a source term, is studied in details in Section~\ref{sect:linear}; we can write a representation formula for $\vphi$ by using the fundamental solution obtained in Lemma~\ref{lem:linearproblem}:
\begin{equation*}
\begin{split}
W(x;\e,\eta) &= e^{-x}\vphi(x;\e,\eta) = e^{-x}\int_0^x G(x,\xi)e^\xi R[W](\xi;\e,\eta)\de\xi \\
& = \int_0^x e^\xi Q(\xi) R[W](\xi;\e,\eta)\de\xi + \int_0^x e^{\xi-x} \widetilde{G}(x,\xi) R[W](\xi;\e,\eta)\de \xi\,.
\end{split}
\end{equation*}
If we introduce the quantity
\begin{equation} \label{gammatoinf2}
F(W,\e,\eta) := \int_0^\infty e^\xi Q(\xi) R[W](\xi;\e,\eta)\de\xi
\end{equation}
we can rewrite the equation for $W$ as follows:
\begin{equation} \label{gammatoinf1}
W(x;\e,\eta) = F(W,\e,\eta) -\int_x^\infty e^{\xi} Q(\xi) R[W](\xi;\e,\eta)\de\xi  + \int_0^x e^{\xi-x} \widetilde{G}(x,\xi) R[W](\xi;\e,\eta)\de \xi\,.
\end{equation}
The value $F(W,\e,\eta)$ represents the constant value of $W$ at infinity. The goal is now to show that for every sufficiently small value of the parameters $\e$ and $\eta$ there is a unique solution to the equation
\begin{equation} \label{gammatoinf3}
W(x;\e,\eta) = - \int_x^\infty e^{\xi} Q(\xi) R[W](\xi;\e,\eta)\de\xi  + \int_0^x e^{\xi-x} \widetilde{G}(x,\xi) R[W](\xi;\e,\eta)\de \xi\,,
\end{equation}
which will be obtained by means of a fixed point argument (Lemma~\ref{lem:gammatoinfFP}), and furthermore to prove that for every $\eta$ small enough we can find a unique $\e=\e(\eta)$ such that the corresponding solution satisfies $F(W(\cdot;\e(\eta),\eta),\e(\eta),\eta)=0$ and is positive (Lemma~\ref{lem:gammatoinfIFT}).

\begin{lemma}[Existence by fixed point] \label{lem:gammatoinfFP}
	There exist $\e_0>0$ and $\eta_0>0$ such that for every $\e\in[0,\e_0]$ and $\eta\in[0,\eta_0]$ the equation \eqref{gammatoinf3} has a unique solution $W(\cdot;\e,\eta)\in C^0([0,\infty))$ with
	\begin{equation} \label{gammatoinf4bis}
	|W(x;\e,\eta)| \leq (\e+\eta) M e^{-\delta x}
	\end{equation}
	for constants $\delta>0$ and $M>0$ independent of $\e$ and $\eta$.
\end{lemma}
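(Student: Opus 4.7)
The plan is to apply the Banach contraction mapping theorem to the operator $T$ defined by the right-hand side of \eqref{gammatoinf3}, acting on a suitable ball of exponentially decaying functions. The natural framework is the weighted $C^1$ space
\[
X_\delta := \bigl\{W \in C^1([0,\infty)) : \|W\|_\delta := \sup_{x \geq 0} e^{\delta x}\bigl(|W(x)| + |W'(x)|\bigr) < \infty\bigr\}
\]
for some fixed $\delta \in (0,\tfrac12)$, in which the closed ball $B_M := \{W \in X_\delta : \|W\|_\delta \leq (\e+\eta)M\}$ will be stable under $T$ for an appropriately large constant $M$ and all sufficiently small $\e,\eta$.

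The structural observation driving the argument is that the nonlinearity in \eqref{gammatoinf3bis} decomposes as $R[W] = S + L[W] + N[W]$, with pure source $S(x) = (e^{-x} - e^{-x(1+\e)}) + \eta e^{-2x}$ of order $\e+\eta$ with exponential decay, linear part
\[
L[W](x) = 2\bigl[e^{-x/2}W(x/2) - e^{-x(1+\e)/2}W(x(1+\e)/2)\bigr] + 2\eta e^{-x}W(x),
\]
and quadratic remainder $N[W] = -(W(x(1+\e)/2))^2 + \eta(W(x))^2$. Crucially $L[W]$ \emph{vanishes identically at $\e=\eta=0$}: exploiting the $C^1$-regularity encoded in the norm of $X_\delta$, the difference term can be written as $-\e\int_1^{1+\e}\partial_s[e^{-xs/2}W(xs/2)]\de s$, yielding $|L[W](x)| \leq C(\e+\eta)\|W\|_\delta(1+x)e^{-(1+\delta)x/2}$, which is absorbed into $C'(\e+\eta)\|W\|_\delta e^{-\delta'x}$ for any $\delta'\in(\delta,\tfrac{1+\delta}{2})$. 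Combining the three contributions gives $|R[W](x)| \leq C(\e+\eta)[1+\|W\|_\delta]e^{-\delta'x} + \|W\|_\delta^2 e^{-2\delta x}$.

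Plugging these estimates into \eqref{gammatoinf3} and invoking the decay bounds for $Q$ and $\widetilde G$ established in Section~\ref{sect:linear} (of the form $|e^\xi Q(\xi)|\leq C_0$ and $|e^{\xi-x}\widetilde G(x,\xi)|\leq C_0 e^{-\mu(x-\xi)}$ for some $\mu>0$) yields
\[
\sup_x e^{\delta x}|T[W](x)| \leq C''(\e+\eta)\bigl[1+(\e+\eta)M+(\e+\eta)M^2\bigr];
\]
choosing $M$ so that $C''\leq M/2$ and then $\e+\eta$ small, the self-map property at the $C^0$ level follows. The derivative bound is recovered from the identity $W'(x) = -2e^{-x/2}W(x/2) + R[W_0](x) - 2e^{-x/2}F(W_0,\e,\eta)$ satisfied by $W=T[W_0]$ (obtained by differentiating the integral representation, using the boundary identity $G(x,x)=1$ for the fundamental solution), which preserves the weighted sup-norm. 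An entirely analogous calculation shows that $W\mapsto R[W]$ is Lipschitz with constant $O(\e+\eta)$ on $B_M$, so $T$ contracts with ratio $\leq\tfrac12$ for $\e+\eta$ small. The Banach fixed point theorem then delivers the unique solution satisfying \eqref{gammatoinf4bis}.

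The main obstacle is obtaining the factor $\e$ in the bound on $e^{-x/2}W(x/2) - e^{-x(1+\e)/2}W(x(1+\e)/2)$: in a pure weighted $C^0$ setting this difference can only be bounded by the sum of its terms, of order $\|W\|_\delta$, so that $L$ is no longer small and the contraction argument fails. The choice of a $C^1$-weighted space, together with the verification that $T$ preserves this regularity (which leans simultaneously on the integral representation and on the associated delay ODE for $W$), is the technical crux of the proof.
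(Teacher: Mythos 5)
Your proposal is correct, but it takes a genuinely different route from the paper. The paper stays entirely in a weighted \emph{$C^0$} space: it splits $R_{\e,\eta}[W]$ into a part $R^1$ with no delay difference (the source and quadratic terms) and the delay-difference part $R^2_{\e,\eta}[W](\xi) = 2e^{-\xi/2}W(\xi/2) - 2e^{-\xi(1+\e)/2}W(\xi(1+\e)/2)$, and instead of estimating $R^2$ pointwise it estimates it \emph{inside} the integrals against $e^\xi Q(\xi)$ and $e^{\xi-x}\widetilde G(x,\xi)$. A change of variables $\xi \mapsto \xi/(1+\e)$ in the second half of $R^2$ aligns the arguments of $W$ and shifts the $O(\e)$ difference onto the smooth kernels $Q$, $\widetilde G$; the factor of $\e$ then comes from the Lipschitz bounds on the kernels proved in Lemma~\ref{lem:linearproblem} (namely \eqref{linearbounds1}--\eqref{linearbounds2}), plus a residual integral over a short interval of length $O(\e x)$. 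You instead work in a $C^1$-weighted space and Taylor-expand the delay step in $W$ itself, extracting the factor $\e$ from the derivative bound on $W$; this then obliges you to verify that $T$ preserves $C^1$ regularity, which you do via the ODE $W'(x)=-2e^{-x/2}W(x/2)+R[W_0](x)-2e^{-x/2}F(W_0,\e,\eta)$ satisfied by $W=T[W_0]$ --- a correct identity, though it introduces a mild bootstrap (the estimate of $W'$ uses the already-established bound on $W$). Both routes yield the same quantitative conclusion; the paper's version is leaner because it leans on the regularity of the Green's function rather than of the unknown, and it only recovers $C^1$ regularity of the fixed point a posteriori in Remark~\ref{rm:gammatoinfC1}. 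One small inaccuracy in your closing remark: the claim that \emph{in a pure weighted $C^0$ setting the delay difference can only be bounded by the sum of its terms} is false at the level of the integral equation --- the paper's kernel-shift trick demonstrates that the $C^0$ framework does work; the $C^1$ machinery is a sufficient but not necessary device.
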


\begin{proof}
	Let $\delta\in(0,\beta)$, where $\beta\in(0,\frac12)$ is as in Lemma~\ref{lem:linearproblem}, and let $M>1$ be a fixed constant, to be chosen later independently of $\e$ and $\eta$ (possibly depending on $\delta$). We can assume without loss of generality $(\e+\eta)M^2\leq1$. We introduce the space
	\begin{equation*}
	X_{\e,\eta} := \Bigl\{ W\in C^0([0,\infty)) \;:\; |W(x)| \leq (\e+\eta) M e^{-\delta x} \Bigr\}
	\end{equation*}
	with norm
	\begin{equation*}
	\|W\| := \sup_{x\geq0} \frac{|W(x)|}{e^{-\delta x}}\,.
	\end{equation*}
	We define an operator $T_{\e,\eta}$ acting on $W\in X_{\e,\eta}$ by setting
	\begin{align} \label{gammatoinf4}
	T_{\e,\eta}[W](x) := -\int_x^\infty e^\xi Q(\xi)R_{\e,\eta}[W](\xi)\de \xi + \int_0^x e^{\xi-x} \widetilde{G}(x,\xi) R_{\e,\eta}[W](\xi)\de \xi\,,
	\end{align}
	where
	\begin{equation} \label{gammatoinf5}
	\begin{split}
	R_{\e,\eta}[W](x) &:= e^{-x}-e^{-x(1+\e)} + 2e^{-\frac{x}{2}}W(\textstyle\frac{x}{2}) - 2e^{-\frac{x}{2}(1+\e)}W(\textstyle\frac{x}{2}(1+\e)) \\
	& \qquad - \bigl(W(\textstyle\frac{x}{2}(1+\e))\bigr)^2 + \eta\bigl(e^{-x}+W(x)\bigr)^2\,.
	\end{split}
	\end{equation}
	We claim that for $\e$ and $\eta$ sufficiently small $T_{\e,\eta}$ is a contraction on $X_{\e,\eta}$, so that Banach's Fixed Point Theorem yields the existence of a unique $W\in X_{\e,\eta}$ satisfying $T_{\e,\eta}[W]=W$, which is equivalent to \eqref{gammatoinf3}.
	
	We first show that $T_{\e,\eta}$ maps $X_{\e,\eta}$ into itself. It is convenient to split the term $R_{\e,\eta}$ into two parts, which will be treated separately:
	\begin{align*}
	R_{\e,\eta}^1[W](x) &:= e^{-x}-e^{-x(1+\e)} - \bigl(W(\textstyle\frac{x}{2}(1+\e))\bigr)^2 + \eta\bigl(e^{-x}+W(x)\bigr)^2\,, \\
	R_{\e,\eta}^2[W](x) &:= 2e^{-\frac{x}{2}}W(\textstyle\frac{x}{2}) - 2e^{-\frac{x}{2}(1+\e)}W(\textstyle\frac{x}{2}(1+\e))\,.
	\end{align*}
	Given any $W\in X_{\e,\eta}$, we estimate the quantity $R_{\e,\eta}^1[W]$ by means of the bound in the definition of the space $X_{\e,\eta}$:
	\begin{align} \label{gammatoinf5b}
	|R_{\e,\eta}^1[W](x)|
	& \leq \e xe^{-x} + |W(\textstyle\frac{x}{2}(1+\e))|^2 + 2\eta e^{-2x} + 2\eta |W(x)|^2 \nonumber\\
	& \leq \e x e^{-x} + (\e+\eta)^2M^2 e^{-\delta x(1+\e)} + 2\eta e^{-2x} + 2\eta(\e+\eta)^2M^2e^{-2\delta x} \nonumber\\
	& \leq C_1 (\e+\eta) e^{- \delta x}
	\end{align}
	for a numerical constant $C_1>0$, independent of $\e$, $\eta$, $\delta$, and $M$ (here we also used the assumption $(\e+\eta)M^2\leq1$).
	By plugging this estimate into the definition \eqref{gammatoinf4} of $T_{\e,\eta}$ and using the bounds \eqref{linearbounds1}--\eqref{linearbounds2} we get
	\begin{align}  \label{gammatoinf6}
	|T_{\e,\eta}^1[W](x)|
	& = \bigg| - \int_x^\infty e^\xi Q(\xi)R_{\e,\eta}^1[W](\xi)\de \xi + \int_0^x e^{\xi-x} \widetilde{G}(x,\xi) R_{\e,\eta}^1[W](\xi)\de \xi \bigg| \nonumber \\
	& \leq (\e+\eta)C_0 C_1 \biggl( \int_x^\infty e^{- \delta \xi}\de\xi + \int_0^x e^{\beta(\xi-x)} e^{- \delta\xi}\de\xi \biggr) \nonumber \\
	& \leq (\e+\eta)C_0 C_1 \Bigl( \frac{1}{\delta}+\frac{1}{\beta-\delta} \Bigr) e^{-\delta x}\,.
	\end{align}
	We now prove a similar bound on the part of the operator involving $R_{\e,\eta}^2[W]$. By a change of variable we have
	\begin{align} \label{gammatoinf7}
	\bigg| \int_x^\infty e^\xi Q(\xi)R^2_{\e,\eta}[W](\xi)\de \xi \bigg|
	& \leq 2\int_x^\infty \Big| e^\xi Q(\xi) - \textstyle\frac{1}{1+\e}e^{\frac{\xi}{1+\e}} Q(\frac{\xi}{1+\e}) \Big| e^{-\frac{\xi}{2}}|W(\frac{\xi}{2})|\de\xi \nonumber\\
	& \qquad + \frac{2}{1+\e}\int_{x}^{x(1+\e)} e^{\frac{\xi}{1+\e}} \textstyle |Q(\frac{\xi}{1+\e})| e^{-\frac{\xi}{2}}|W(\frac{\xi}{2})|\de\xi \nonumber\\
	& \leq 2(\e+\eta)M C_0 \biggl( 2\e \int_x^\infty e^{-\frac{\xi}{2}}e^{-\frac12\delta \xi}\de\xi + \int_{x}^{x(1+\e)} e^{-\frac{\xi}{2}}e^{-\frac12\delta \xi}\de\xi \biggr) \nonumber \\
	& \leq \e (\e+\eta)M C_0 C_1 \bigl(1+x) e^{-(\frac12+\frac{\delta}{2})x}
	\leq \e C_0 C_1 e^{-\delta x}
	\end{align}
	where the second inequality follows by using \eqref{linearbounds1} and the bound on $W$ given by the definition of the space $X_{\e,\eta}$, and $C_1$ is a numerical constant as before (possibly different). Similarly, using \eqref{linearbounds2},
	\begin{align} \label{gammatoinf8}
	\bigg| \int_0^x e^{\xi-x}\widetilde{G}(x,\xi) R^2_{\e,\eta}[W](\xi)\de \xi \bigg|
	& \leq 2e^{-x}\int_0^x \Big| e^\xi \widetilde{G}(x,\xi) - \textstyle\frac{1}{1+\e}e^{\frac{\xi}{1+\e}} \widetilde{G}(x,\frac{\xi}{1+\e}) \Big| e^{-\frac{\xi}{2}}|W(\frac{\xi}{2})|\de\xi \nonumber\\
	& \qquad + \frac{2e^{-x}}{1+\e}\int_{x}^{x(1+\e)} e^{\frac{\xi}{1+\e}} \textstyle |\widetilde{G}(x,\frac{\xi}{1+\e})| e^{-\frac{\xi}{2}}|W(\frac{\xi}{2})|\de\xi \nonumber\\
	& \leq 2(\e+\eta)M C_0 \biggl( \e \int_0^x (1+2\xi)e^{\beta(\xi-x)}e^{-\frac{\xi}{2}}e^{-\frac{\delta \xi}{2}}\de\xi \nonumber\\
	& \qquad \qquad + \int_{x}^{x(1+\e)} e^{\beta(\xi-x)}e^{-\frac{\xi}{2}}e^{-\frac{\delta \xi}{2}}\de\xi \biggr) \nonumber \\
	& \leq 2 C_0 \biggl( 10\e\int_0^x e^{\beta(\xi-x)}e^{-\delta \xi}\de\xi + \e x e^{\e\beta x}e^{-\frac{x}{2}}e^{-\frac{\delta x}{2}} \biggr) \nonumber \\
	& \leq 2\e C_0 \Bigl(\frac{10}{\beta-\delta} + xe^{(\e\beta+\frac{\delta}{2}-\frac12)x} \Bigr)e^{-\delta x} \leq \e C_0C_{\delta,\beta}e^{-\delta x}\,.
	\end{align}
	Therefore, combining \eqref{gammatoinf6}, \eqref{gammatoinf7}, and \eqref{gammatoinf8}, and by choosing $M>C_0C_1\bigl( 1+ \frac{1}{\delta}+\frac{1}{\beta-\delta} + C_{\delta,\beta} \bigr)$, we obtain
	\begin{equation} \label{gammatoinf9}
	|T_{\e,\eta}[W](x)| \leq (\e+\eta)M e^{-\delta x}
	\end{equation}
	for every $W\in X_{\e,\eta}$.
	
	\medskip
	We now show the continuity of $T_{\e,\eta}[W]$. Notice that, by using \eqref{gammatoinf5b} and the bound on $W$ in the definition of the space $X_{\e,\eta}$, it is straightforward to obtain the simple bound
	\begin{equation} \label{gammatoinf9b}
	|R_{\e,\eta}[W](x)| \leq C_1(\e+\eta)e^{-\delta x} + 4(\e+\eta)M e^{-\frac{x}{2}}e^{-\frac{\delta x}{2}} \leq C_2 e^{-\delta x}
	\end{equation}
	for a numerical constant $C_2>0$.
	Therefore for every pair of points $x_1<x_2$ we have using this estimate, \eqref{linearbounds1}, \eqref{linearbounds2}, and \eqref{linearbounds3},
	\begin{align*}
	|T_{\e,\eta}[W](x_1)-T_{\e,\eta}[W](x_2)|
	& \leq \int_{x_1}^{x_2} \Bigl( e^\xi |Q(\xi)| + e^{\xi-x_2}|\widetilde{G}(x_2,\xi)| \Bigr) |R_{\e,\eta}[W](\xi)| \de \xi \nonumber \\
	& \qquad + \int_0^{x_1} \big| e^{\xi-x_1} \widetilde{G}(x_1,\xi) - e^{\xi-x_2}\widetilde{G}(x_2,\xi) \big| |R_{\e,\eta}[W](\xi)| \de \xi \nonumber \\
	& \leq C_0C_2\int_{x_1}^{x_2} \Bigl( 1 + e^{\beta(\xi-x_2)} \Bigr) e^{-\delta\xi} \de\xi \nonumber\\
	& \qquad + C_0C_2 |x_1-x_2| \int_0^{x_1} \Bigl( e^{\beta(\xi-x_1)}+e^{\beta(\xi-x_2)} \Bigr) e^{-\delta\xi}\de\xi \nonumber\\
	& \leq 4C_0C_2 |x_1-x_2| \,.
	\end{align*}
	It follows that $T_{\e,\eta}[W]\in C^0([0,\infty))$, and this information combined with the bound \eqref{gammatoinf9} proves that $T_{\e,\eta}[W]\in X_{\e,\eta}$ for every $W\in X_{\e,\eta}$.
	
	\medskip
	We are left with the proof that $T_{\e,\eta}$ is a contraction in $X_{\e,\eta}$, for $\e$ and $\eta$ small enough. As before it is convenient to separate the contributions of the two parts $R_{\e,\eta}^1[W]$ and $R_{\e,\eta}^2[W]$ of $R_{\e,\eta}[W]$. For the first, it is straightforward to obtain the bound
	\begin{align*}
	|R_{\e,\eta}^1[W_1](x)- R_{\e,\eta}^1[W_2](x)|
	& \leq \big| \bigl(W_1(\textstyle\frac{x}{2}(1+\e))\bigr)^2 - \bigl(W_2(\textstyle\frac{x}{2}(1+\e))\bigr)^2 \big| \nonumber \\
	& \qquad + \eta \Bigl( (e^{-x}+W_1(x))^2 - (e^{-x}+W_2(x))^2 \Bigr) \nonumber \\
	& \leq C_3(\e+\eta)\|W_1-W_2\|e^{-\delta x}
	\end{align*}
	for every $W_1,W_2\in X_{\e,\eta}$, for some constant $C_3>0$ depending possibly only on $M$. By inserting this estimate in the definition \eqref{gammatoinf4} of $T_{\e,\eta}[W]$ and arguing as in \eqref{gammatoinf6}, we find
	\begin{equation} \label{gammatoinf10}
	|T_{\e,\eta}^1[W_1](x)-T_{\e,\eta}^1[W_2](x)| \leq (\e+\eta)C_0C_3\Bigl(\frac{1}{\delta}+\frac{1}{\beta-\delta}\Bigr)\|W_1-W_2\|e^{-\delta x}\,.
	\end{equation}
	We now consider the part of the operator involving $R_{\e,\eta}^2[W]$. By a change of variables and arguing as in \eqref{gammatoinf7} we have
	\begin{align}\label{gammatoinf11}
	\bigg| \int_x^\infty e^\xi Q(\xi) & \Bigl( R^2_{\e,\eta}[W_1](\xi) - R^2_{\e,\eta}[W_2](\xi) \Bigr) \de \xi \bigg| \nonumber \\
	& \leq 2\int_x^\infty \Big| e^\xi Q(\xi) - \textstyle\frac{1}{1+\e}e^{\frac{\xi}{1+\e}} Q(\frac{\xi}{1+\e}) \Big| e^{-\frac{\xi}{2}} \big| W_1(\frac{\xi}{2})-W_2(\frac{\xi}{2}) \big|\de\xi \nonumber\\
	& \qquad + \frac{2}{1+\e}\int_{x}^{x(1+\e)} e^{\frac{\xi}{1+\e}} \textstyle |Q(\frac{\xi}{1+\e})| e^{-\frac{\xi}{2}} \big|W_1(\frac{\xi}{2})-W_2(\frac{\xi}{2}) \big| \de\xi \nonumber\\
	& \leq \e C_0 C_1 \|W_1-W_2\| e^{-\delta x}\,.
	\end{align}
	Similarly by the same estimates as in \eqref{gammatoinf8}
	\begin{align} \label{gammatoinf12}
	\bigg| \int_0^x e^{\xi-x}\widetilde{G}(x,\xi) & \Bigl( R^2_{\e,\eta}[W_1](\xi) - R^2_{\e,\eta}[W_2](\xi) \Bigr)\de \xi \bigg| \nonumber\\
	& \leq 2e^{-x}\int_0^x \Big| e^\xi \widetilde{G}(x,\xi) - \textstyle\frac{1}{1+\e}e^{\frac{\xi}{1+\e}} \widetilde{G}(x,\frac{\xi}{1+\e}) \Big| e^{-\frac{\xi}{2}} \big| W_1(\frac{\xi}{2}) - W_2(\frac{\xi}{2}) \big| \de\xi \nonumber\\
	& \qquad + \frac{2e^{-x}}{1+\e}\int_{x}^{x(1+\e)} e^{\frac{\xi}{1+\e}} \textstyle |\widetilde{G}(x,\frac{\xi}{1+\e})| e^{-\frac{\xi}{2}} \big| W_1(\frac{\xi}{2}) - W_2(\frac{\xi}{2}) \big| \de\xi \nonumber\\
	& \leq \e C_0 C_{\delta,\beta} \|W_1-W_2\| e^{-\delta x}\,.
	\end{align}
	Combining \eqref{gammatoinf10}, \eqref{gammatoinf11} and \eqref{gammatoinf12} we finally obtain
	\begin{equation*}
	\|T_{\e,\eta}[W_1]-T_{\e,\eta}[W_2]\| \leq (\e+\eta) C \|W_1-W_2\|\,,
	\end{equation*}
	for every $W_1,W_2\in X_{\e,\eta}$ and for a constant $C>0$ independent of $\e$ and $\eta$. Therefore the map $T_{\e,\eta}$ is a contraction for $\e$ and $\eta$ sufficiently small.
\end{proof}

\begin{remark}[Differentiability of $W$]\label{rm:gammatoinfC1}
	We can now check that the solution $W(\cdot;\e,\eta)$ constructed in Lemma~\ref{lem:gammatoinfFP} is continuously differentiable. Indeed, we can rewrite \eqref{gammatoinf3} in the following form:
	\begin{align*}
	W(x;\e,\eta) = - \int_0^\infty e^{\xi} Q(\xi) R[W](\xi;\e,\eta)\de\xi  + \int_0^x e^{\xi-x} G(x,\xi) R[W](\xi;\e,\eta)\de \xi
	\end{align*}
	(notice that, in view of the bound \eqref{gammatoinf4bis} on $W$, the function $R[W]$ decays exponentially and the previous integrals are therefore well-defined quantities). By taking the derivative of the previous expression with respect to the variable $x$ we find, using that $G$ is the fundamental solution to \eqref{linearproblem},
	\begin{align} \label{gammatoinfC1}
	W_x(x;\e,\eta)
	& = R[W](x;\e,\eta) -2 \int_0^x e^{\xi-x} G({\textstyle\frac{x}{2}},\xi) R[W](\xi;\e,\eta) \de\xi \,.
	\end{align}
	From the definition \eqref{gammatoinf3bis} of $R[W]$ and the continuity of $W$ and $G$ it follows that the right-hand side in \eqref{gammatoinfC1} is continuous in $x$. Therefore $W(\cdot;\e,\eta)\in C^1([0,\infty))$.
	
	We can in addition obtain a bound on $W_x$ as follows: it was shown along the proof of Lemma~\ref{lem:gammatoinfFP}, using the estimate \eqref{gammatoinf4bis} on $W$, that
	\begin{equation*}
	|R[W](x;\e,\eta)| \leq (\e+\eta)Ce^{-\delta x}
	\end{equation*}
	for some constant $C$ depending only on $M$ (see \eqref{gammatoinf9b}); in turn, inserting this estimate into \eqref{gammatoinfC1} one finds, recalling the bounds on the function $G$ proved in Lemma~\ref{lem:linearproblem},
	\begin{equation*}
	|W_x(x;\e,\eta)| \leq (\e+\eta)M_1e^{-\delta x}\,,
	\end{equation*}
	for a uniform constant $M_1$ independent of $\e$ and $\eta$ ($M_1$ depends ultimately on $\delta$, $M$, and on the constants $\beta$ and $C_0$ appearing in Lemma~\ref{lem:linearproblem}).
\end{remark}

\begin{lemma}[Implicit Function Theorem] \label{lem:gammatoinfIFT}
	There are $\eta_1\in(0,\eta_0)$ and $\e_1\in(0,\e_0)$ with the following property: for every $\eta\in(0,\eta_1)$ there exists a unique $\e(\eta)\in(0,\e_1)$ such that the solution $W(\cdot;\e(\eta),\eta)$ to \eqref{gammatoinf3} constructed in Lemma~\ref{lem:gammatoinfFP} satisfies
	\begin{equation} \label{gammatoinf3b}
	F(W(\cdot;\e(\eta),\eta),\e(\eta),\eta)=0\,,
	\end{equation}
	where $F$ is defined in \eqref{gammatoinf2}.
\end{lemma}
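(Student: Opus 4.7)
The plan is to apply the Implicit Function Theorem to the scalar function
$$\mathcal{F}(\e,\eta) := F(W(\cdot;\e,\eta),\e,\eta),$$
where $W(\cdot;\e,\eta)$ is the fixed point produced by Lemma~\ref{lem:gammatoinfFP}. The starting observation is that $\mathcal{F}(0,0)=0$: at $\e=\eta=0$ the admissible class $X_{0,0}$ in Lemma~\ref{lem:gammatoinfFP} collapses to $\{0\}$ (the bound \eqref{gammatoinf4bis} becomes $|W|\leq 0$), forcing $W(\cdot;0,0)\equiv 0$, and a direct inspection of \eqref{gammatoinf5} gives $R[0](\xi;0,0)\equiv 0$; hence $F(0,0,0)=0$.

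Next I would upgrade $\mathcal{F}$ to a $C^1$ map in a neighborhood of the origin. The only nontrivial point is the smooth dependence of $W(\cdot;\e,\eta)$ on the parameters, and this is built into the contraction framework of Lemma~\ref{lem:gammatoinfFP}: working in the weighted Banach space of continuous functions on $[0,\infty)$ with norm $\|W\| = \sup_x e^{\delta x}|W(x)|$ (which contains every $X_{\e,\eta}$ for small $(\e,\eta)$), the operator $T_{\e,\eta}$ depends smoothly on $(\e,\eta)$, its Fréchet derivative in $W$ is a uniform contraction for $(\e,\eta)$ small, and standard parameter-dependent fixed-point arguments then yield $(\e,\eta)\mapsto W(\cdot;\e,\eta)\in C^1$. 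Composition gives $\mathcal{F}\in C^1$ near $(0,0)$.

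The central computation is the verification that $\partial_\e\mathcal{F}(0,0)\neq 0$. By the chain rule
$$\partial_\e\mathcal{F}(0,0) \;=\; (\partial_W F)|_0\,(\partial_\e W)|_0 + \partial_\e F|_0,$$
and the key algebraic cancellation is that $(\partial_W F)|_0 = 0$: linearizing $R[W]$ with respect to $W$ at $\e=\eta=0$, $W=0$, the two $W$-linear terms in \eqref{gammatoinf3bis} produce $2e^{-\xi/2}V(\xi/2)-2e^{-\xi/2}V(\xi/2)=0$, while the $W^2$ and $\eta W$ terms vanish by evaluation at $W=0$ or $\eta=0$. Only the explicit $\e$-dependence through $e^{-\xi}-e^{-\xi(1+\e)}$ survives, giving
$$\partial_\e\mathcal{F}(0,0) \;=\; \int_0^\infty e^\xi Q(\xi)\,\xi e^{-\xi}\de\xi \;=\; \int_0^\infty Q(\xi)\,\xi\de\xi \;=\; c_0 > 0,$$
precisely the positive constant already identified in \eqref{oscill5}. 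The Implicit Function Theorem then delivers a unique $C^1$ map $\eta\mapsto\e(\eta)$ with $\e(0)=0$ and $\mathcal{F}(\e(\eta),\eta)=0$ for $\eta$ small.

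The most delicate remaining point, and the one I expect to be the main obstacle, is the sign claim $\e(\eta)>0$ for $\eta>0$. The implicit formula reduces this to $\e'(0) = -\partial_\eta\mathcal{F}(0,0)/c_0 > 0$, i.e.\ to $\partial_\eta\mathcal{F}(0,0)=\int_0^\infty Q(\xi)e^{-\xi}\de\xi<0$. I would attempt to establish this either by exploiting the alternating-series structure of $Q$ from Lemma~\ref{lem:linearproblem} (the dominant term $-4\int_0^\infty e^{-2\xi}e^{-\xi}\de\xi$ suggests the correct sign, but the tail must be controlled), or by a softer comparison argument based on the heuristic recorded in the remark preceding the lemma: for $\eta=0$ one has $\mathcal{F}(\e,0)<0$ for $\e<0$ (the full solution changes sign) and $\mathcal{F}(\e,0)>0$ for $\e>0$ (since $\liminf_x x\,h>0$ is incompatible with exponential decay of $W$), and propagating this monotonicity in $\e$ to $\eta>0$ small by continuity of $\mathcal{F}$ pins down the branch $\e(\eta)\in(0,\e_1)$ uniquely.
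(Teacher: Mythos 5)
Your proposal follows the same strategy as the paper: define $f(\e,\eta):=F(W(\cdot;\e,\eta),\e,\eta)$, verify $f(0,0)=0$, compute $\partial_\e f(0,0)=\int_0^\infty \xi Q(\xi)\de\xi>0$ and $\partial_\eta f(0,0)=\int_0^\infty e^{-\xi}Q(\xi)\de\xi<0$ (the paper checks the latter by truncating the alternating series after five terms, exactly your first suggested route), and conclude via an implicit function argument. The algebraic cancellation you identify — that the $W$-linear terms in $R$ collapse at $\e=\eta=0$, so only the explicit $\e$- and $\eta$-dependence survives — is precisely what makes the paper's ``straightforward computation'' work.

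Two places where your sketch hides genuine work that the paper must supply. First, the ``standard parameter-dependent fixed-point argument'' does not go through directly in the weighted $C^0$ space you name: because $\e$ enters through the dilation $W\bigl(\tfrac{x}{2}(1+\e)\bigr)$, differentiating $T_{\e,\eta}$ in $\e$ produces $W_x$, so one must first establish that $W(\cdot;\e,\eta)\in C^1([0,\infty))$ with exponentially decaying derivative (Remark~\ref{rm:gammatoinfC1}) before the Lipschitz-in-$\e$ and differentiability-in-$\e$ estimates can close (this is what occupies Steps~1--2 of the paper's proof). Second, since $f$ is only defined on $[0,\e_0)\times[0,\eta_0)$, a neighborhood version of the implicit function theorem does not apply; the sign conditions $\partial_\e f>0$, $\partial_\eta f<0$ are exactly what the one-sided version (Lemma~\ref{lem:one-sideIFT}) needs to produce a branch $\e(\eta)\in(0,\e_1)$ for $\eta\in(0,\eta_1)$, which simultaneously delivers the positivity $\e(\eta)>0$ that you correctly flag as the delicate point. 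Your backup ``comparison'' argument for this sign is the one route that does not survive scrutiny, since it appeals to the behavior of $f$ at $\e<0$, which is outside the domain of definition.
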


\begin{proof}
Define the function $f(\e,\eta):= F(W(\cdot;\e,\eta),\e,\eta)$. The proof will follow by applying a one-side version of the Implicit Function Theorem (see Lemma~\ref{lem:one-sideIFT} below) to the function $f$ in a neighborhood of the point $(\e,\eta)=(0,0)$. Notice that $f(0,0)=0$, as $W(\cdot;0,0)\equiv0$. The main technical part in the proof consists therefore in proving that $f$ is continuously differentiable in the two parameters.

Recall that by the construction in Lemma~\ref{lem:gammatoinfFP} and the subsequent Remark~\ref{rm:gammatoinfC1} the function $W(\cdot;\e,\eta)\in C^1([0,\infty))$ satisfies the equation \eqref{gammatoinf3}, with $R[W]$ defined in \eqref{gammatoinf3bis}, together with the estimates
\begin{equation} \label{gammatoinf21}
|W(x;\e,\eta)| \leq (\e+\eta)M e^{-\delta x}\,, \qquad |W_x(x;\e,\eta)| \leq (\e+\eta)M_1 e^{-\delta x}\,,
\end{equation}
where $M,M_1>1$ and $\delta\in(0,\beta)$ are fixed constants and $\beta$ is as in Lemma~\ref{lem:linearproblem}.
Along the proof we will denote by $C$ a uniform, positive constant depending possibly only on the fixed parameters $M$, $M_1$, $\delta$, $\beta$, but not on $\e$ or $\eta$, which might change from line to line.

\medskip
\noindent\textit{Step 1: Lipschitz continuity of $W$ in $\e$ and $\eta$.}
We first prove a uniform bound on the difference quotients of $W$ in the variable $\e$. To this aim, we fix $(\e,\eta)\in[0,\frac{\e_1}{2})\times[0,\frac{\eta_1}{2})$, where $\e_1<\e_0$ and $\eta_1<\eta_0$ are to be chosen later. In this argument the variable $\eta$ will always take a fixed value and therefore we will not indicate the dependence on $\eta$ to lighten the notation. Fix also $h_0>0$, and define the quantity
\begin{equation} \label{gammatoinf22}
S_{h_0} := \sup_{h\in(h_0,\frac{\e_1}{2})} \, \sup_{x>0} \, \bigg| \frac{W(x;\e+h)-W(x;\e)}{h} \bigg| + \bigg| \frac{W_x(x;\e+h)-W_x(x;\e)}{h} \bigg|
\end{equation}
(which is finite in view of the bound \eqref{gammatoinf21}).

We introduce to simplify the notation the function
\begin{align} \label{gammatoinf22b}
\rho(x,\e) &:= R[W(\cdot;\e)](x;\e) = e^{-x}-e^{-x(1+\e)} + 2e^{-\frac{x}{2}}W(\textstyle\frac{x}{2};\e) - 2e^{-\frac{x}{2}(1+\e)}W(\textstyle\frac{x}{2}(1+\e);\e) \nonumber \\
& \qquad - \bigl(W(\textstyle\frac{x}{2}(1+\e);\e)\bigr)^2 + \eta\bigl(e^{-x}+W(x;\e)\bigr)^2\,,
\end{align}
so that by \eqref{gammatoinf3}
\begin{equation} \label{gammatoinf23}
\begin{split}
\frac{|W(x;\e+h)-W(x;\e)|}{h}
& \leq \int_x^\infty e^{\xi}|Q(\xi)| \frac{|\rho(\xi,\e+h)-\rho(\xi,\e)|}{h} \de\xi \\
& \qquad + \int_0^x e^{\xi-x}|\widetilde{G}(x,\xi)| \frac{|\rho(\xi,\e+h)-\rho(\xi,\e)|}{h} \de\xi\,.
\end{split}
\end{equation}

The first step is to obtain a uniform bound for $h_0<h<\frac{\e_1}{2}$ on the difference quotients of the function $\rho$ with respect to the variable $\e$: we can write
\begin{align*}
\bigg| \frac{\rho(x,\e+h)-\rho(x,\e)}{h} \bigg|
& \leq \frac{|e^{-x(1+\e+h)}-e^{-x(1+\e)}|}{h} \\
& \qquad + \frac{2e^{-\frac{x}{2}}}{h}  \Big| W(\textstyle\frac{x}{2};\e+h) - e^{-\frac{x}{2}(\e+h)}W(\frac{x}{2}(1+\e+h);\e+h) \\
& \qquad\qquad -W(\textstyle\frac{x}{2};\e) +e^{-\frac{x}{2}\e} W(\frac{x}{2}(1+\e);\e) \Big|\\
& \qquad + \frac{1}{h} \Big| \bigl(W(\textstyle\frac{x}{2}(1+\e+h);\e+h)\bigr)^2 - \bigl(W(\frac{x}{2}(1+\e);\e)\bigr)^2  \Big| \\
& \qquad +\frac{\eta}{h} \Big| \bigl( e^{-x} + W(x;\e+h) \bigr)^2 - \bigl( e^{-x} + W(x;\e) \bigr)^2  \Big| \\
& =: A_1 + A_2 + A_3 + A_4\,.
\end{align*}
We now estimate separately each term $A_i$. For the first one we have (recall that $\delta<\frac12$)
\begin{align*}
A_1 \leq x e^{-(1+\e)x} \leq e^{- \delta x}\,.
\end{align*}
The term $A_2$ can be written in the following form:
\begin{align*}
A_2 &\leq \frac{2e^{-\frac{x}{2}}}{h} \bigg|
\textstyle e^{-\frac{x}{2}(\e+h)} \Bigl( W(\frac{x}{2}(1+\e+h);\e+h) - W(\frac{x}{2}(1+\e);\e+h) \Bigr) \\
& \textstyle\qquad\qquad + \bigl( e^{-\frac{x}{2}(\e+h)}-e^{-\frac{x}{2}\e} \bigr) W(\frac{x}{2}(1+\e);\e+h) \\
& \textstyle\qquad\qquad + \bigl( e^{-\frac{x}{2}\e}-1\bigr) \Bigl( W(\frac{x}{2}(1+\e);\e+h) - W(\frac{x}{2}(1+\e);\e) \Bigr) \\
& \textstyle\qquad\qquad + \Bigl( W(\frac{x}{2}(1+\e);\e+h) - W(\frac{x}{2};\e+h) - W(\frac{x}{2}(1+\e);\e) + W(\frac{x}{2};\e) \Bigr) \bigg| \\
& \leq e^{-\frac{x}{2}} \biggl( (\e+h+\eta)M_1xe^{-\frac12\delta x} + xe^{-\frac{x}{2}\e}(\e+h+\eta)Me^{-\frac12\delta x} + \e x S_{h_0} \\
& \qquad\qquad +\frac{\e x}{h}\int_0^1 \textstyle\big| W_x(\frac{x}{2}(1+t\e);\e+h) - W_x(\frac{x}{2}(1+t\e);\e)\big| \de t \biggr) \\
& \leq (\e_1+\eta_1)C (1+x S_{h_0})e^{-\frac{x}{2}}
\end{align*}
where we used in particular \eqref{gammatoinf21} and the definition of $S_{h_0}$.
We now look at the term $A_3$:
\begin{align*}
A_3 & \leq 2(\e+h+\eta)Me^{-\frac12\delta x(1+\e)} \frac{ \big| W(\textstyle\frac{x}{2}(1+\e+h);\e+h) - W(\frac{x}{2}(1+\e);\e) \big|}{h} \\
& \leq (\e_1+\eta_1)^2MM_1xe^{- \delta x} + 2(\e_1+\eta_1)MS_{h_0}e^{-\frac12\delta x} \\
& \leq (\e_1+\eta_1)C\bigl( 1 + S_{h_0}\bigr) e^{-\frac12\delta x} \,.
\end{align*}
Finally, for the term $A_4$ we have
\begin{align*}
A_4 \leq 2\eta \Bigl( e^{-x} + (\e+h+\eta)Me^{-\delta x}\Bigr) \frac{|W(x;\e+h)-W(x;\e)|}{h} \leq (\e_1+\eta_1)C S_{h_0}e^{-\delta x}\,.
\end{align*}
By collecting all the previous bounds we find for all $h_0<h<\frac{\e_1}{2}$
\begin{equation} \label{gammatoinf24}
\bigg| \frac{\rho(x,\e+h)-\rho(x,\e)}{h} \bigg| \leq C \Bigl( 1 + (\e_1+\eta_1)S_{h_0} \Bigr) e^{-\frac12\delta x}\,.
\end{equation}
Inserting this estimate in \eqref{gammatoinf23} and using also \eqref{linearbounds1}--\eqref{linearbounds2} we obtain
\begin{align*}
\frac{|W(x;\e+h)-W(x;\e)|}{h}
& \leq C_0C\bigl( 1 + (\e_1+\eta_1)S_{h_0}\bigr) \biggl( \int_{x}^\infty e^{-\frac12\delta\xi}\de\xi + \int_0^x e^{\beta(\xi-x)}e^{-\frac12\delta\xi}\de\xi\biggr) \\
& \leq C\bigl( 1 + (\e_1+\eta_1)S_{h_0}\bigr)e^{-\frac12\delta x}\,.
\end{align*}
Similarly, using the expression \eqref{gammatoinfC1} for the derivative of $W$ we obtain
\begin{align*}
\frac{|W_x(x;\e+h)-W_x(x;\e)|}{h}
& \leq \frac{|\rho(x,\e+h)-\rho(x,\e)|}{h} \\
& \qquad + 2 \int_0^x e^{\xi-x} |G({\textstyle\frac{x}{2}},\xi)| \frac{|\rho(\xi,\e+h)-\rho(\xi,\e)|}{h} \de\xi \\
& \leq C\bigl( 1 + (\e_1+\eta_1)S_{h_0}\bigr)e^{-\frac12\delta x}\,.
\end{align*}
Therefore, adding up these two inequalities, and taking the supremum over $h\in(h_0,\frac12\e_1)$ and $x>0$, we end up with the bound $S_{h_0} \leq C\bigl( 1 + (\e_1+\eta_1)S_{h_0}\bigr)$; in turn, by choosing $\e_1$ and $\eta_1$ small enough, this yields $S_{h_0}\leq C$ for a uniform constant $C>0$ independent of $\e$, $\eta$, and $h_0$. As $h_0$ is arbitrary, we can conclude that $W$ and $W_x$ are uniformly Lipschitz continuous with respect to the variable $\e$ for $(x,\e,\eta)\in[0,\infty)\times[0,\frac{\e_1}{2})\times[0,\frac{\eta_1}{2})$. The proof of the Lipschitz continuity in the variable $\eta$ can be proved by a similar argument.

\medskip
\noindent\textit{Step 2: differentiability of $W$ in $\e$ and $\eta$.}
From the Lipschitz continuity it follows that $W$ is differentiable almost everywhere in the domain $[0,\infty)\times[0,\frac{\e_1}{2})\times[0,\frac{\eta_1}{2})$ with respect to the variables $\e$, $\eta$. We want  to prove now the continuity of $W_\e$ and $W_\eta$. As before we present the argument only for $W_\e$ (the one for $W_\eta$ being similar), and since $\eta$ takes always a fixed value we will not indicate the dependence on this variable to lighten the notation.

Notice that for all $x>0$ and $\e,\bar{\e}\in[0,\frac{\e_1}{2})$ we have the uniform estimates
\begin{equation} \label{gammatoinf25}
|W(x;\e)-W(x;\bar{\e})| \leq C|\e-\bar{\e}|e^{-\frac12\delta x}\,,
\qquad
|W_x(x;\e)-W_x(x;\bar{\e})| \leq C|\e-\bar{\e}|e^{-\frac12\delta x}\,,
\end{equation}
which in turn yield the bound on the partial derivative
\begin{equation} \label{gammatoinf25b}
|W_\e(x;\e)| \leq C e^{-\frac12\delta x}\,.
\end{equation}
For any two values $\e,\bar{\e}\in[0,\frac{\e_1}{2})$ we define the quantity
\begin{equation*}
S_{\e,\bar{\e}} := \sup_{x>0} \, |W_\e(x;\e)-W_\e(x;\bar{\e})|\,.
\end{equation*}

We first differentiate the function $\rho(x,\e)$, introduced in \eqref{gammatoinf22b}, with respect to $\e$: it is convenient to split its derivative into three parts,
\begin{equation*}
\rho_\e(x,\e) = \rho_\e^1(x,\e) + \rho_\e^2(x,\e) + \rho_\e^3(x,\e)\,,
\end{equation*}
with
\begin{align*}
\rho_\e^1(x,\e) := 2e^{-\frac{x}{2}} W_\e(\textstyle\frac{x}{2};\e) - 2e^{-\frac{x}{2}(1+\e)} W_\e(\frac{x}{2}(1+\e);\e) -2 W(\textstyle\frac{x}{2}(1+\e);\e) W_\e(\frac{x}{2}(1+\e);\e) \,,
\end{align*}
\begin{align*}
\rho_\e^2(x,\e) &:= xe^{-x(1+\e)} + xe^{-\frac{x}{2}(1+\e)} W(\textstyle\frac{x}{2}(1+\e);\e) - xe^{-\frac{x}{2}(1+\e)}W_x(\frac{x}{2}(1+\e);\e) \\
& \qquad\qquad -xW(\textstyle\frac{x}{2}(1+\e);\e)W_x(\frac{x}{2}(1+\e);\e)\,,
\end{align*}
\begin{align*}
\rho_\e^3(x,\e) := 2\eta\bigl(e^{-x} + W(x;\e)\bigr) W_\e(x;\e)\,.
\end{align*}
Notice that in view of \eqref{gammatoinf21} and \eqref{gammatoinf25b} we have a uniform bound $|\rho_\e(x,\e)|\leq Ce^{-\frac12\delta x}$. Therefore  we can differentiate under the integral sign in the equation \eqref{gammatoinf3}:
\begin{equation} \label{gammatoinf26}
\begin{split}
W_\e(x;\e) &= \sum_{i=1}^3 \biggl( -\int_x^\infty e^\xi Q(\xi) \rho_\e^i(\xi,\e)\de\xi + \int_0^x e^{\xi-x}\widetilde{G}(x,\xi)\rho_\e^i(\xi,\e)\de\xi \biggr) \\
& =: \sum_{i=1}^3 \Bigl( -A_i(x,\e) + B_i(x,\e) \Bigr)\,.
\end{split}
\end{equation}
The goal is now to obtain a bound on the difference $|W_\e(x;\e)-W_\e(x;\bar{\e})|$, in order to show the continuity of $W_\e$. Hence we proceed by considering each term in the sum \eqref{gammatoinf26}.

By a change of variable we can write
\begin{align*}
A_1(x,\e)
&= 2\int_x^\infty e^\xi Q(\xi) e^{-\frac{\xi}{2}} W_\e({\textstyle\frac{\xi}{2}};\e)\de\xi
-\frac{2}{1+\e}\int_{x(1+\e)}^\infty e^{\frac{\xi}{1+\e}} Q({\textstyle\frac{\xi}{1+\e}}) e^{-\frac{\xi}{2}} W_\e({\textstyle\frac{\xi}{2}};\e)\de\xi \\
& \qquad -\frac{2}{1+\e}\int_{x(1+\e)}^\infty e^{\frac{\xi}{1+\e}} Q({\textstyle\frac{\xi}{1+\e}}) W({\textstyle\frac{\xi}{2}};\e) W_\e({\textstyle\frac{\xi}{2}};\e)\de\xi \\
& = 2\int_x^\infty \biggl( e^\xi Q(\xi) - \frac{e^{\frac{\xi}{1+\e}}}{1+\e}Q({\textstyle\frac{\xi}{1+\e}})\biggr) e^{-\frac{\xi}{2}}W_\e({\textstyle\frac{\xi}{2}};\e)\de\xi \\
& \qquad + \frac{2}{1+\e}\int_x^{x(1+\e)} e^{\frac{\xi}{1+\e}} Q({\textstyle\frac{\xi}{1+\e}}) e^{-\frac{\xi}{2}} W_\e({\textstyle\frac{\xi}{2}};\e)\de\xi \\
& \qquad -\frac{2}{1+\e}\int_{x(1+\e)}^\infty e^{\frac{\xi}{1+\e}} Q({\textstyle\frac{\xi}{1+\e}}) W({\textstyle\frac{\xi}{2}};\e) W_\e({\textstyle\frac{\xi}{2}};\e)\de\xi \\
& =: a(x,\e) + b(x,\e) + c(x,\e).
\end{align*}
In order to obtain an estimate for the difference $|A_1(x,\e)-A_1(x,\bar{\e})|$ we consider the three terms on the right-hand side of the previous equation separately. The following estimates are obtained by using the bound \eqref{gammatoinf25b} and the Lipschitz continuity of the function $Q$ (see \eqref{linearbounds1}). For the first term we have
\begin{align*}
|a(x,\e)-a(x,\bar{\e})|
& \leq 2\int_x^\infty \bigg| \frac{e^{\frac{\xi}{1+\e}}}{1+\e}Q({\textstyle\frac{\xi}{1+\e}}) - \frac{e^{\frac{\xi}{1+\bar\e}}}{1+\bar\e}Q({\textstyle\frac{\xi}{1+\bar\e}}) \bigg| e^{-\frac{\xi}{2}} |W_\e({\textstyle\frac{\xi}{2}};\e)| \de\xi \\
& \qquad + 2\int_x^\infty \bigg| e^\xi Q(\xi) - \frac{e^{\frac{\xi}{1+\bar\e}}}{1+\bar\e}Q({\textstyle\frac{\xi}{1+\bar\e}})\bigg| e^{-\frac{\xi}{2}} \big| W_\e({\textstyle\frac{\xi}{2}};\e) - W_\e({\textstyle\frac{\xi}{2}};\bar{\e}) \big| \de\xi \\
& \leq C|\e-\bar{\e}|\int_x^\infty e^{-\frac{\xi}{2}} e^{-\frac14\delta\xi}\de\xi + C\bar{\e}S_{\e,\bar{\e}} \int_x^\infty e^{-\frac{\xi}{2}}\de\xi \\
& \leq C|\e-\bar{\e}| + C\bar{\e}S_{\e,\bar{\e}}\,.
\end{align*}
For the second term we obtain
\begin{align*}
|b(x,\e)-b(x,\bar{\e})|
& \leq \bigg|  \frac{2}{1+\e}\int_x^{x(1+\e)} e^{\frac{\xi}{1+\e}} Q({\textstyle\frac{\xi}{1+\e}}) e^{-\frac{\xi}{2}} \Bigl( W_\e({\textstyle\frac{\xi}{2}};\e) - W_\e({\textstyle\frac{\xi}{2}},\bar{\e}) \Bigr) \de\xi \bigg| \\
& \qquad+ \bigg| \frac{2}{1+\e}\int_x^{x(1+\e)} e^{\frac{\xi}{1+\e}} Q({\textstyle\frac{\xi}{1+\e}}) e^{-\frac{\xi}{2}} W_\e({\textstyle\frac{\xi}{2}};\bar\e)\de\xi \\
& \qquad\qquad - \frac{2}{1+\bar\e}\int_x^{x(1+\bar\e)} e^{\frac{\xi}{1+\bar\e}} Q({\textstyle\frac{\xi}{1+\bar\e}}) e^{-\frac{\xi}{2}} W_\e({\textstyle\frac{\xi}{2}};\bar\e)\de\xi \bigg| \\
& \leq C S_{\e,\bar{\e}} \int_x^{x(1+\e)} e^{-\frac{\xi}{2}} \de\xi + C|\e-\bar{\e}| \leq C\e S_{\e,\bar{\e}} + C|\e-\bar{\e}|\,.
\end{align*}
For the third term we use also \eqref{gammatoinf21} and \eqref{gammatoinf25}:
\begin{align*}
|c(x,\e)-c(x,\bar{\e})|
& \leq \bigg| \frac{2}{1+\e}\int_{x(1+\e)}^\infty e^{\frac{\xi}{1+\e}} Q({\textstyle\frac{\xi}{1+\e}}) W({\textstyle\frac{\xi}{2}};\e) \Bigl( W_\e({\textstyle\frac{\xi}{2}};\e) - W_\e({\textstyle\frac{\xi}{2}};\bar\e) \Bigr) \de\xi \bigg| \\
& \qquad + \bigg| \frac{2}{1+\e}\int_{x(1+\e)}^\infty e^{\frac{\xi}{1+\e}} Q({\textstyle\frac{\xi}{1+\e}}) W({\textstyle\frac{\xi}{2}};\e) W_\e({\textstyle\frac{\xi}{2}};\bar\e) \de\xi \\
& \qquad\qquad - \frac{2}{1+\bar\e}\int_{x(1+\bar\e)}^\infty e^{\frac{\xi}{1+\bar\e}} Q({\textstyle\frac{\xi}{1+\bar\e}}) W({\textstyle\frac{\xi}{2}};\bar\e) W_\e({\textstyle\frac{\xi}{2}};\bar\e) \de\xi \bigg| \\
& \leq C(\e+\eta) S_{\e,\bar{\e}} \int_{x(1+\e)}^\infty e^{-\frac12\delta\xi}\de\xi + C|\e-\bar{\e}| \leq C(\e+\eta)S_{\e,\bar{\e}} + C|\e-\bar{\e}|\,.
\end{align*}
Collecting all the previous estimates we get
\begin{equation} \label{gammatoinf26a}
|A_1(x,\e)-A_1(x,\bar{\e})| \leq C(\e_1+\eta_1)S_{\e,\bar{\e}} + C|\e-\bar{\e}|\,.
\end{equation}
Arguing in a completely similar way, using in particular the Lipschitz continuity of the function $\widetilde{G}$ (see \eqref{linearbounds2}), one can prove that
\begin{equation} \label{gammatoinf26b}
|B_1(x,\e)-B_1(x,\bar{\e})| \leq C(\e_1+\eta_1)S_{\e,\bar{\e}} + C|\e-\bar{\e}|\,.
\end{equation}

Observe that, as the functions $W$ and $W_x$ are continuous in both variables $(x,\e)$, the second term $\rho_\e^2$ is continuous in $\e$; this information, combined with the bound $|\rho^2_\e(x,\e)|\leq Ce^{-\frac12\delta x}$, yields by Lebesgue's Dominated Convergence Theorem
\begin{align} \label{gammatoinf26c}
|A_2(x,\e)-A_2(x,\bar{\e})| \leq \omega(\e-\bar{\e})\,,
\qquad
|B_2(x,\e)-B_2(x,\bar{\e})| \leq \omega(\e-\bar{\e})\,,
\end{align}
where $\omega(\e-\bar{\e})\to0$ as $\e-\bar{\e}\to0$ (uniformly with respect to $x$).

For the terms containing $\rho_\e^3$ we have by using the bounds \eqref{gammatoinf25}--\eqref{gammatoinf25b}
\begin{align} \label{gammatoinf26d}
| A_3(x,\e)-A_3(x,\bar{\e}) |
& \leq 2\eta\bigg| \int_x^\infty e^\xi Q(\xi) \bigl( W(\xi;\e)-W(\xi;\bar{\e}) \bigr)W_\e(\xi;\e) \de\xi \bigg| \nonumber \\
& \qquad + 2\eta\bigg| \int_x^\infty \e^\xi Q(\xi) \bigl(e^{-\xi} + W(\xi;\bar\e)\bigr) \bigl(W_\e(\xi;\e) - W_\e(\xi;\bar{\e})\bigr) \de\xi \bigg| \nonumber \\
& \leq C\eta|\e-\bar{\e}|\int_x^\infty e^{-\delta\xi}\de\xi + C \eta S_{\e,\bar{\e}} \int_x^\infty e^{-\delta\xi}\de\xi \nonumber \\
& \leq C\eta |\e-\bar{\e}| + C\eta S_{\e,\bar{\e}}\,,
\end{align}
and similarly for $B_3$
\begin{align} \label{gammatoinf26e}
| B_3(x,\e)-B_3(x,\bar{\e}) | \leq C\eta |\e-\bar{\e}| + C\eta S_{\e,\bar{\e}}\,.
\end{align}

By collecting \eqref{gammatoinf26a}--\eqref{gammatoinf26e} and inserting the bounds in \eqref{gammatoinf26} we have
\begin{equation*}
|W_\e(x,\e)-W_\e(x,\bar{\e})| \leq C(\e_1+\eta_1)S_{\e,\bar{\e}} + C|\e-\bar{\e}| + 2\omega(\e-\bar{\e})\,.
\end{equation*}
Therefore taking the supremum over $x>0$ and choosing smaller $\e_1$, $\eta_1$ if necessary we eventually obtain
\begin{equation*}
S_{\e,\bar{\e}} \leq C \Bigl( |\e-\bar{\e}| + \omega(\e-\bar{\e}) \Bigr) \qquad\text{for all }\e,\bar{\e}\in[0,\textstyle\frac{\e_1}{2}).
\end{equation*}
This yields the continuity with respect to the variable $\e$ of the partial derivative $W_\e$. In a similar fashion one can prove the continuity of $W_\e$ with respect to $\eta$, and also the continuity of $W_\eta$; we omit the details.

\medskip
\noindent\textit{Step 3: differentiability of $f$.}
Recall the definition \eqref{gammatoinf2} of $F$, the function $f(\e,\eta)$ can be written as follows:
\begin{align*}
f(\e,\eta) 
&= F(W(\cdot;\e,\eta),\e,\eta) \\
& = \int_0^\infty \textstyle e^\xi Q(\xi) \Bigl( e^{-\xi} - e^{-\xi(1+\e)} + 2e^{-\frac{\xi}{2}}W(\frac{\xi}{2};\e,\eta) -2e^{-\frac{\xi}{2}(1+\e)} W(\frac{\xi}{2}(1+\e);\e,\eta) \\
& \qquad\qquad\textstyle - \bigl( W(\frac{\xi}{2}(1+\e);\e,\eta) \bigr)^2 + \eta\bigl( e^{-\xi} + W(\xi;\e,\eta) \bigr)^2 \Bigr) \de\xi \,.
\end{align*}
Since $W\in C^1([0,\infty)\times[0,\frac{\e_1}{2})\times[0,\frac{\eta_1}{2}))$ and $W$ and its partial derivatives decay exponentially as $x\to\infty$, we can differentiate under the integral sign; the resulting expression is continuous in $(\e,\eta)$, and we conclude that $f$ is continuously differentiable in $[0,\frac{\e_1}{2})\times[0,\frac{\eta_1}{2})$.

\medskip
\noindent\textit{Step 4: Implicit Function Theorem.}
To conclude the proof it only remains to show the assumption in Lemma~\ref{lem:one-sideIFT} on the sign of the partial derivatives of $f$. We find by a straightforward computation, recalling that $W(\cdot;0,0)\equiv0$,
\begin{align*}
\frac{\partial f}{\partial \e}(0,0)
&= \int_0^\infty \xi Q(\xi) \de\xi\,.
\end{align*}
This integral has already been encountered in the proof of Proposition~\ref{prop:oscill}, where it was shown that it is strictly positive: see \eqref{oscill5}. For the partial derivative with respect to the variable $\eta$ we find similarly
\begin{align*}
\frac{\partial f}{\partial\eta}(0,0) = \int_0^\infty e^{-\xi}Q(\xi)\de\xi \,.
\end{align*}
To determine the sign of this integral, recall that the function $Q$ was defined in \eqref{linearG} as an alternating series $Q(\xi)=\sum_{n=0}^\infty (-1)^na_n e^{-2^n\xi}$, $a_n>0$, with the sequence of the coefficients $(a_n)_n$ strictly decreasing starting from $n=2$. We therefore have
\begin{align*}
\frac{\partial f}{\partial\eta}(0,0) = \sum_{n=0}^\infty (-1)^n a_n \int_0^\infty e^{-(1+2^n)\xi}\de\xi = \frac12 + \sum_{n=1}^\infty \frac{(-1)^n 2^{2n}}{(1+2^n)\prod_{j=1}^n(2^j-1)}\,.
\end{align*}
We can directly check that the previous quantity is strictly negative: one can just compute the contribution of the first five term, the rest being negative thanks to the monotonicity of the coefficients. Therefore $\frac{\partial f}{\partial \eta}<0$ and all the assumptions needed to apply Lemma~\ref{lem:one-sideIFT} are satisfied.
\end{proof}

\begin{lemma}[One-sided Implicit Function Theorem] \label{lem:one-sideIFT}
	Let $f:[0,x_0)\times[0,y_0)\to\R$ be continuously differentiable and assume that
	\begin{equation*}
	f(0,0)=0,\qquad \frac{\partial f}{\partial y}(0,0)>0,\qquad \frac{\partial f}{\partial x}(0,0)<0\,.
	\end{equation*}
	Then there exist $x_1\in(0,x_0)$ and $y_1\in(0,y_0)$ and a $C^1$-map $g:[0,x_1)\to[0,y_1)$ such that $g(0)=0$ and
	\begin{equation} \label{IFT}
	f(x,y)=0 \text{ with }(x,y)\in[0,x_1)\times[0,y_1) \qquad\text{ if and only if }\qquad y=g(x).
	\end{equation}
\end{lemma}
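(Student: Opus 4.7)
The plan is to prove this directly by a one-variable intermediate-value-theorem argument along horizontal slices, thereby avoiding any $C^1$-extension of $f$ across the coordinate axes (which would require technicalities such as Seeley reflection to promote $C^1$-regularity on a half-space to a full neighborhood).

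First, using the continuity of $\partial_x f$ and $\partial_y f$ together with the sign hypotheses at the origin, I would choose $x_2 \in (0, x_0)$ and $y_1 \in (0, y_0)$ so small that $\partial_y f \geq c > 0$ throughout $[0, x_2) \times [0, y_1]$ and $\partial_x f(\cdot, 0) \leq -c$ on $[0, x_2)$, for some constant $c>0$. Integration in each variable then yields $f(x, 0) \leq -c x$ for $x \in [0, x_2)$ and $f(0, y_1) \geq c y_1 > 0$, and by continuity of $f$ one may further shrink to $x_1 \in (0, x_2)$ with $f(x, y_1) > 0$ for all $x \in [0, x_1)$. For each such $x$, the strict monotonicity of $y \mapsto f(x, y)$ on $[0, y_1]$ (a consequence of $\partial_y f \geq c$) combined with the intermediate value theorem produces a \emph{unique} value $g(x) \in [0, y_1)$ with $f(x, g(x)) = 0$; the equality $g(0) = 0$ is forced by $f(0,0) = 0$ and uniqueness, and the biconditional \eqref{IFT} is then immediate.

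For the $C^1$-regularity of $g$, given $x, x' \in [0, x_1)$ I would write
\begin{equation*}
0 = f(x', g(x')) - f(x, g(x)) = A(x, x')(x' - x) + B(x, x')(g(x') - g(x)),
\end{equation*}
where $A(x, x') := \int_0^1 \partial_x f(\alpha(t))\de t$, $B(x, x') := \int_0^1 \partial_y f(\alpha(t))\de t$, and $\alpha(t) := (x + t(x'-x),\, g(x) + t(g(x') - g(x)))$ lies in the \emph{convex} domain $[0, x_1) \times [0, y_1)$. Since $B(x, x') \geq c > 0$ uniformly and $A$ is bounded, this yields a Lipschitz bound $|g(x') - g(x)| \leq C|x' - x|$, so $g$ is continuous; passing to the limit $x' \to x$ and invoking the continuity of the partial derivatives gives
\begin{equation*}
g'(x) = -\frac{\partial_x f(x, g(x))}{\partial_y f(x, g(x))},
\end{equation*}
which is continuous on $[0, x_1)$ (including as a one-sided derivative at $x = 0$) by continuity of the partial derivatives and of $g$.

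I do not expect a serious obstacle: the argument is just the classical scalar monotone implicit-function theorem, with every limit and estimate interpreted one-sidedly at the origin. The only minor technical point is verifying that the segment $\alpha([0,1])$ used in the integral representation of $f(x',g(x'))-f(x,g(x))$ actually stays inside the domain $[0,x_1)\times[0,y_1)$ where $f$ is defined and $C^1$; this is automatic by convexity of that rectangle.
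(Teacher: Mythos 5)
Your proof is correct and takes essentially the same approach as the paper: pin down a small rectangle on which $\partial_y f > 0$, show $f(x,0)<0$ and $f(x,y_1)>0$ on its bottom and top edges, and invoke the intermediate value theorem together with strict monotonicity in $y$ to obtain a unique root $g(x)$. Your write-up is slightly more explicit than the paper's (which just says the differentiability of $g$ "can be shown as in the proof of the standard Implicit Function Theorem"), and your observation that the rectangle's convexity is what keeps the segment $\alpha([0,1])$ inside the domain — so no $C^1$-extension of $f$ across the axes is needed — is a worthwhile remark, but the substance of the argument is identical.
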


\begin{proof}
	This is just a small variation of the proof of the standard Implicit Function Theorem. The function $y\mapsto f(0,y)$ is strictly increasing in a neighborhood of zero and takes the value zero at $y=0$, therefore $f(0,y_1)>0$ if $y_1>0$ is small enough. By continuity we can find $\delta_1>0$ such that $f(x,y_1)>0$ for every $x\in(0,\delta_1)$. The function $x\mapsto f(x,0)$ is strictly decreasing in a neighborhood of zero and takes the value zero at $x=0$, therefore we can find $\delta_2>0$ such that $f(x,0)<0$ for every $x\in(0,\delta_2)$. Let $x_1:=\min\{\delta_1,\delta_2\}$. By reducing $x_1$ and $y_1$ if necessary we can assume that $\frac{\partial f}{\partial y}>0$ in the whole square $[0,x_1]\times[0,y_1]$. Then
	$$
	f(x,0)<0,\quad f(x,y_1)>0,\quad \frac{\partial f}{\partial y}(x,y)>0
	$$
	for every $(x,y)\in(0,x_1)\times(0,y_1)$. By continuity and the intermediate value theorem we obtain the existence of a function $g$ satisfying \eqref{IFT}. The differentiability of $g$ can be shown as in the proof of the standard Implicit Function Theorem.
\end{proof}

\begin{proof}[Proof of Theorem~\ref{thm:gammatoinf}]
	For every $\eta\in(0,\eta_1)$ we have obtained in Lemma~\ref{lem:gammatoinfIFT} a function $W(x;\e(\eta),\eta)$ satisfying the integral equation \eqref{gammatoinf3} together with the condition \eqref{gammatoinf3b}. Therefore by the construction discussed at the beginning of this section we have that the function $h(x)=e^{-x} + W(x;\e(\eta),\eta)$ solves the starting equation \eqref{gammatoinf0} for the values $(\e(\eta),\eta)$ of the two parameters, and thanks to \eqref{gammatoinf4bis} it decays exponentially at infinity:
	\begin{equation} \label{gammatoinf31}
	|h(x)| \leq Ce^{-\delta x}\,.
	\end{equation}
	
	It only remains to check the positivity of this solution. Assume by contradiction that $h(x_0)<0$ for some $x_0>0$. It is easily checked using the equation that $h$ has to remain negative also for larger values of $x$. Then for $x>x_0$
	\begin{equation*}
	h'(x)  = -\bigl(h\bigl(\textstyle \frac{x}{2}(1+\e)\bigr)\bigr)^2 + \eta\bigl(h(x)\bigr)^2 \leq \eta\bigl(h(x)\bigr)^2\,,
	\end{equation*}
	that is, $(\frac{1}{h(x)})' \geq -\eta$. By integration we end up with the inequality $\frac{1}{h(x)}\geq\frac{1}{h(x_0)}-\eta(x-x_0)$, or equivalently (since $h(x)<0$)
	\begin{equation*}
	h(x) \leq \frac{1}{\frac{1}{h(x_0)}-\eta(x-x_0)} \qquad\text{for } x>x_0.
	\end{equation*}
	But this is incompatible with \eqref{gammatoinf31} and therefore $h$ remains positive for all $x>0$.
\end{proof}


\appendix
\section{Fundamental solution to a linear delay equation} \label{sect:linear}

We construct in this section the fundamental solution to the delay equation 
\begin{equation*}
	\vphi'(x) = \vphi(x) - 2\vphi(x/2) \quad\text{for }x\in(0,\infty),\qquad \vphi(0)=0,
\end{equation*}
which appears in the previous sections as the linearization of \eqref{equation2} in the regime $\gamma\to\infty$.

\begin{lemma}[Fundamental solution]\label{lem:linearproblem}
The solution $G(x,\xi)$ solution to the linear delay equation, with a point source in $\xi>0$,
\begin{equation} \label{linearproblem}
\vphi'(x) = \vphi(x) - 2\vphi(x/2) + \delta(x-\xi)\quad\text{for }x\in(0,\infty),\qquad \vphi(0)=0,
\end{equation}
satisfies $G(x,\xi)=0$ for $x<\xi$, and has the representation
\begin{equation}\label{linearG}
G(x,\xi) =
e^x Q(\xi) + \widetilde{G}(x,\xi)\,,
\qquad\text{with }\quad Q(\xi) := e^{-\xi} + \sum_{n=1}^\infty \frac{(-1)^n2^{2n}e^{-2^n\xi}}{\prod_{j=1}^n (2^j-1)}\,,
\end{equation}
for $x>\xi$. The function $Q$ satisfies the bounds, for every $\xi>0$ and $\xi_2>\xi_1>0$,
\begin{equation} \label{linearbounds1}
|Q(\xi)| \leq C_0e^{-\xi}\,,
\qquad
|e^{\xi_1}Q(\xi_1)-e^{\xi_2}Q(\xi_2)| \leq C_0 e^{-\xi_1}|\xi_1-\xi_2|\,.
\end{equation}
Moreover for every $\beta\in(0,\frac12)$, $x>\xi$ and $x>\xi_2>\xi_1>0$ the function $\widetilde{G}$ satisfies the bounds
\begin{equation} \label{linearbounds2}
|\widetilde{G}(x,\xi)| \leq C_0e^{(1-\beta)(x-\xi)}\,,
\qquad 
|\widetilde{G}(x,\xi_1)-\widetilde{G}(x,\xi_2)| \leq C_0|\xi_1-\xi_2|e^{(1-\beta)(x-\xi_1)}\,,
\end{equation}
and finally for every $x_2>x_1>\xi$
\begin{equation} \label{linearbounds3}
|\widetilde{G}(x_1,\xi)-\widetilde{G}(x_2,\xi)| \leq C_0|x_1-x_2|e^{(1-\beta)(x_2-\xi)}\,.
\end{equation}
In all the previous estimates $C_0>0$ is a uniform constant depending only on $\beta$.
\end{lemma}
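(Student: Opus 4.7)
The plan is to construct $G$ explicitly on the dyadic intervals $I_n=(2^n\xi,2^{n+1}\xi)$, identify the coefficient $Q(\xi)$ of the dominant exponential mode $e^x$ as $x\to\infty$, and then estimate the remainder $\tilde G=G-e^xQ$. Integrating \eqref{linearproblem} across $x=\xi$ yields the jump condition $G(\xi^+,\xi)=1$, so $G$ solves the homogeneous equation $G'=G-2G(\cdot/2,\xi)$ on $(\xi,\infty)$. On $I_0=(\xi,2\xi)$ the delay term vanishes and $G(x,\xi)=e^{x-\xi}$; on each successive $I_n$ the equation reduces to a linear first-order ODE whose source $-2G(\cdot/2,\xi)$ is already known from $I_{n-1}$, and can be solved by variation of constants.

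It is convenient to work with $F(x,\xi):=e^{-x}G(x,\xi)$, which satisfies $F\equiv e^{-\xi}$ on $[\xi,2\xi]$ and the integral equation
\[
F(x,\xi)=e^{-\xi}-4\int_\xi^{x/2}e^{-z}F(z,\xi)\,dz \qquad\text{for }x\geq 2\xi.
\]
An induction over the dyadic intervals shows that the constant-in-$x$ part of $F$ on $I_n$ equals $\sum_{k=0}^n(-1)^k a_k e^{-2^k\xi}$, with the recursion $a_{n+1}=4a_n/(2^{n+1}-1)$ matching the formula $a_n=2^{2n}/\prod_{j=1}^n(2^j-1)$. The same induction, combined with continuous dependence of $F$ on $\xi$ down to $\xi=0$, yields the uniform estimate $|F(x,\xi)|\leq Ke^{-\xi}$ with $K$ independent of $x>\xi$ and $\xi>0$, the convergence of $\sum_n a_n$ (from $a_{n+1}/a_n\to 0$) being essential. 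This bound makes $\int_\xi^\infty e^{-z}F(z,\xi)\,dz$ absolutely convergent, so $Q(\xi):=\lim_{x\to\infty}F(x,\xi)$ exists and equals the series in \eqref{linearG}.

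Factoring $e^{-\xi}$ out of the series immediately gives $|Q(\xi)|\leq C_0e^{-\xi}$. Writing $\tilde G(x,\xi)=e^x\bigl(F(x,\xi)-Q(\xi)\bigr)$ and using $F(x,\xi)-Q(\xi)=4\int_{x/2}^\infty e^{-z}F(z,\xi)\,dz$, the uniform bound on $F$ gives $|\tilde G(x,\xi)|\leq 4Ke^{x/2-\xi}$ for $x\geq 2\xi$; for any $\beta\in(0,1/2)$ the inequality $x/2-\xi\leq(1-\beta)(x-\xi)$ (equivalent to $(1/2-\beta)x+\beta\xi\geq 0$) yields the first estimate in \eqref{linearbounds2}. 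For $x\in(\xi,2\xi)$ one uses the explicit form $\tilde G=e^{x-\xi}(1-e^\xi Q(\xi))$ together with $|1-e^\xi Q(\xi)|\leq e^{-\xi}\sum_{n\geq 1}a_n$, since every term of $1-e^\xi Q(\xi)=\sum_{n\geq 1}(-1)^{n-1}a_ne^{-(2^n-1)\xi}$ carries a factor at least $e^{-\xi}$. The Lipschitz bounds in \eqref{linearbounds1}--\eqref{linearbounds3} follow by termwise differentiation of the series for $e^\xi Q(\xi)$ (the derivative series converges because $a_n(2^n-1)=4a_{n-1}$) and by differentiating the integral representation of $\tilde G$ in $x$ and $\xi$, using the decay of $F$ and of its derivative, which in turn comes from the differentiated integral equation.

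The main technical obstacle is establishing the uniform bound $|F(x,\xi)|\leq Ke^{-\xi}$ with $K$ independent of $\xi$. A direct Gr\"onwall estimate applied to the integral equation gives only $K\leq 1/(1-4e^{-\xi})$, which degenerates as $\xi\downarrow\ln 4$; closing the bound uniformly for small $\xi$ requires exploiting the finer dyadic bootstrap controlled by the convergence of $\sum_n a_n$, or equivalently continuous dependence of $F(\cdot,\xi)$ as $\xi\to 0^+$ combined with the Gr\"onwall bound for $\xi$ large. This uniform bound is the key ingredient that propagates the correct exponential decay into the estimates on $\tilde G$ and its differences.
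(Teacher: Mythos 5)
Your approach is genuinely different from the paper's. The paper works in Laplace space: it solves the functional equation $(z-1)\Phi(z)=-4\Phi(2z)+e^{-z\xi}$ by factorising the homogeneous solution through the ansatz $\Phi_h(z)=z^\alpha\Psi(z)$ with $2^\alpha=-\tfrac14$, obtains an explicit series for $\Phi(z)$, and inverts by contour integration; the residue at $z=1$ gives $e^xQ(\xi)$, and the shifted vertical contour at $\Re z=\widetilde L\in(\tfrac12,1)$ gives $\widetilde G$, whose bounds follow by straightforward majorisation of the inversion integral. You instead use a real-variable method of steps: propagate $F=e^{-x}G$ across the dyadic intervals $I_n=(2^n\xi,2^{n+1}\xi)$, identify the constant-in-$x$ part on $I_n$ with the $n$-th partial sum of the alternating series for $Q$, and reduce the remainder estimate to the integral identity $F(x,\xi)-Q(\xi)=4\int_{x/2}^\infty e^{-z}F(z,\xi)\,dz$. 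The recursion for the $a_n$'s, the jump condition, the split at $x=2\xi$, and the derivation of the first bound in \eqref{linearbounds2} from the uniform bound on $F$ are all correct.

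The gap, which you flag yourself, is the uniform bound $|F(x,\xi)|\le Ke^{-\xi}$, and I believe it is genuine rather than a technicality. On $I_n$ one has $F(x,\xi)=c_0^{(n)}+\sum_{j=1}^n c_j^{(n)}e^{-(1-2^{-j})x}$ with
\[
c_j^{(n)}=\frac{4^j}{\prod_{i=0}^{j-1}(2-2^{-i})}\,c_0^{(n-j)}
=\frac{4^j\,2^{j(j-1)/2}}{\prod_{k=1}^j(2^k-1)}\,c_0^{(n-j)}
\asymp 2^j\,c_0^{(n-j)}\,,
\]
all of one sign along $j$. A term-by-term estimate of the sum on $I_n$, using only $x\ge 2^n\xi$ and $|c_0^{(m)}|\le M$, yields a bound of order $\sum_{j=1}^n 2^j e^{-(1-2^{-j})2^n\xi}\sim 2^{n+1}e^{-2^n\xi}\sim 1/\xi$ when $2^n\xi\sim 1$, which diverges as $\xi\to0^+$. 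So the needed uniformity must come from cancellations among the $c_j^{(n)}e^{-(1-2^{-j})x}$, and neither of your two suggested patches reaches them: the Gr\"onwall bootstrap on the integral equation fails below $\xi=\ln 4$, and ``continuous dependence as $\xi\to0^+$'' gives pointwise information in $x$ but not uniformity over the unbounded range $x>\xi$ (as $\xi\to0$ the number of dyadic intervals crossed by any fixed window in $x$ diverges). The convergence of $\sum a_n$ controls $c_0^{(n)}$ only; the $c_j^{(n)}$ for $j\ge 1$ carry the extra factor $2^{j(j-1)/2}$ that kills the super-exponential decay of $a_j$. The paper's complex-analytic route avoids this bookkeeping entirely because the contour integral defining $\widetilde G$ is majorised directly by an absolutely convergent series of rational integrals, with no need to track cancellations; this is what the Laplace transform buys you here. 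A secondary gap: the Lipschitz bounds in \eqref{linearbounds1}--\eqref{linearbounds3} are only sketched, and your plan to obtain them by differentiating the integral equation in $\xi$ and $x$ presupposes the same type of uniform decay for $\partial_\xi F$, thus inheriting the same open issue.
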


\begin{proof}
In order to prove the lemma, we will solve \eqref{linearproblem} explicitly via Laplace transform; in a second step we will obtain the representation \eqref{linearG} for $G$ by using the inverse Laplace transform and contour integration. Finally we will prove the estimates in the statement.

\medskip\noindent
\textit{Step 1.}
By taking the Laplace transform $\Phi(z):=\int_0^\infty \vphi(x)e^{-zx}\de x$ the equation \eqref{linearproblem} becomes
\begin{equation} \label{linear0}
(z-1)\Phi(z) = -4\Phi(2z) +e^{-z\xi}\,.
\end{equation}
We first construct the homogeneous solution $\Phi_h$ to \eqref{linear0} without the source term, that is, $(z-1)\Phi_h(z) = -4\Phi_h(2z)$.
By the rescaling $\Phi_h(z)=z^\alpha\Psi(z)$ with $\alpha=\frac{i\pi}{\ln2}-2$ (that is, $2^\alpha=-\frac14$), we reduce the problem to the simpler equation
\begin{equation} \label{linear1}
\Psi(2z)=(z-1)\Psi(z)\,,
\end{equation}
whose solution can be constructed as a power series $\Psi(z)=\sum_{n=-\infty}^\infty a_nz^n$. Indeed, the coefficients obey the recurrence relation $a_{n-1}=(2^n+1)a_n$, which gives up to a multiplicative constant (we choose $a_0=1$)
\begin{equation*}
\Psi(z) = 1+ \sum_{m=1}^\infty \frac{z^m}{\prod_{k=1}^m(2^k+1)} + \sum_{m=1}^\infty \Bigl( \prod_{k=0}^{m-1}(2^{-k}+1)\Bigr) z^{-m}\,.
\end{equation*}
The series is convergent for $|z|>1$. Notice also that $\Psi(z)=\frac{\Psi(2z)}{z-1}$ and hence $\Psi$ has a simple pole at $z=1$, since $\Psi$ is analytic in a neighborhood of $z=2$; it also follows that all the points $z=2^{-n}$, $n\in\N$, are poles of $\Psi(z)$.

We obtain the homogeneous solution $\Phi_h$ to \eqref{linear0} by multiplying $\Psi$ by $z^\alpha$.
We can now write the full solution to the problem \eqref{linear0}, including the source $e^{-z\xi}$, in terms of $\Phi_h$: indeed, by setting $\Phi(z)=\Phi_h(z)P(z)=z^\alpha\Psi(z)P(z)$ and plugging this ansatz into the equation \eqref{linear0}, we find (recall that $2^\alpha=-\frac14$ by the choice of $\alpha$)
\begin{equation*}
(z-1)z^\alpha\Psi(z)P(z) = z^\alpha\Psi(2z)P(2z) + e^{-z\xi}\,,
\end{equation*}
or equivalently, using \eqref{linear1},
\begin{equation*}
P(z) = P(2z) + \frac{e^{-z\xi}}{z^\alpha(z-1)\Psi(z)}\,.
\end{equation*}
By developing this relation and observing that $P$ tends to zero as $|z|\to\infty$ we find
\begin{equation*}
P(z) = \sum_{n=0}^\infty \frac{e^{-2^nz\xi}}{(2^nz-1)(2^nz)^\alpha\Psi(2^nz)}\,.
\end{equation*}
In turn, using the recurrence relation $\Psi(2^nz)=\prod_{k=0}^{n-1}(2^kz-1)\Psi(z)$, we obtain the following representation for the solution $\Phi$ to \eqref{linear0}:
\begin{equation} \label{linear1b}
\begin{split}
\Phi(z) &= z^\alpha\Psi(z)P(z)
= \Psi(z) \sum_{n=0}^\infty \frac{e^{-2^nz\xi}}{(2^nz-1)2^{n\alpha}\Psi(2^nz)}  \\
& = \sum_{n=0}^\infty \frac{2^{-n\alpha}e^{-2^nz\xi}}{\prod_{k=0}^{n}(2^kz-1)}
= \sum_{n=0}^\infty \frac{(-1)^n2^{2n}e^{-2^nz\xi}}{\prod_{k=0}^{n}(2^kz-1)}\,.
\end{split}
\end{equation}

\medskip\noindent
\textit{Step 2.}
We now use the inversion formula for the Laplace transform to obtain an expression for the solution to \eqref{linearproblem}. Observe that the function $\Phi$ is well-defined for $\Re(z)>1$ and the series in \eqref{linear1b} is uniformly convergent in $\{\Re(z)\geq1+\e\}$ for every $\e>0$, therefore we have
\begin{equation} \label{linear2}
G(x,\xi) = \frac{1}{2\pi i}\int_{L-i\infty}^{L+i\infty} e^{zx} \Phi(z)\de z
= \frac{1}{2\pi i}\sum_{n=0}^\infty \int_{L-i\infty}^{L+i\infty} \frac{(-1)^n2^{2n}e^{(x-2^n\xi)z}}{\prod_{k=0}^{n}(2^kz-1)} \de z\,,
\end{equation}
where the integral is on the vertical line $\{\Re(z)=L\}$, and $L$ is any real number larger than 1 (the integral here has to be interpreted in the sense of principal values, that is as $\lim_{R\to\infty}\int_{L-iR}^{L+iR}e^{xz}\Phi(z)\de z$, where the limit is in the $L^2$-sense). It is a standard exercise using contour integration on the right of the vertical line to show that $G(x,\xi)=0$ for every $x<\xi$.

In order to obtain the representation \eqref{linearG} for $x>\xi$, we ``move'' the vertical line from the position $L>1$ to a new position $\frac{1}{2}<\widetilde{L}<1$ by means of contour integration; more precisely, we integrate along the curve $\Gamma_R$ in the complex plane as in Figure~\ref{fig:contour}, where $R>0$ will be sent to infinity. The region enclosed by the curve contains only one simple pole at the point $z=1$. 
\begin{figure}
	\centering
	\includegraphics[scale=1]{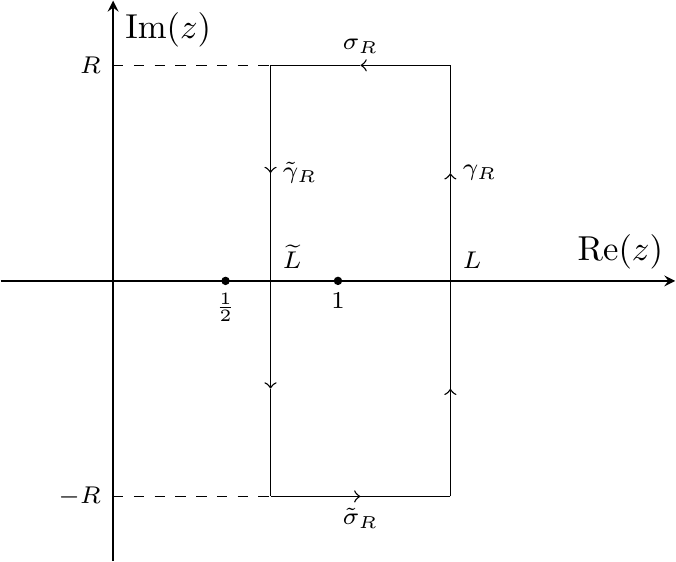}
	\caption{The curve $\Gamma_R$ used for contour integration.}
	\label{fig:contour}
\end{figure}
Notice that the integrals on the horizontal segments $\sigma_R$ and $\tilde{\sigma}_R$ vanish as $R\to\infty$: indeed (exchanging the sum and the integral, since the series is uniformly convergent on $\sigma_R$)
\begin{align*}
\bigg|\frac{1}{2\pi i} \int_{\sigma_R} \sum_{n=0}^\infty \frac{(-1)^n2^{2n}e^{(x-2^n\xi)z}}{\prod_{k=0}^n(2^kz-1)}\de z\bigg|
&=\bigg|\frac{1}{2\pi i} \sum_{n=0}^\infty \frac{(-1)^n2^{2n}}{\prod_{k=0}^n2^k}\int_{L}^{\widetilde{L}} \frac{e^{(x-2^n\xi)(t+iR)}}{\prod_{k=0}^n(t+iR-2^{-k})}\de t\bigg| \\
& \leq \frac{1}{2\pi}\sum_{n=0}^\infty \frac{2^{2n}}{2^{\frac{n(n+1)}{2}}} \int_{\widetilde{L}}^{L} \frac{e^{(x-\xi)t}}{R^{n+1}}\de t\to 0 \quad\text{as }R\to\infty\,,
\end{align*}
and similarly for the integral on $\tilde{\sigma}_R$. We therefore find using Cauchy's residual theorem
\begin{align*}
G(x,\xi) &= \frac{1}{2\pi i} \sum_{n=0}^\infty \int_{L-i\infty}^{L+i\infty} \frac{(-1)^n2^{2n}e^{(x-2^n\xi)z}}{\prod_{k=0}^{n}2^k \prod_{k=0}^n(z-2^{-k})}\de z \\
& = \sum_{n=0}^\infty \frac{(-1)^n2^{2n}}{\prod_{k=0}^{n}2^k} \, \Res\biggl(\frac{e^{(x-2^n\xi)z}}{\prod_{j=0}^n(z-2^{-j})};z=1\biggr) + \widetilde{G}(x,\xi) \\
& = e^{x-\xi} + \sum_{n=1}^\infty \frac{(-1)^n2^{2n}}{\prod_{j=1}^{n}2^j} \frac{e^{(x-2^n\xi)}}{\prod_{j=1}^n(1-2^{-j})} + \widetilde{G}(x,\xi) \\
& = Q(\xi) e^x + \widetilde{G}(x,\xi)\,,
\end{align*}
where the function $Q$ is as in the statement and the remainder $\widetilde{G}(x,\xi)$ is given by
\begin{equation} \label{linear3}
\widetilde{G}(x,\xi) = \frac{1}{2\pi i}\sum_{n=0}^\infty \int_{\widetilde{L}-i\infty}^{\widetilde{L}+i\infty} \frac{(-1)^n2^{2n} e^{(x-2^n\xi)z}}{\prod_{j=0}^n 2^j \prod_{j=0}^n(z-2^{-j})}\de z\,.
\end{equation}
Therefore the function $G(x,\xi)$ defined in \eqref{linearG} is the sought solution to \eqref{linearproblem}. The bounds \eqref{linearbounds1} for $Q$ follow directly by its explicit expression; in particular, for the second one we have
\begin{align*}
|e^{\xi_1}Q(\xi_1)-e^{\xi_2}Q(\xi_2)|
& = \sum_{n=1}^\infty\frac{2^{2n}}{\prod_{j=1}^n(2^j-1)} \big| e^{(1-2^n)\xi_1}-e^{(1-2^n)\xi_2}\big| \\
& \leq e^{-\xi_1}|\xi_2-\xi_1| \sum_{n=1}^\infty\frac{2^{2n}(2^n-1)}{\prod_{j=1}^n(2^j-1)} \leq C_0e^{-\xi_1}|\xi_2-\xi_1| \,,
\end{align*}
since the series is convergent.

\medskip\noindent
\textit{Step 3.}
We now turn to the proof of the estimates \eqref{linearbounds2}--\eqref{linearbounds3} involving $\widetilde{G}$. First notice that for $x>\xi$ the sum \eqref{linear3} defining $\widetilde{G}$ actually starts from $n=1$: indeed, for the term with $n=0$ we have
\begin{equation*}
\frac{1}{2\pi i} \int_{\widetilde{L}-i\infty}^{\widetilde{L}+i\infty} \frac{e^{(x-\xi)z}}{z-1}\de z=0\,,
\end{equation*}
as follows by a standard computation using contour integration on a half-circle on the left of the vertical line (observe that there are no poles inside the region of integration), and sending its radius to infinity. Therefore
\begin{equation} \label{linear5}
\widetilde{G}(x,\xi) = \sum_{n=1}^\infty \frac{(-1)^n2^{2n}}{2^{\frac{n(n+1)}{2}}}\frac{1}{2\pi i} \int_{\widetilde{L}-i\infty}^{\widetilde{L}+i\infty} \frac{e^{(x-2^n\xi)z}}{\prod_{j=0}^n(z-2^{-j})}\de z\,.
\end{equation}

By using the expression \eqref{linear5} it is straightforward to obtain the first bound in \eqref{linearbounds2}, with $\beta=1-\widetilde{L}$:
\begin{align*}
|\widetilde{G}(x,\xi)|
& \leq \frac{e^{\widetilde{L}(x-\xi)}}{2\pi} \sum_{n=1}^\infty \frac{2^{2n}}{2^{\frac{n(n+1)}{2}}} \int_{-\infty}^{\infty} \frac{\de t}{\prod_{j=0}^n|\widetilde{L}+it-2^{-j}|} \\
& \leq \frac{e^{\widetilde{L}(x-\xi)}}{2\pi} \sum_{n=1}^\infty \frac{2^{2n}}{2^{\frac{n(n+1)}{2}}} \int_{-\infty}^{\infty} \frac{\de t}{\bigl((\widetilde{L}-1)^2 + t^2\bigr)^\frac12 \bigl((\widetilde{L}-\frac12)^2 + t^2\bigr)^{\frac{n}{2}}}
\leq C_0e^{\widetilde{L}(x-\xi)}\,.
\end{align*}

For the second bound in \eqref{linearbounds2}, we have to isolate the term with $n=1$ in the expression \eqref{linear5} of $\widetilde{G}$. By contour integration on a large half-circle on the right or on the left of the vertical line $\{\Re (z) = \widetilde{L}\}$ (depending on whether $x<2\xi$ or $x>2\xi$), one can show that
\begin{equation} \label{linear6}
-\frac{1}{2\pi i}\int_{\widetilde{L}-i\infty}^{\widetilde{L}+i\infty} \frac{2e^{(x-2\xi)z}}{(z-1)(z-\frac12)}\de z =
\begin{cases}
4e^{x-2\xi} &\text{if }\xi<x<2\xi,\\
4e^{\frac12 x-\xi} &\text{if }x>2\xi.\\
\end{cases}
\end{equation}
By using this formula it is straightforward to check that for every $x>\xi_2>\xi_1>0$
\begin{equation*}
\bigg| \frac{1}{2\pi i}\int_{\widetilde{L}-i\infty}^{\widetilde{L}+i\infty} \frac{2e^{(x-2\xi_1)z}}{(z-1)(z-\frac12)}\de z - \frac{1}{2\pi i}\int_{\widetilde{L}-i\infty}^{\widetilde{L}+i\infty} \frac{2e^{(x-2\xi_2)z}}{(z-1)(z-\frac12)}\de z\bigg| \leq 8|\xi_1-\xi_2| e^{\widetilde{L}(x-\xi_1)}\,.
\end{equation*}
For all the other terms in the series \eqref{linear5} we have instead
\begin{align*}
\bigg| \sum_{n=2}^\infty \frac{(-1)^n2^{2n}}{2^{\frac{n(n+1)}{2}}} & \frac{1}{2\pi i} \int_{\widetilde{L}-i\infty}^{\widetilde{L}+i\infty} \frac{e^{zx}\bigl(e^{-2^n\xi_1z}-e^{-2^n\xi_2z}\bigr)}{\prod_{j=0}^n(z-2^{-j})}\de z \bigg| \\
& \leq \frac{e^{\widetilde{L}x}}{2\pi} \sum_{n=2}^\infty \frac{2^{2n}}{2^{\frac{n(n+1)}{2}}} \int_{-\infty}^{\infty} \frac{e^{-2^n\widetilde{L}\xi_1}|e^{2^n(\xi_1-\xi_2)(\widetilde{L}+it)}-1|}{\prod_{j=0}^n|\widetilde{L}+it-2^{-j}|} \de t\\
& \leq \frac{e^{\widetilde{L}(x-\xi_1)}|\xi_1-\xi_2|}{2\pi} \sum_{n=2}^\infty \frac{2^{3n}}{2^{\frac{n(n+1)}{2}}} \int_{-\infty}^{\infty} \frac{\bigl(\widetilde{L}^2+t^2\bigr)^\frac12}{\bigl((\widetilde{L}-1)^2 + t^2\bigr)^\frac12 \bigl((\widetilde{L}-\frac12)^2 + t^2\bigr)^{\frac{n}{2}}} \de t \\
& \leq \frac{e^{\widetilde{L}(x-\xi_1)}|\xi_1-\xi_2|}{2\pi}  \sum_{n=2}^\infty \frac{2^{3n}}{2^{\frac{n(n+1)}{2}}} \int_{-\infty}^{\infty} \frac{2\de t}{\bigl((\widetilde{L}-\frac12)^2 + t^2\bigr)^{\frac{n}{2}}}
\leq C_0|\xi_1-\xi_2|e^{\widetilde{L}(x-\xi_1)}
\end{align*}
for a constant $C_0$ depending only on the choice of $\widetilde{L}$. This completes the proof of the second inequality in \eqref{linearbounds2}. Finally, the bound \eqref{linearbounds3} follows by an analogous argument.
\end{proof}


\bigskip
\bigskip
\noindent
{\bf Acknowledgments.}
The authors acknowledge support through the CRC 1060 \textit{The mathematics of emergent effects} at the University of Bonn that is funded through the German Science Foundation (DFG).

\bibliographystyle{plain}
\bibliography{bibliography}

\end{document}